\pgfplotsset{compat=newest}
\newtheorem{theorem}{Theorem}[section]
\newtheorem{lemma}[theorem]{Lemma}
\newtheorem{assumption}{Assumption}
\newtheorem{remark}{Remark}
\numberwithin{equation}{section} 
\crefname{section}{Section}{Sections}
\crefname{subsection}{Section}{Sections}
\crefname{subsubsection}{Section}{Sections}
\Crefname{section}{Section}{Sections}
\Crefname{subsection}{Section}{Sections}
\Crefname{subsubsection}{Section}{Sections}
\crefname{example}{Example}{Examples}
\newcommand{\jump}[1]{\ensuremath{[#1]} }
\newcommand{\avg}[1]{\ensuremath{\left\{#1\right\}}}
\newcommand{\bm}[1]{\boldsymbol{#1}} 
\newcommand{\enorm}[1]{{\left\vert\kern-0.25ex\left\vert\kern-0.25ex\left\vert #1 
    \right\vert\kern-0.25ex\right\vert\kern-0.25ex\right\vert}}
 \newcommand{\vertiii}[1]{{\left\vert\kern-0.25ex\left\vert\kern-0.25ex\left\vert #1 
    \right\vert\kern-0.25ex\right\vert\kern-0.25ex\right\vert}}%
\newcommand{\DG}{\mathrm{DG}}
\newcommand{\bfsigma}{\boldsymbol{\sigma}}
\newcommand{\bfn}{\bm n}
\newcommand{\bfw}{\bm w}
\newcommand{\Bk}{\color{black}}
\title[DG on networks]{Analysis of a Discontinuous Galerkin Method for Diffusion Problems on Intersecting Domains}
\author{Miroslav Kuchta$^1$, Rami Masri$^2$, and  Beatrice Riviere$^3$}
\email{miroslav@simula.no, rami\_masri@brown.edu, riviere@rice.edu}
\address{$^1$Department of Numerical Analysis and Scientific Computing, Simula Research Laboratory, Oslo, 0164 Norway. MK acknowledges support from the Research Council of Norway grant 303326.}
\address{$^2$Division of Applied Mathematics, Brown University, Providence, RI 02906, USA.
  }
\address{$^3$Department of Computational Applied Mathematics and Operations Research, Rice University, Houston, TX 77005, USA. The work of BR is partially supported by the National Science Foundation grant DMS-2513092.
}
\date{ \today }
\begin{document}

\maketitle

\begin{abstract}
The interior penalty discontinuous Galerkin method is applied to solve elliptic equations on either networks of segments or networks of planar surfaces, with arbitrary but fixed number of bifurcations. Stability is obtained by proving a discrete Poincar\'e's inequality on the hypergraphs.  Convergence of the scheme is proved 
for $H^r$ regularity solution with $1 < r \leq 2$. In the low regularity case  ($r \leq 3/2$), a weak consistency result is obtained via generalized lifting operators for Sobolev spaces defined on hypergraphs. Numerical experiments confirm the theoretical results.\\
\smallskip
\noindent {\sc{Keywords.}}   hypergraphs; enriching map; Kirchoff condition; Poincar\'e's inequality; low regularity; error estimates.\\
\smallskip
\noindent {\sc{MSCcodes.}}   65M60, 65N30, 53Z99, 57N99
\end{abstract}

\newtheorem{thm}{Theorem}[section]

\section{Introduction}\label{sec:intro}

There is a need for the numerical analysis of elliptic partial differential equations (PDEs) in non-standard computational domains, namely, networks of manifolds of lower dimensions.  This type of problem arises in many application domains, for instance, from geosciences, where the network of planar surfaces may represent  natural fractures in the subsurface (see \cite{berre2019flow} and references therein).  Another application is in
biomedicine, where the network of one-dimensional lines is a representation of the vasculature of an organ: indeed, each blood vessel is reduced to its centerline \cite{laurinozunino2019,amare20251d,masri2024modelling,causemann2025silico}. One main advantage of using topological model reduction to obtain PDEs on manifolds of lower dimension is the gain in computational efficiency.  

In this work, we formulate and analyze a discontinuous Galerkin method for solving the diffusion problem in two types of networks: (i) a network made of connected one-dimensional lines and (ii) a network made of connected two-dimensional planar polygonal surfaces. In either case, the computational domain is a hypergraph with an arbitrary but fixed number of graph edges and graph nodes including bifurcation nodes.
Standard interior penalty discontinuous Galerkin  (DG) methods assume that an
  interior facet is shared by exactly two elements. However, hypergraphs lead to meshes where this assumption is no longer valid.
  To discretely enforce the conditions at the bifurcation graph nodes, a special flux is constructed that generalizes the discontinuous Galerkin fluxes usually used on standard computational domains. With a generalized definition of jumps and averages at the bifurcation nodes, the interface conditions are seamlessly written in the DG form. We introduce an enriching map that is the key ingredient in the proof of the discrete Poincar\'e's inequality on hypergraphs, used to obtain the stability of the numerical solutions. 
  
  Convergence of the scheme is established for the case of a smooth solution and for the case of a low regularity solution. The numerical analysis for the low regularity solution is non-standard as Galerkin orthogonality is no longer valid. The proof utilizes and extends to hypergraphs the functional analysis tools introduced in \cite{ern2021quasi} for classical domains. They include the generalization of lifting operators and mollification operators to networks of planar surfaces.  By defining a weak notion of the normal traces, we extend the DG bilinear form so that the first argument belongs to a space on the hypergraph of low regularity functions. We then obtain a weak consistency result and we derive a priori error bounds.  

As mentioned above, this work is relevant to the study of flows in fractured porous media for the geosciences applications. In that case, flow inside the network of fractures is coupled with the flow in the surrounding rock matrix. The literature on mixed-dimensional models of Darcy flow in fractured porous media is vast, specially in the simpler case of one single fracture.  Over the last decade, there has been an increase on the numerical investigation of the more challenging case of networks of fractures embedded into a porous medium, see for instance the
works \cite{sandve2012efficient, antonietti2016mimetic, brenner2017gradient,boon2018robust,nordbotten2019unified,antonietti2019discontinuous,antonietti2022polytopic} and the references therein. 
Our work can also be placed in the broader context of numerical analysis of surface PDEs. The literature on the Laplace-Beltrami PDE is significant: for instance, finite element method \cite{Dziuk2006,guzman2018analysis,bonito2020finite}, discontinuous Galerkin methods \cite{dedner2013analysis,CockburnDemlow2016,burman2017cut} and other discretization methods have been studied.  On one hand, because the domains are one dimensional lines or two dimensional planar polygons, the use of the Laplace-Beltrami operator is not needed here and this facilitates the numerical analysis of the PDEs. On the other hand, the fact that the lines or surfaces intersect and form a hypergraph introduces challenges in the numerical analysis. Additional constraints on the bifurcation nodes are required and special treatment of the interface conditions is needed to ensure well-posedness and convergence of the numerical methods. 

The literature on theoretical numerical analysis of PDEs on hypergraphs remains scarce. In \cite{rupp2022partial,Knobloch2025}, the authors analyze a mixed hybridizable discontinuous Galerkin method for elliptic equations on hypergraphs (network of lines or network of planar surfaces). In \cite{rupp2022partial},  the Kirchoff interface condition at the bifurcation nodes of the hypergraph is relaxed by the addition of  a concentrated source.  Another DG method is formulated in \cite{masri2024discontinuous} for networks of 1D lines: in that work, hybridization is occurring only at the bifurcation nodes and only existence and uniqueness of the discrete solution is shown. The work \cite{hansbo2017nitsche} presents a
continuous finite element method to solve the diffusion problem on composite surfaces. The authors propose a Nitsche formulation of the Kirchoff law at the interfaces of the  composite surfaces that can be viewed (after some algebraic manipulation) as a generalization of our approach for handling the Kirchoff condition. With Galerkin orthogonality, 
error bounds are derived for smooth enough solutions, namely functions in $H^{1+s}$ on each surface for $1/2 < s \leq 1$. Conservation laws on networks of 1D lines are numerically discretized by a finite volume method in \cite{fjordholm2022well}; convergence of the numerical solution to a unique entropy solution is proved. PolyDG methods are analyzed for coupled flows in \cite{antonietti2022polytopic}
under the assumption of smooth enough solutions, and for the simpler case of a hypergraph with only one bifurcation node.


The outline of the paper is as follows. \Cref{sec:continuous} defines the PDEs, the interface conditions and boundary conditions. The numerical schemes are defined in \Cref{sec:schemes}.  Existence and uniqueness of the solution are proved in \Cref{sec:exist}. For the convergence of the method, the case of smooth solution and the case of low regularity solution are treated separately in \Cref{sec:conv}. Numerical examples are presented in \Cref{sec:numer}. Conclusions follow.

\section{Continuous problem}
\label{sec:continuous}

We consider two types of networks or hypergraphs (see Fig.~\ref{fig:mesh1d}): 
(i) an edge-network made of one-dimensional edges $\Omega_i$ connected by a set $\Gamma$ of bifurcation nodes (i.e. points) and (ii) a plane-network made of planar polygonal surfaces $\Omega_i$ connected by a set $\Gamma$ of bifurcation nodes (i.e. segments).

\subsection{Notation}

\subsubsection*{Edge-network}

Let $\mathcal{G}$  be a directed graph made of edges $\Omega_i\subset\mathbb{R}$  for $1\leq i\leq N$. The graph $\mathcal{G}$ is embedded in $\mathbb{R}^2$ or $\mathbb{R}^3$.
For each $i$, denote by $\gamma_\mathrm{in}^i$ and $\gamma_\mathrm{out}^i$ the two vertices of $\Omega_i$ and define the values $n_i(\gamma_\mathrm{in}^i) = -1$ and $n_i(\gamma_\mathrm{out}^i) = +1$.
Let $\Gamma$ be the set of bifurcation points; this means that each vertex $\gamma\in\Gamma$ belongs to at least three distinct graph edges.

\subsubsection*{Plane-network}

Let $\mathcal{G}$ be a hypergraph made of planar polygonal surfaces $\Omega_i\subset\mathbb{R}^2$ for $1\leq i\leq N$. The hypergraph $\mathcal{G}$ is embedded in $\mathbb{R}^3$.
Let $\Gamma$ be the set of bifurcation segments (also called interfaces in this work) that result from the intersection of the planar surfaces; this means that each segment $\gamma\in\Gamma$ belongs to at least three distinct planar surfaces. Since each domain $\Omega_i$ is embedded into a two-dimensional plane, we can uniquely define the unit normal outward vector $\bfn_i$ to $\Omega_i$.

\begin{figure}
  \centering
  \includegraphics[height=0.32\textwidth]{./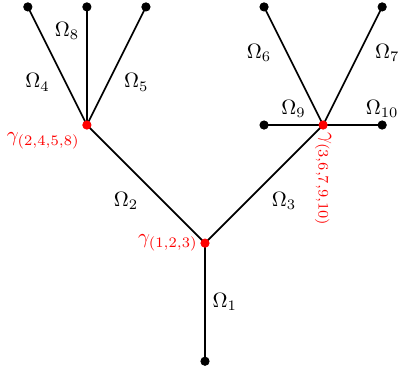}
   \includegraphics[height=0.32\textwidth]{./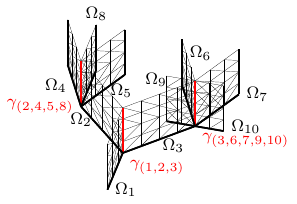}  
  \caption{
    Two types of hypergraphs: An edge-network (left) and a plane-network (right). Bifurcation nodes
    are shown in red: they are points for edge-networks and segments for plane-networks.
  }
  \label{fig:mesh1d}
\end{figure}

\subsection{Model problem}

For each $\gamma\in\Gamma$, we associate the set $I_\gamma$ of indices $i$
such that $\gamma$ belongs to $\overline{\Omega_i}$. In other words, we have
\[
\gamma = \bigcap_{i\in I_\gamma}\overline{\Omega_{i}}.
\]
We also associate to each bifurcation $\gamma$ the set $D_\gamma$ of ordered pairs of indices corresponding to the domains $\overline{\Omega_i}$ that contain $\gamma$:
\[
D_\gamma = \{ (i,j): i < j, \quad i,j \in I_\gamma \}.
\]
We assume that the cardinality of $I_\gamma$ (resp.  of $D_\gamma$) is uniformly bounded.
We denote by $\partial\mathcal{G}$ the union of the boundaries of the domains $\Omega_i$, that do not contain the bifurcations. 
\[
\partial\mathcal{G} = \bigcup_{i=1}^N \partial\Omega_i\setminus \Gamma.
\]
For the edge-network, $\partial\mathcal{G}$ is the set of nodes of the graph that are connected to one edge only. For the plane-network, $\partial\mathcal{G}$ is the union of the line segments that belong each to the boundary of only one planar surface.

Each domain $\Omega_i$ (edge or planar polygonal surface) is associated with a separate coordinate system; 
 this uniquely defines the gradient (or higher derivatives) of a function with respect to that coordinate system.  Nevertheless, for readability, we will use the same notation for the partial derivatives for all the domains $\Omega_i$.  

 We solve an elliptic problem on $\mathcal{G}$. Let $u$
be the exact solution and denote by $u_i = u|_{\Omega_i}$ the restriction of $u$ to $\Omega_i$. The solution $u$ satisfies
\begin{equation}\label{eq:pdei2d}
-\nabla \cdot \bfsigma_i(u_i) = f_i, \quad 
\bfsigma_i(u_i) = \kappa_i \nabla u_i, \quad
\mbox{in}\quad \Omega_i, \quad 1\leq i \leq N,
\end{equation}
with homogeneous Dirichlet boundary condition:
\begin{equation}\label{eq:bc}
u = 0, \quad \mbox{on}\quad\partial\mathcal{G}.
\end{equation}

The functions $f$ and  $\kappa>0$ are given and  we denote $f_i = f|_{\Omega_i}$  and $\kappa_i = \kappa|_{\Omega_i}$.
We assume that $f_i \in L^2(\Omega_i)$ and
$\kappa_i \in L^\infty(\Omega_i)$ for all $1\leq i\leq N$.

To complete the problem posed on $\mathcal{G}$, we impose the following coupling conditions at the bifurcation set $\Gamma$:
\begin{align}\label{eq:coupled_interface2d}
\forall \gamma\in\Gamma,\quad
\sum_{i\in I_\gamma} \bfsigma_i(u_i)|_\gamma\cdot \bfn_i = 0, \\
\forall \gamma\in\Gamma,\quad
\forall i, i' \in I_\gamma, \quad u_i|_\gamma = u_{i'}|_\gamma.\label{eq:coupled_interfaceB}
\end{align}
Condition~\eqref{eq:coupled_interface2d} is known as  Kirchoff's law. For the edge-network, the condition reduces to
\[
\forall \gamma\in\Gamma,\quad
\sum_{i\in I_\gamma} \sigma_i(u_i)|_\gamma\, n_i(\gamma) = 0.
\]
Condition~\eqref{eq:coupled_interfaceB} enforces continuity of the solution at the bifurcation nodes.

Problem~\eqref{eq:pdei2d}-\eqref{eq:coupled_interfaceB} has a unique weak solution in the space $H^1_0(\mathcal{G})$ defined by
\[
H_0^1(\mathcal{G}) = \{ v|_{\Omega_i} = v_i \in H^1(\Omega_i), \,
1\leq i\leq N; \quad v_i|_\gamma = v_{i'}|_\gamma, \, \forall i, i'\in I_\gamma, \, \forall \gamma\in \Gamma; \,
v = 0 \,\, \mbox{on} \, \partial\mathcal{G}\}.
\]
Indeed it is easy to see that the variational formulation is: find $u\in H_0^1(\mathcal{G})$ such that
\begin{equation}
\sum_{i=1}^N \int_{\Omega_i} \kappa_i \nabla u_i \cdot \nabla v_i  = \sum_{i=1}^N \int_{\Omega_i} f v_i, \quad\forall v \in H_0^1(\mathcal{G}). \label{eq:weakformulation}
\end{equation}
Well-posedness is obtained with Lax-Milgram's theorem  thanks to  Poincar\'e's inequality \cite{rupp2022partial}: there is a constant $C_P>0$ such that
\begin{equation}
\forall v \in H_0^1(\mathcal{G}), \quad
\sum_{i=1}^N \Vert v\Vert_{L^2(\Omega_i)}^2 \leq C_P^2
\sum_{i=1}^N \Vert \nabla v \Vert_{L^2(\Omega_i)}^2.
\label{eq:Poincarecontinuous}
\end{equation}

\section{Numerical schemes}
\label{sec:schemes}

\subsection{Preliminaries}

The novel formulation of our proposed schemes stems from a rewriting of the Kirchoff laws using a generalized definition of jumps and averages of functions at the bifurcation nodes. For any scalar function $v$ and vector function $\bfw$, the jump of $v$ and average of $\bfw$ across a bifurcation node that is the intersection of two domains $\overline{\Omega_i}$ and $\overline{\Omega_j}$ are defined by:
\begin{equation}\label{eq:jumpavg22d}
\forall (i,j) \in D_\gamma, \quad
  \jump{v}_{(i, j)}=v|_{\Omega_i} - v|_{\Omega_j},
\quad\quad
\avg{\bfw}_{(i, j)} = \bfw|_{\Omega_i}\cdot\bfn_i - \bfw|_{\Omega_j}\cdot\bfn_j.
\end{equation}
In the case of the edge-network, the average reduces to
\[
\avg{v}_{(i, j)} = v|_{\Omega_i}n_i(\gamma) - v|_{\Omega_j}n_j(\gamma).
\]
In the remainder of the paper, we use the notation $v_i = v|_{\Omega_i}$ for any scalar or vector function $v$ defined on $\mathcal{G}$.

The construction of these generalized jumps and averages leads to the following lemma for the exact solution $u$.  In fact, this is valid for any function $u$ that satisfies the Kirchoff's law \eqref{eq:coupled_interface2d}.
\begin{lemma}\label{lem:modjump}
Let $u, v$ be functions such that $u_i, v_i$ belong to $H^1(\Omega_i)$ for all $1\leq i\leq N$. In addition, assume that $u$ is regular enough to satisfy \eqref{eq:coupled_interface2d}. 
Denote by $\mbox{card}(I_\gamma)$ the cardinality of $I_\gamma$. 
We have
\begin{equation}\label{eq:valuek2}
 \forall \gamma\in\Gamma, \quad
 \sum_{i\in I_\gamma} \bfsigma_i(u_i) \cdot\bfn_i \, v_i = \frac{1}{\mbox{card} (I_\gamma)}
\sum_{(i, j)\in D_\gamma}\avg{\bfsigma(u)}_{(i, j)}\jump{v}_{(i, j)}.
\end{equation}
\end{lemma}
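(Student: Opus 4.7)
The plan is to expand the right-hand side of \eqref{eq:valuek2} using the definitions \eqref{eq:jumpavg22d}, rearrange by terms depending on a single index $i$ versus cross terms, use a simple counting argument on the ordered pairs in $D_\gamma$, and finally invoke Kirchoff's law \eqref{eq:coupled_interface2d} to eliminate the remaining cross terms.

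To streamline notation, I would first fix $\gamma \in \Gamma$ and write $\sigma_i = \bfsigma_i(u_i)\cdot\bfn_i$ for $i \in I_\gamma$ (which is well defined for $u_i \in H^1(\Omega_i)$ together with the regularity needed for \eqref{eq:coupled_interface2d}). The definitions give
\[
\avg{\bfsigma(u)}_{(i,j)}\jump{v}_{(i,j)} = (\sigma_i - \sigma_j)(v_i - v_j) = \sigma_i v_i + \sigma_j v_j - \sigma_i v_j - \sigma_j v_i,
\]
so the right-hand side of \eqref{eq:valuek2} is a sum over $(i,j) \in D_\gamma$ of these four terms.

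Next I would count multiplicities. Set $n = \mathrm{card}(I_\gamma)$. Each fixed index $k \in I_\gamma$ appears in exactly $n-1$ ordered pairs of $D_\gamma$ (either as the smaller or the larger index). Therefore,
\[
\sum_{(i,j)\in D_\gamma}(\sigma_i v_i + \sigma_j v_j) = (n-1)\sum_{k \in I_\gamma}\sigma_k v_k.
\]
For the cross terms, since $D_\gamma$ ranges over all unordered pairs $\{i,j\}$ with $i\neq j$, one has
\[
\sum_{(i,j)\in D_\gamma}(\sigma_i v_j + \sigma_j v_i) = \sum_{i \in I_\gamma}\sigma_i \sum_{\substack{k \in I_\gamma \\ k\neq i}} v_k = \Big(\sum_{i \in I_\gamma}\sigma_i\Big)\Big(\sum_{k \in I_\gamma}v_k\Big) - \sum_{i \in I_\gamma}\sigma_i v_i.
\]
Combining these two identities yields
\[
\sum_{(i,j)\in D_\gamma}\avg{\bfsigma(u)}_{(i,j)}\jump{v}_{(i,j)} = n\sum_{i \in I_\gamma}\sigma_i v_i - \Big(\sum_{i \in I_\gamma}\sigma_i\Big)\Big(\sum_{k \in I_\gamma}v_k\Big).
\]

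Finally, I would apply Kirchoff's law \eqref{eq:coupled_interface2d}, which states exactly that $\sum_{i \in I_\gamma}\sigma_i = 0$ at $\gamma$. The second term then vanishes and, after dividing by $n = \mathrm{card}(I_\gamma)$, one recovers the left-hand side. There is no real analytic difficulty in this proof; the only delicate point is the combinatorial bookkeeping for the multiplicity with which each index $i$ appears in the ordered pairs $(i,j) \in D_\gamma$, and the clean cancellation is what makes Kirchoff's law rewritable in jump–average DG form.
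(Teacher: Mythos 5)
Your proof is correct. It reaches the identity by a route that is organized differently from the paper's: you expand the right-hand side, use the polarization-type identity
\[
\sum_{(i,j)\in D_\gamma}(\sigma_i-\sigma_j)(v_i-v_j)
= n\sum_{i\in I_\gamma}\sigma_i v_i-\Bigl(\sum_{i\in I_\gamma}\sigma_i\Bigr)\Bigl(\sum_{i\in I_\gamma}v_i\Bigr),
\qquad n=\mathrm{card}(I_\gamma),
\]
which holds for arbitrary numbers $\sigma_i, v_i$, and only invoke Kirchoff's law at the very end to annihilate the product term. The paper works in the opposite direction: it starts from the left-hand side, substitutes $\bfsigma_i(u_i)\cdot\bfn_i=-\sum_{j\neq i}\bfsigma_j(u_j)\cdot\bfn_j$ into each summand at the outset, and then regroups the resulting double sum over $j<i$ and $j>i$ into the ordered pairs of $D_\gamma$. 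Both arguments are elementary and use exactly the same two ingredients (the definitions \eqref{eq:jumpavg22d} and Kirchoff's law \eqref{eq:coupled_interface2d}), so neither is deeper than the other; but your version has the small advantage of isolating where Kirchoff's law enters, and it produces for free the correction term $-\tfrac{1}{n}\bigl(\sum_{i\in I_\gamma}\bfsigma_i(u_i)\cdot\bfn_i\bigr)\bigl(\sum_{i\in I_\gamma}v_i\bigr)$ that one would need if the Kirchoff balance had a nonzero source $g_\gamma$ (a situation that actually arises in the paper's low-regularity numerical example). Your multiplicity count (each index lies in exactly $n-1$ pairs of $D_\gamma$) is the only bookkeeping step, and it is correct.
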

\begin{proof}
We provide the proof for the plane-network, as the case of the edge-network is handled similarly.
Fix $i\in I_\gamma$ and write
\[
\bfsigma_i(u_i) \cdot \bfn_i \, v_i = \frac{1}{\mbox{card} (I_\gamma)}
\sum_{j\in I_\gamma} \bfsigma_i(u_i) \cdot \bfn_i v_i
= \frac{1}{\mbox{card} (I_\gamma)}\bfsigma_i(u_i)\cdot \bfn_i v_i
+ \frac{1}{\mbox{card} (I_\gamma)}
\sum_{j\in I_\gamma, j\neq i} \bfsigma_i(u_i) \cdot\bfn_i v_i.
\]
With the Kirchoff's law \eqref{eq:coupled_interface2d}, we have
\[
\bfsigma_i(u_i) \cdot \bfn_i = -\sum_{j\in I_\gamma, j\neq i}
\bfsigma_j(u_j) \cdot \bfn_j.
\]
This implies
\[
\bfsigma_i(u_i)\cdot \bfn_i v_i
= \frac{1}{\mbox{card} (I_\gamma)} \sum_{j\in I_\gamma, j\neq i} (\bfsigma_i(u_i) \cdot \bfn_i -\bfsigma_j(u_j)\cdot \bfn_j)  v_i.
\]
We now sum over $i$ to obtain
\begin{align*}
\sum_{i\in I_\gamma} \bfsigma_i(u_i) \cdot\bfn_i v_i
=&\frac{1}{\mbox{card} (I_\gamma)} \sum_{i\in I_\gamma}  \sum_{j\in I_\gamma, j < i} 
(\bfsigma_i(u_i) \cdot \bfn_i-\bfsigma_j(u_j)\cdot \bfn_j)  v_i\\
&+ \frac{1}{\mbox{card} (I_\gamma)} \sum_{i\in I_\gamma} \sum_{j\in I_\gamma, j > i} 
(\bfsigma_i(u_i) \cdot \bfn_i-\bfsigma_j(u_j)\cdot \bfn_j  ) v_i
\\
=  &\frac{1}{\mbox{card} (I_\gamma)} \sum_{i\in I_\gamma} \sum_{j\in I_\gamma, j > i}  (\bfsigma_i(u_i) \cdot \bfn_i -\bfsigma_j(u_j) \cdot \bfn_j) (v_i-v_j),
\end{align*}
which is the desired result.
\end{proof}
In what follows, we use the short-hand notation, for any function $w, v$:
\begin{align} \label{eq:short_hand_notation}
\forall \gamma\in\Gamma, \quad (\avg{\bfsigma(w)} \odot \jump{v})|_\gamma
& := \frac{1}{\mbox{card} (I_\gamma)}
\sum_{(i, j)\in D_\gamma} 
\avg{\bfsigma(w)}_{(i, j)}\jump{v}_{(i, j)}, \\
\forall \gamma\in\Gamma, \quad 
(\jump{w}\odot\jump{v})|_\gamma
& := \sum_{(i, j)\in D_\gamma} \jump{w}_{(i, j)}
\jump{v}_{(i, j)}.
\end{align}

\subsection{DG spaces and Poincar\'{e}'s inequality}

We discretize the domains $\Omega_i$ of the hypergraph $\mathcal{G}$ into shape-regular meshes and define a finite dimensional DG space based on the resulting partition.  In the case of the edge-network, each segment $\Omega_i$ is
partitioned into a set of intervals $K$, that forms a mesh $\mathcal{T}_{h,i}$.  In the case of the plane-network, 
each polygonal domain $\Omega_i$  is partitioned into a set of triangular elements  $K$, that forms a mesh $\mathcal{T}_{h,i}$. Let $\mathcal{T}_h$ be the union of the partitions $\mathcal{T}_{h,i}$. 
The mesh size of the domain $\Omega_i$ is $h_i = \max_{K\in\mathcal{T}_{h,i}} \mbox{diam}(K)$  and
the overall mesh size is $h = \max_{1\leq i\leq N} h_i$. 

In the case of the edge-network, for each bifurcation $\gamma$, 
we denote by  $h_\gamma$ the maximum of the lengths of all edges connecting to $\gamma$.
In the case of the plane-network, the meshes are assumed to match at the bifurcation interfaces between two domains. We denote by $\mathcal{F}_h^\gamma$ the resulting partition (made of intervals) of the interface $\gamma\in\Gamma$. 

The DG space is 
\[
V^p_h=\left\{v\in L^2(\mathcal{T}_h): v|_{K} \in \mathbb{P}_p(K), \quad \forall K \in\mathcal{T}_{h,i}, \, \forall 1\leq i\leq N\right\},
\]
where $\mathbb{P}_p(K)$ is the space of polynomials of degree at most $p$ defined on $K$. 
For each mesh $\mathcal{T}_{h,i}$, the set of interior mesh edges for the plane-network (resp. points for the edge-network) is denoted by $\mathcal{F}_{h,i}^{\mathrm{int}}$.
The set of mesh edges for the plane-network (resp. points for the edge-network) that belong to the boundary $\partial\mathcal{G}$ is denoted by $\mathcal{F}_{h,i}^{\partial\mathcal{G}}$.  

For each domain $\Omega_i$, we define the semi-norm
\begin{align*}
\Vert v \Vert_{\DG,i} =
\left( \sum_{K\in\mathcal{T}_{h,i}} \Vert \nabla v_i \Vert_{L^2(K)}^2
 + \sum_{F\in\mathcal{F}_{h,i}^{\mathrm{int}}}
\frac{\eta_{F,i}}{h_F} \Vert [v_i]\Vert_{L^2(F)}^2
+\sum_{F\in\mathcal{F}_{h,i}^{\partial\mathcal{G}}}
\frac{\eta_{F,i}}{h_F} \Vert v_i \Vert_{L^2(F)}^2
\right)^{1/2},
\end{align*}
and the DG norm on $\Omega$ is defined for
the plane-network by:
\begin{align}
\Vert v \Vert_{\DG} = \left(
\sum_{i=1}^N \Vert v \Vert_{\DG,i}^2 
+\sum_{\gamma\in\Gamma} 
\eta_\gamma
\sum_{F\in\mathcal{F}_h^\gamma}
\frac{1}{h_F}
 \sum_{(i, j)\in D_\gamma}
\Vert \jump{v}_{(i, j)}\Vert_{L^2(F)}^2\right)^{1/2},\label{eq:dgnorm2d}
\end{align}
and for the edge network, by:
\begin{align}
\Vert v \Vert_{\DG} = \left(
\sum_{i=1}^N \Vert v \Vert_{\DG,i}^2 
+\sum_{\gamma\in\Gamma} 
\frac{\eta_\gamma}{h_\gamma}
 \sum_{(i, j)\in D_\gamma}
\jump{v}_{(i, j)}^2\right)^{1/2}.\label{eq:dgnorm1d}
\end{align}
In the above, the parameter $h_F$ is the length of the edge $F$ for the plane-network and it is the maximum of the  lengths of the neighboring intervals sharing the point $F$ for the edge-network. The penalty parameters $\eta_{F,i}, \eta_\gamma$ are positive constants, to be specified later. 

We conclude this section by proving a Poincar\'e  inequality for functions in $V_h^p$.  We begin with the enriching map given in \Cref{lemma:enriching_map}; the construction  can be found in many references for classical domains $\Omega \subset \mathbb{R}^d$; see for e.g. \cite{doi:10.1137/S0036142902405217, ern2017finite}. Since our setting is a network of edges or planes, the proof is modified slightly and we provide it for completeness.

We require additional notation. Let $\mathcal{N}_h$ denote the set of all vertices of the mesh $\mathcal{T}_h$ of the network domain $\mathcal{G}$, excluding the vertices  belonging to $\partial \mathcal{G}$. For any $K\in\mathcal{T}_h$, let $\mathcal{N}_K$ denote the set of vertices of $K$. We denote by $\{ \varphi_z \}_{z\in \mathcal{N}_h}$ the corresponding nodal basis of polynomial order $1$. Namely, $\{ \varphi_z \}_{z\in \mathcal{N}_h}$ denotes the basis for the space $V_h^1 \cap H_0^1(\mathcal{G})$. For any vertex $z\in\mathcal{N}_h$, let $\omega_z$ denote the set of mesh elements sharing the node $z$.
The enriching/averaging map $E_h : V_h^1 \rightarrow V_h^1 \cap H^1_0(\mathcal{G})$ is defined  by setting
$$ E_h v = \sum_{z \in \mathcal{N}_h } E_h v(z) \varphi_z , $$
where $E_h v(z)$ is given by
\begin{equation}
E_h v (z) = \frac{1}{\mathrm{card}(\omega_z)} \sum_{K \in \omega_z} v \vert_{K} (z) . 
\end{equation}
Recall that card($\omega_z$) is uniformly bounded with respect to $h$ due to the shape regularity assumption and due to the assumption
that card($I_\gamma$) is uniformly bounded with respect to $h$ for all $\gamma\in \Gamma$.

Finally, throughout the paper, we shall use the standard notation $A \lesssim B$ if there is a generic constant $C$ independent of mesh parameters such that $A \leq C B$.


%
%
%
\begin{lemma}
\label{lemma:enriching_map}
The enriching map $E_h: V_h \rightarrow V_h^1 \cap H_0^1(\mathcal{G})$ satisfies the following properties.  For any $v\in V_h^1$, we have
\begin{equation}\label{eq:L2enrich}
\left(\sum_{i=1}^N \Vert E_h v - v\Vert_{L^2(\Omega_i)}^2\right)^{1/2} \lesssim h
\Vert v \Vert_{\DG},
\end{equation}
and
\begin{equation}\label{eq:gradenrich}
\left(\sum_{i=1}^N \Vert \nabla E_h v \Vert_{L^2(\Omega_i)}^2\right)^{1/2}
\lesssim
\Vert v \Vert_{\mathrm{DG}}.
\end{equation}
\end{lemma}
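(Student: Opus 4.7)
The plan is to follow the classical nodal-averaging argument for interior-penalty DG and adapt the bookkeeping to handle bifurcation vertices of the hypergraph. Fix $K \in \mathcal{T}_{h,i}$ and expand
\[
(E_h v - v)|_K = \sum_{z \in \mathcal{N}_K} \bigl(E_h v(z) - v|_K(z)\bigr)\,\varphi_z^K,
\]
where $\varphi_z^K$ denotes the $P_1$ nodal basis function on $K$ alone, with the convention $E_h v(z) := 0$ if $z \in \partial\mathcal{G}$ (no basis function $\varphi_z$ is attached to such a vertex). Using the elementary scaling $\|\varphi_z^K\|_{L^2(K)} \lesssim h_K^{d/2}$ and $\|\nabla \varphi_z^K\|_{L^2(K)} \lesssim h_K^{d/2 - 1}$, with $d=1$ for the edge-network and $d=2$ for the plane-network, the task reduces to controlling each nodal difference $E_h v(z) - v|_K(z)$ by the jump terms of $\|v\|_{\DG}$.

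The heart of the argument is a case analysis on the node $z$. Writing
\[
E_h v(z) - v|_K(z) = \frac{1}{\mathrm{card}(\omega_z)}\sum_{K' \in \omega_z}\bigl(v|_{K'}(z) - v|_K(z)\bigr),
\]
each term is controlled by a chain of jumps connecting $K$ to $K'$ inside the patch $\omega_z$. If $z$ lies in the interior of some single $\Omega_i$, the chain crosses only interior faces $F \in \mathcal{F}_{h,i}^{\mathrm{int}}$ and the involved nodal jumps are the ordinary $\jump{v}(z)$. If $z$ sits on a bifurcation $\gamma \in \Gamma$, the chain must also pass between different domains $\Omega_i$ and $\Omega_j$, and the corresponding differences $v|_{\Omega_i}(z) - v|_{\Omega_j}(z)$ are precisely the generalized jumps $\jump{v}_{(i,j)}$ appearing in \eqref{eq:dgnorm2d}--\eqref{eq:dgnorm1d}; the uniform bound on $\mathrm{card}(I_\gamma)$ guarantees that the chain has bounded length. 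Finally, if $z \in \partial\mathcal{G}$, then $|E_h v(z) - v|_K(z)| = |v|_K(z)|$ is absorbed by the boundary-penalty contribution in $\|v\|_{\DG,i}$. In all three cases, the discrete trace inequality $|w(z)|^2 \lesssim h_F^{-(d-1)}\|w\|_{L^2(F)}^2$ applied on a face $F$ containing $z$ converts the pointwise values into $L^2$-face norms with exactly the weight appearing in the definition of the DG norm.

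Combining the scaling of $\varphi_z^K$ with the face-by-face bounds, and using shape regularity ($h_K \approx h_F$), one obtains, for each $K$,
\[
\|E_h v - v\|_{L^2(K)}^2 \lesssim h_K^2 \sum_{F \in \mathcal{F}(K)} \frac{1}{h_F}\|\jump{v}\|_{L^2(F)}^2,
\qquad
\|\nabla(E_h v - v)\|_{L^2(K)}^2 \lesssim \sum_{F \in \mathcal{F}(K)} \frac{1}{h_F}\|\jump{v}\|_{L^2(F)}^2,
\]
where $\mathcal{F}(K)$ collects the interior, boundary, and bifurcation faces touching $K$ and $\jump{v}$ is replaced by $\jump{v}_{(i,j)}$ on bifurcation faces. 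Summing over $K \in \mathcal{T}_h$, noting that $\mathrm{card}(\omega_z)$ is uniformly bounded so that every face is counted only a bounded number of times, gives \eqref{eq:L2enrich}; estimate \eqref{eq:gradenrich} then follows by writing $\nabla E_h v = \nabla(E_h v - v) + \nabla v$ and absorbing the second term into the broken-gradient contribution of $\|v\|_{\DG,i}$. The main obstacle is the bifurcation bookkeeping: at a node $z$ lying on some $\gamma \in \Gamma$, the patch $\omega_z$ is spread across several domains, so the usual single-domain path argument must be rerouted through the generalized jumps $\jump{v}_{(i,j)}$, and it is precisely the uniform bound on $\mathrm{card}(I_\gamma)$ together with the bifurcation penalty in the DG norm that keeps these cross-domain contributions controllable.
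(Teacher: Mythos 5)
Your proposal is correct and follows essentially the same route as the paper: the nodal expansion of $E_h v - v$, the case analysis on whether the vertex $z$ is interior to a single $\Omega_i$, lies on a bifurcation $\gamma$, or lies on $\partial\mathcal{G}$, the chain-of-jumps bound using the generalized jumps $\jump{v}_{(i,j)}$ at bifurcation vertices, and the scaling/inverse inequalities that convert nodal values into the weighted face norms of $\Vert v\Vert_{\DG}$. The only cosmetic difference is that you obtain the gradient bound directly from the scaling of $\nabla\varphi_z^K$, whereas the paper deduces it from the $L^2$ bound via a local inverse inequality; these are equivalent.
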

\begin{proof}
    We give the proof for the plane-network case as a similar and simpler argument can be used for the edge-network.  We need to introduce additional notation. For any $z\in\mathcal{N}_h$, let $\mathcal{F}_z$ denote the set of edges in $\omega_z$ that share the vertex $z$. If the vertex
$z$ does not belong to $\Gamma$, then $\mathcal{F}_z = \mathcal{F}_z^{\text{int}}$ consists only of interior
edges that belong to one of the domains $\Omega_i$. If $z$ lies on one bifurcation $\gamma\in\Gamma$, then
$\mathcal{F}_z$ is the union of disjoint sets of interior edges belonging to $\Omega_i$ for $i\in I_\gamma$, and
of edges that belong to the bifurcation $\gamma$. We write in this case:
$\mathcal{F}_z = \mathcal{F}_z^{\text{int}} \cup \mathcal{F}_z^\gamma$.

We fix $v\in V_h^1$ and $K\in\mathcal{T}_h$. We claim that:
\begin{multline}
\label{eq:L2enrichK}
\Vert E_h v - v\Vert_{L^2(K)}  + h_K \Vert \nabla (E_h v - v)\Vert_{L^2(K)} \\ \lesssim h_K
\sum_{z\in\mathcal{N}_K} \left(
\sum_{F\in \mathcal{F}_z^{\text{int}}}  h_F^{-1/2}\Vert [v]\Vert_{L^2(F)}
+ \sum_{F\in\mathcal{F}_z^{\gamma}} h_F^{-1/2}\sum_{(i,j)\in D_\gamma} \Vert [v]_{(i,j)}\Vert_{L^2(F)}
\right),
\end{multline}
Clearly, a consequence of the above inequalities is the desired result \eqref{eq:L2enrich} and \eqref{eq:gradenrich}. It suffices to show the estimate on $\Vert E_h v - v\Vert_{L^2(K)}$ since the estimate on $\Vert \nabla (E_h v - v)\Vert_{L^2(K)}$ follows from a local inverse inequality. 
By definition of $E_h v$, we  have
\begin{align*}
\Vert E_h v - v\Vert_{L^2(K)} & \lesssim \vert K \vert^{1/2} \sum_{z\in\mathcal{N}_K} \left| E_h v(z) - v|_K(z)\right|\\
& \lesssim \vert K \vert^{1/2} \sum_{z\in\mathcal{N}_K} \sum_{K'\in\omega_z} \left| v|_{K'}(z)-v|_K(z)\right|.
\end{align*}
If $z$ does not belong to $\Gamma$, we have
\[
\sum_{K'\in\omega_z} | v|_{K'}(z)-v|_K(z)| \lesssim \sum_{F\in \mathcal{F}_z^{\text{int}}} |F|^{-1/2} \Vert [v]\Vert_{L^2(F)}.
\]
If $z$ belongs to $\Gamma$, then $z$ belongs to some segment $\gamma$. If we have for instance $K\subset\Omega_{i_0}$,
then
\begin{align*}
\sum_{K'\in\omega_z} | v|_{K'}(z)-v|_K(z)| & \lesssim \sum_{F\in \mathcal{F}_z^{\text{int}}} |F|^{-1/2} \Vert [v]\Vert_{L^2(F)}
+ \sum_{F\in\mathcal{F}_z^{\gamma}} |F|^{-1/2} \sum_{j\in I_\gamma} \Vert [v]_{(i_0,j)}\Vert_{L^2(F)}
\\
& \lesssim \sum_{F\in \mathcal{F}_z^{\text{int}}} |F|^{-1/2} \Vert [v]\Vert_{L^2(F)}
+ \sum_{F\in\mathcal{F}_z^{\gamma}} |F|^{-1/2} \sum_{(i,j)\in D_\gamma} \Vert [v]_{(i,j)}\Vert_{L^2(F)}.
\end{align*}
Therefore, we have in either case:
\[
\Vert E_h v - v\Vert_{L^2(K)} \lesssim 
\sum_{z\in\mathcal{N}_K} \left(
\sum_{F\in \mathcal{F}_z^{\text{int}}} \frac{|K|^{1/2}}{|F|^{1/2}} \Vert [v]\Vert_{L^2(F)}
+ \sum_{F\in\mathcal{F}_z^{\gamma}} \frac{|K|^{1/2}}{|F|^{1/2}} \sum_{(i,j)\in D_\gamma} \Vert [v]_{(i,j)}\Vert_{L^2(F)}
\right).
\]
This implies the desired bound stated in \eqref{eq:L2enrichK}. 
\end{proof}
\begin{lemma}[Stability of $L^2$ projection]
\label{lemma:L2proj}
For any $v\in H^1(\mathcal{T}_h)$, denote by $\pi_h v \in V_h^1$ the $L^2$ projection of $v$ onto $V_h^1$. Then, we have
\begin{equation}\label{eq:L2stab}
\forall v \in H^1(\mathcal{T}_h),\quad
\Vert \pi_h v \Vert_{\DG}
\lesssim \Vert v \Vert_{\DG}.
\end{equation}
\end{lemma}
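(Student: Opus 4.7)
The plan is to split each contribution of $\|\pi_h v\|_{\DG}$ by writing $\pi_h v = v + (\pi_h v - v)$ and bound the two pieces separately, relying on the fact that $\pi_h$ is \emph{element-local}: since $V_h^1$ has no inter-element coupling, $\pi_h v|_K$ is simply the $L^2$-projection of $v|_K$ onto $\mathbb{P}_1(K)$. This gives two standard local tools that I will treat as black boxes: the approximation estimate $\|\pi_h v - v\|_{L^2(K)} \lesssim h_K \|\nabla v\|_{L^2(K)}$, together with the $H^1$-stability bound $\|\nabla \pi_h v\|_{L^2(K)} \lesssim \|\nabla v\|_{L^2(K)}$ (the latter obtained by subtracting the element mean $c_K$, invoking the inverse inequality on $\pi_h(v - c_K)$ and then Poincar\'e on $K$), and the standard scaled trace inequality $\|w\|_{L^2(F)}^2 \lesssim h_F^{-1}\|w\|_{L^2(K)}^2 + h_K \|\nabla w\|_{L^2(K)}^2$ valid for $w\in H^1(K)$.

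The broken-gradient contribution $\sum_{K} \|\nabla \pi_h v\|_{L^2(K)}^2$ follows immediately from $H^1$-stability, and is controlled by $\sum_i \|\nabla v\|_{L^2(\Omega_i)}^2 \leq \|v\|_{\DG}^2$. For an interior facet $F\in\mathcal{F}_{h,i}^{\mathrm{int}}$ shared by $K_1,K_2$, I would write
\[
[\pi_h v]_F = [v]_F + (\pi_h v - v)|_{K_1} - (\pi_h v - v)|_{K_2}
\]
and apply the trace inequality together with the local approximation bound to the error terms, obtaining $\|(\pi_h v - v)|_{K_\ell}\|_{L^2(F)}^2 \lesssim h_K \|\nabla v\|_{L^2(K_\ell)}^2$. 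Multiplying by $\eta_{F,i}/h_F$ and summing over $F$ (using mesh shape-regularity so that $h_F\simeq h_K$) yields a bound by the interior jump term of $\|v\|_{\DG}^2$ plus the broken gradient. The boundary contribution $h_F^{-1}\|\pi_h v\|_{L^2(F)}^2$ with $F\in\mathcal{F}_{h,i}^{\partial\mathcal{G}}$ is handled identically, using the boundary component of $\|v\|_{\DG}$ in place of the jump.

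The bifurcation terms are the only genuinely new ingredient, but the argument is parallel. Fix $\gamma\in\Gamma$, $F\in\mathcal{F}_h^\gamma$ and $(i,j)\in D_\gamma$; let $K_i\subset\Omega_i$ and $K_j\subset\Omega_j$ be the (matching) mesh elements with $F\subset\partial K_i\cap\partial K_j$. Writing
\[
\jump{\pi_h v}_{(i,j)} = \jump{v}_{(i,j)} + (\pi_h v - v)|_{K_i} - (\pi_h v - v)|_{K_j},
\]
the trace-plus-approximation bound gives
\[
\|\jump{\pi_h v}_{(i,j)}\|_{L^2(F)}^2 \lesssim \|\jump{v}_{(i,j)}\|_{L^2(F)}^2 + h_F\,(\|\nabla v\|_{L^2(K_i)}^2 + \|\nabla v\|_{L^2(K_j)}^2).
\]
Dividing by $h_F$, multiplying by $\eta_\gamma$, and summing over $(i,j)\in D_\gamma$, $F\in\mathcal{F}_h^\gamma$ and $\gamma\in\Gamma$, the first term is absorbed by the bifurcation component of $\|v\|_{\DG}^2$, while the second is controlled by the broken gradient: here I use the uniform bound on $\mathrm{card}(D_\gamma)$ and the fact that each element $K$ appears with multiplicity $O(1)$ across all bifurcation facets of its boundary. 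The analogous, simpler computation covers the edge-network (where $h_F$ is replaced by $h_\gamma$ and the sum over $F\in\mathcal{F}_h^\gamma$ disappears).

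I do not expect a real obstacle here: the element-local nature of $\pi_h$ collapses the argument to familiar trace-and-approximation bookkeeping. The only subtlety is ensuring, when summing the $\|\nabla v\|_{L^2(K_i\cup K_j)}^2$ contributions at bifurcations, that no element is overcounted beyond a constant depending on shape-regularity and the uniform bound on $\mathrm{card}(I_\gamma)$; this is precisely the assumption invoked after the construction of the enriching map in Lemma~\ref{lemma:enriching_map}.
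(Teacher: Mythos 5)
Your proposal is correct, and it is precisely the ``standard argument'' that the paper invokes without writing out: the paper skips this proof entirely, so your element-local decomposition $\pi_h v = v + (\pi_h v - v)$, combined with the local $L^2$-approximation bound, the inverse-plus-Poincar\'e $H^1$-stability of $\pi_h$ on each $K$, and the scaled trace inequality, is exactly what is meant. The only non-classical ingredient is the bifurcation-facet bookkeeping, and you handle it correctly by noting that the meshes match across $\gamma$, that $\mathrm{card}(D_\gamma)$ is uniformly bounded, and that each element is counted $O(1)$ times in the resulting sums.
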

The proof of the lemma is skipped as it follows standard arguments.

A corollary of \Cref{lemma:enriching_map} and \Cref{lemma:L2proj} is Poincar\'e's inequality in the broken space $H^1(\mathcal{T}_h)$.
\begin{lemma}[Poincar\'e's inequality] \label{lemma:Poincare}
For all $v \in H^1(\mathcal{T}_h)$,
\begin{equation}
  \sum_{i=1}^N \|v\|_{L^2(\Omega_i)}^2 \lesssim \|v\|_{\DG}^2. 
\end{equation}
\end{lemma}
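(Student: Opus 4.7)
The plan is to first establish the estimate for discrete functions in $V_h^1$ by combining the enriching map of \Cref{lemma:enriching_map} with the continuous Poincar\'e inequality \eqref{eq:Poincarecontinuous}, and then to bootstrap to arbitrary $v \in H^1(\mathcal{T}_h)$ via the $L^2$ projection and its stability (\Cref{lemma:L2proj}).

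For the first step, I take $v \in V_h^1$ and write
\[
\Vert v \Vert_{L^2(\Omega_i)} \leq \Vert v - E_h v \Vert_{L^2(\Omega_i)} + \Vert E_h v \Vert_{L^2(\Omega_i)}.
\]
Squaring and summing over $i$, the first piece is controlled by \eqref{eq:L2enrich}, giving $\sum_i \Vert v - E_h v \Vert_{L^2(\Omega_i)}^2 \lesssim h^2 \Vert v \Vert_{\DG}^2$. For the second piece, since $E_h v \in V_h^1 \cap H_0^1(\mathcal{G})$, the continuous Poincar\'e inequality \eqref{eq:Poincarecontinuous} yields
\[
\sum_{i=1}^N \Vert E_h v \Vert_{L^2(\Omega_i)}^2 \leq C_P^2 \sum_{i=1}^N \Vert \nabla E_h v \Vert_{L^2(\Omega_i)}^2 \lesssim \Vert v \Vert_{\DG}^2,
\]
where the last inequality uses \eqref{eq:gradenrich}. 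Combining these two bounds establishes the result for $V_h^1$.

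For the second step, let $v \in H^1(\mathcal{T}_h)$ and let $\pi_h v$ denote its $L^2$ projection onto $V_h^1$. By the triangle inequality,
\[
\Vert v \Vert_{L^2(\Omega_i)} \leq \Vert v - \pi_h v \Vert_{L^2(\Omega_i)} + \Vert \pi_h v \Vert_{L^2(\Omega_i)}.
\]
Standard elementwise $L^2$ projection estimates give $\Vert v - \pi_h v \Vert_{L^2(K)} \lesssim h_K \Vert \nabla v \Vert_{L^2(K)}$, so summing over $K \in \mathcal{T}_{h,i}$ and then over $i$ yields $\sum_i \Vert v - \pi_h v \Vert_{L^2(\Omega_i)}^2 \lesssim h^2 \Vert v \Vert_{\DG}^2$. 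The remaining term is handled by applying the first step to $\pi_h v \in V_h^1$, followed by the $L^2$ projection stability of \Cref{lemma:L2proj}:
\[
\sum_{i=1}^N \Vert \pi_h v \Vert_{L^2(\Omega_i)}^2 \lesssim \Vert \pi_h v \Vert_{\DG}^2 \lesssim \Vert v \Vert_{\DG}^2.
\]
Adding the two contributions completes the argument.

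The delicate point is really packaged inside \Cref{lemma:enriching_map}: the enriching map must handle nodes lying on bifurcation sets $\Gamma$, where the generalized jumps $[v]_{(i,j)}$ across the hypergraph replace the usual two-element jumps. Once that lemma is in hand, the Poincar\'e inequality here is structurally the same as in the classical DG setting, and the only new ingredient is invoking the continuous Poincar\'e inequality \eqref{eq:Poincarecontinuous} on $H_0^1(\mathcal{G})$ rather than on a standard Lipschitz domain.
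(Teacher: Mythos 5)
Your proposal is correct and follows essentially the same route as the paper: the paper's single three-term splitting $v = (v-\pi_h v) + (\pi_h v - E_h\pi_h v) + E_h\pi_h v$ is exactly your two-step argument unpacked, using the same ingredients (approximation of the $L^2$ projection, \eqref{eq:L2enrich}, the continuous Poincar\'e inequality \eqref{eq:Poincarecontinuous} applied to $E_h(\pi_h v)$, \eqref{eq:gradenrich}, and \eqref{eq:L2stab}). No substantive difference.
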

 \begin{proof}
 Fix $v \in H^1(\mathcal{T}_h)$ and denote by $\pi_h v \in V_h^1$ the $L^2$ projection of $v$ onto $V_h^1$. We have
\begin{align*}
\sum_{i=1}^N \|v\|^2_{L^2(\Omega_i)} & \lesssim   \sum_{i=1}^N \|v - \pi_h v\|_{L^2(\Omega_i)}^2
+ \sum_{i=1}^N \|\pi_h v - E_h (\pi_h v)\|^2_{L^2(\Omega_i)} + \sum_{i=1}^N \|E_h (\pi_h v) \|_{L^2(\Omega_i)}^2  \\
& \lesssim  h \|v\|_{\DG}^2 +
 h \|\pi_h v\|^2_{\DG} + \sum_{i=1}^N \Vert E_h(\pi_h v)\Vert_{L^2(\Omega_i)}^2,
\end{align*}
where  we used the approximation  of the $L^2$ projection and
\eqref{eq:L2enrich}.
Next, we apply Poincar\'e's inequality \eqref{eq:Poincarecontinuous} to $E_h(\pi_h v)\in H_0^1(\mathcal{G})$:
\begin{align*}
\sum_{i=1}^N \Vert E_h(\pi_h v)& \Vert_{L^2(\Omega_i)}^2  \lesssim \sum_{i=1}^N \Vert \nabla E_h(\pi_h v)\Vert_{L^2(\Omega_i)}^2
 \lesssim \Vert \pi_h v \Vert_{\DG}^2
 \lesssim \Vert v \Vert_{\DG}^2,
\end{align*}
where we used \eqref{eq:gradenrich} and \eqref{eq:L2stab}. We can then conclude by combining the bounds.
\end{proof}

\subsection{The DG schemes}

The DG approximation of problem~\eqref{eq:pdei2d}-\eqref{eq:coupled_interfaceB} satisfies: find $u_h \in V_h^p$ such that
\begin{align}\label{eq:scheme}
a(u_h, v_h) 
 = \sum_{i=1}^N \int_{\Omega_i} f_i v_h|_{\Omega_i}, \quad\forall v_h\in V_h^p,
\end{align}
where $a(\cdot,\cdot)$ is a bilinear form that is presented below for each type of network for completeness. The forms differ primarily because of the different spatial dimensions. The choice of the parameter $\epsilon_i, \epsilon \in \{-1, 0, 1\}$ in the definition of the bilinear forms, yields the non-symmetric interior penalty Galerkin (NIPG), incomplete interior penalty Galerkin (IIPG), symmetric interior penalty Galerkin (SIPG) versions of the interior penalty DG method. 

\subsubsection*{Bilinear form for edge-network}

\begin{align}
 a(w,v) = &\sum_{i=1}^N a_i(w_i,v_i)
-\sum_{\gamma\in\Gamma} 
\{\sigma(w)\} \odot \jump{v}
-\epsilon \sum_{\gamma\in\Gamma} 
\{\sigma(v)\} \odot \jump{w}
+ \sum_{\gamma\in\Gamma} 
\frac{\eta_\gamma}{h_\gamma} \jump{w} \odot
\jump{v}.
\label{eq:a1Ddef}
\end{align}
The form $a_i(\cdot,\cdot)$ represents the interior penalty DG discretization of the PDE on each segment $\Omega_i$:
\begin{align*}
a_i(w_i,v_i)
= & \sum_{K\in\mathcal{T}_{h,i}} \int_K 
\kappa_i
\frac{\partial w_i}{\partial s} \frac{\partial v_i}{\partial s}
-\sum_{F\in\mathcal{F}_{h,i}^\mathrm{int}}
\{ \kappa_i
\frac{\partial w_i}{\partial s} \}|_F [v_i]|_F
-\epsilon_i \sum_{F\in\mathcal{F}_{h,i}^\mathrm{int}}
\{ \kappa_i
\frac{\partial v_i}{\partial s}\}|_F [w_i]|_F\\
&+ \sum_{F\in\mathcal{F}_{h,i}^\mathrm{int}}
\frac{\eta_{F,i}}{h_F} [w_i]|_F [v_i]|_F
+ \sum_{F\in\mathcal{F}_{h,i}^{\partial\mathcal{G}}} \frac{\eta_{F,i}}{h_F}\,
w_i|_F \, v_i|_F\\
&-\sum_{F\in\mathcal{F}_{h,i}^{\partial\mathcal{G}}} \kappa_i
\frac{\partial w_i}{\partial s}|_F \, n_i(F) \, v_i
-\epsilon_i\sum_{F\in\mathcal{F}_{h,i}^{\partial\mathcal{G}}} \kappa_i
\frac{\partial v_i}{\partial s}|_F \, n_i(F) \, w_i.
\end{align*}

At any interior point $F$, the standard jump operator $[\cdot]|_F$ and average operator $\{ \cdot \}|_F$ used in the form above, are defined by:
\[
v(F^\pm) = \lim_{s\rightarrow 0, s>0} v(F\pm s), \quad
[v]|_F = v(F^{-}) - v(F^{+}), 
\quad
\{ v\}|_F  = \frac12 v(F^{-}) + \frac12 v(F^{+}).
\]

\subsubsection*{Bilinear form for plane-network}

\begin{align}
  a(w,v) = \sum_{i=1}^N a_i(w_i,v_i)
&-\sum_{\gamma\in\Gamma} \sum_{F\in\mathcal{F}_h^\gamma}
\int_F \avg{\bfsigma(w)} \odot \jump{v}
-\epsilon \sum_{\gamma\in\Gamma} 
\sum_{F\in\mathcal{F}_h^\gamma}
\int_F \avg{\bfsigma(v)} \odot \jump{w}
\nonumber\\
&+ \sum_{\gamma\in\Gamma} \eta_\gamma
\sum_{F\in\mathcal{F}_h^\gamma} \frac{1}{h_F}
\int_F  \jump{w} \odot
\jump{v}. \label{eq:a2Ddef}
\end{align}
 For each domain $\Omega_i$, the form
$a_i(\cdot,\cdot)$ is defined by:
\begin{align*}
a_i(w_i,v_i)
= \sum_{K\in\mathcal{T}_{h,i}} \int_K \kappa_i \nabla w_i\cdot\nabla v_i
-\sum_{F\in\mathcal{F}_{h,i}^\mathrm{int}}
\int_F \{\kappa_i \nabla w_i\cdot\bfn_F\} [v_i]\\
-\epsilon_i \sum_{F\in\mathcal{F}_{h,i}^\mathrm{int}}
\int_F \{\kappa_i \nabla v_i\cdot\bfn_F\} [w_i]
+ \sum_{F\in\mathcal{F}_{h,i}^\mathrm{int}}
\frac{\eta_{F,i}}{h_F} \int_F [w_i] [v_i]\\
-\sum_{F\in\mathcal{F}_{h,i}^{\partial\mathcal{G}}}
\int_F \kappa_i \nabla w_i\cdot\bfn_F \, v_i
-\epsilon_i \sum_{F\in\mathcal{F}_{h,i}^{\partial\mathcal{G}}}
\int_F \kappa_i \nabla v_i\cdot\bfn_F \, w_i
+\sum_{F\in\mathcal{F}_{h,i}^{\partial\mathcal{G}}} 
\frac{\eta_{F,i}}{h_F} \int_F w_i \, v_i.
\end{align*}
 To complete the definition of $a_i(\cdot,\cdot)$, we introduce the vectors $\bfn_F$, the jump and average operators $[\cdot], \{\cdot\}$. 
For each interior edge $F\in\mathcal{F}_{h,i}^{\mathrm{int}}$, we fix a unit normal vector to $F$, that is denoted simply by  $\bfn_F$ for readability; this is well defined since the domain $\Omega_i$ is embedded into a plane.
For each boundary edge $F\in \mathcal{F}_{h,i}^{\partial\mathcal{G}}$, the vector $\bfn_F$ is defined to be the outward normal $\bfn_i$ to $\Omega_i$.
We now recall the standard DG jump operator $[\cdot]|_F$ and average operator $\{ \cdot\}|_F$. Fix an edge $F\in \mathcal{F}_{h,i}^{\mathrm{int}}$ and denote by $K_1$ and $K_2$ the two triangles sharing $F$ such that the vector $\bfn_F$ points from $K_1$ into $K_2$. We have
\[
[v]|_F = v|_{K_1}  - v|_{K_2}, \quad
\{v\}|_F = \frac12 v|_{K_1} + \frac12 v|_{K_2}.
\]
\subsection{Derivation of the DG scheme}
\label{sec:deriv}

We present the formal derivation of our scheme only in the case of the plane-network problem, as the case of the edge-network problem is treated similarly.  
We multiply \eqref{eq:pdei2d} by a test function $v\in V_h^p$, use Green's theorem (which is valid if $u|_{\Omega_i} \in H^2(\Omega_i)$ for any $i$) and sum over the domains to obtain 
\[
\sum_{i=1}^N \tilde{a}_i(u_i,v_i)
-\sum_{\gamma\in\Gamma} \sum_{i\in I_\gamma} 
\sum_{F\in\mathcal{F}_h^\gamma}
\int_F \bfsigma_i(u_i)\cdot\bfn_i \, v_i
 = \sum_{i=1}^N \int_{\Omega_i} f_i v_i,
\]
where the form $\tilde{a}_i$ is:
\begin{align*}
\tilde{a}_i(u_i,v_i)
= \sum_{K\in\mathcal{T}_{h,i}} \int_K \kappa_i \nabla u_i\cdot\nabla v_i
-\sum_{F\in\mathcal{F}_{h,i}^\mathrm{int}}
\int_F \kappa_i \nabla u_i\cdot\bfn_F [v_i]|_F
-\sum_{F\in\mathcal{F}_{h,i}^{\partial\mathcal{G}}}
\int_F \kappa_i \nabla u_i\cdot\bfn_F \, v_i.
\end{align*}
With \Cref{lem:modjump}, we rewrite
\[
\sum_{i=1}^N \tilde{a}_i(u_i,v_i)
-\sum_{\gamma\in\Gamma} \sum_{F\in\mathcal{F}_h^\gamma}
\frac{1}{\mathrm{card}(I_\gamma)}
\sum_{(i,j)\in D_\gamma}
\int_F\{ \bfsigma(u)\}_{(i,j)}
[v]_{(i,j)}
 = \sum_{i=1}^N \int_{\Omega_i} f_i v_i.
\]
Using the short-hand notation \eqref{eq:short_hand_notation} and the fact that
$u$ satisfies \eqref{eq:coupled_interfaceB} at each bifurcation, we have
\[
\sum_{i=1}^N \tilde{a}_i(u_i,v_i)
-\sum_{\gamma\in\Gamma}\sum_{F\in\mathcal{F}_h^\gamma}\int_F \{ \bfsigma(u)\} \odot [v]
-\sum_{\gamma\in\Gamma} \sum_{F\in\mathcal{F}_h^\gamma}
\int_F \{ \bfsigma(v)\} \odot [u]
 = \sum_{i=1}^N \int_{\Omega_i} f_i v_i.
\]
For each domain $\Omega_i$, the term $\tilde{a}_i(u_i,v_i)$ leads to the standard interior penalty DG methods.  Indeed, 
on the interior of each domain $\Omega_i$, if $u\in H^2(\Omega_i)$, the jump of the flux
$\kappa_i \nabla u_i \cdot \bfn_F$ across each edge $F$ is zero in the $L^2$ sense.  In addition, the jump of the solution $u_i$ is also zero in the $L^2$ sense. Therefore we apply symmetrization terms and penalty terms to obtain \eqref{eq:scheme}. 

\begin{remark}[Consistency for edge-network]\label{rem:consist}
    In the case of the edge-network, if the coefficient $\kappa|_{\Omega_i}$ is smooth enough (for instance $\kappa_i$ is a constant or $\kappa_i \in W^{1,\infty}(\Omega_i)$), then the exact solution $u|_{\Omega_i}$ belongs to $H^2(\Omega_i)$ for all $1\leq i\leq N$. In that case, the derivation above shows that the DG scheme is consistent:
    \begin{equation}\label{eq:1Dconsistency}
    \forall v_h \in V_h^p, \quad a(u,v_h) = \sum_{i=1}^N \int_{\Omega_i} f_i v_h.
    \end{equation}
\end{remark}

\section{Well-posedness of the DG scheme}
\label{sec:exist}

  We now state and prove coercivity of the form $a(\cdot,\cdot)$ under the following assumption on the parameters $\epsilon_i$ and $\eta_{F,i}$. 
\begin{assumption}\label{hyp1}
Fix $1\leq i\leq N$. 
Assume that on one hand, if  $\epsilon_i = -1$, then $\eta_{F,i} >0$, and on the other hand if $\epsilon_i = +1$ or $0$, then $\eta_{F,i}$ is bounded below by a large enough positive constant.  Assume also that for all bifurcations $\gamma$, the parameter $\eta_\gamma$ is bounded below by a large enough positive constant. 
\end{assumption}

\begin{lemma}\label{lem:coerc}
Under \Cref{hyp1}, we have
\[
\forall v_h\in V_h^p,\quad  
\Vert v_h \Vert_{\DG}^2
\lesssim a(v_h,v_h).
\]
\end{lemma}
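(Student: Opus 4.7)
The approach is standard energy estimation: substitute $w = v = v_h$ in $a(\cdot,\cdot)$ and decompose the resulting expression into three groups: (a) the per-subdomain IPDG forms $\sum_{i=1}^N a_i(v_h, v_h)$, (b) the bifurcation symmetrization contributions, which for $w = v$ collapse into one copy scaled by $(1+\epsilon)$, and (c) the bifurcation penalty $\sum_{\gamma}\eta_\gamma \sum_{F\in\mathcal{F}_h^\gamma} h_F^{-1}\int_F \jump{v_h}\odot\jump{v_h}$, which already equals the $\Gamma$-part of $\|v_h\|_{\DG}^2$ up to the factor $\eta_\gamma$. The plan is then to show that (a) controls $\sum_i \|v_h\|_{\mathrm{DG},i}^2$ and that the symmetrization in (b) can be absorbed by (a) and (c) provided the penalty parameters in \Cref{hyp1} are taken large enough.

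For (a), I would invoke the classical coercivity argument for interior-penalty DG applied to each $\Omega_i$ independently. If $\epsilon_i = -1$, the consistency and symmetrization terms cancel and only the broken gradient and the interior and boundary penalties survive, giving positivity directly. If $\epsilon_i\in\{0,1\}$, each face integral $\int_F\{\kappa_i\nabla v_h\cdot\bfn_F\}[v_h]$ is bounded by Cauchy–Schwarz, the discrete inverse trace inequality $\|\nabla v_h\|_{L^2(F)}\lesssim h_F^{-1/2}\|\nabla v_h\|_{L^2(K)}$ on each adjacent element $K$, and Young's inequality; the residual jump terms are absorbed by the interior penalty for $\eta_{F,i}$ large enough, as in \Cref{hyp1}. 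This yields $\sum_i a_i(v_h,v_h)\gtrsim \sum_i \|v_h\|_{\mathrm{DG},i}^2$.

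For (b), I would treat each summand $\int_F \avg{\bfsigma(v_h)}_{(i,j)}\jump{v_h}_{(i,j)}$ by Cauchy–Schwarz on $F$, split the average by the triangle inequality into a $\bfsigma_i(v_{h,i})\cdot\bfn_i$ piece and a $\bfsigma_j(v_{h,j})\cdot\bfn_j$ piece, and control each by $\|\kappa\|_\infty h_F^{-1/2}\|\nabla v_h\|_{L^2(K)}$ via the inverse trace inequality on the corresponding adjacent triangle in $\Omega_i$ or $\Omega_j$. Young's inequality with a small parameter $\delta$ then yields gradient contributions, absorbed into the bulk estimate proved in step (a), and scaled jump contributions $h_F^{-1}\|\jump{v_h}_{(i,j)}\|_{L^2(F)}^2$, absorbed by (c) once $\eta_\gamma$ is sufficiently large.

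The main obstacle is combinatorial rather than analytical: a single triangle $K\subset \Omega_i$ adjacent to $\gamma$ may appear in every pair $(i,j)\in D_\gamma$ that contains the index $i$, so the gradient it produces is counted up to $\mathrm{card}(D_\gamma)\lesssim \mathrm{card}(I_\gamma)^2$ times after summing over pairs, and the factor $1/\mathrm{card}(I_\gamma)$ sitting in front of the $\odot$ product only partially compensates. The uniform bound on $\mathrm{card}(I_\gamma)$ assumed in \Cref{sec:continuous} keeps all such constants mesh-independent and yields a uniform threshold for $\eta_\gamma$ as prescribed by \Cref{hyp1}. The edge-network case is handled identically with $h_F$ replaced by $h_\gamma$, vector normals by the scalars $n_i(\gamma)$, and the boundary inverse trace inequality by its one-dimensional counterpart.
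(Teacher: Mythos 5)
Your proposal is correct and follows essentially the same route as the paper: classical per-subdomain coercivity for $\sum_i a_i(v_h,v_h)$, then Cauchy--Schwarz plus a discrete trace inequality on the elements adjacent to each bifurcation face to bound the $\avg{\bfsigma(v_h)}\odot\jump{v_h}$ terms, with the resulting gradient contributions absorbed into the bulk and the jump contributions absorbed by the $\eta_\gamma$ penalty via Young's inequality, using the uniform bound on $\mathrm{card}(I_\gamma)$ (equivalently $\mathrm{card}(D_\gamma)$) to keep the constants mesh-independent. The only cosmetic difference is that the paper bounds the worst case $2\sum_\gamma\sum_F\int_F\avg{\kappa\nabla v}\odot\jump{v}$ directly rather than tracking the $(1+\epsilon)$ factor, which changes nothing.
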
 
\begin{proof}
Again, here we provide a proof for the plane-network case, the edge-network case being handled in a similar manner.
Using an argument that is standard for the analysis of interior penalty DG methods for elliptic problems \cite{riviere2008discontinuous}, one can show  that if $\eta_{F,i}$ is large enough, we have
\[
\forall v \in V_h^p, \quad  \Vert  v\Vert_{\DG,i}^2 \lesssim a_i(v_i,v_i).
\]
The new contribution is the term involving the bifurcation nodes.  We provide a few details below as the argument is based on standard DG techniques.  We remark that if $w=v$, the last term in \eqref{eq:a2Ddef} appears in the DG norm.  It remains to find a lower bound for the following term
\[
2\sum_{\gamma\in\Gamma} 
\sum_{F\in\mathcal{F}_h^\gamma} 
\int_F \avg{\kappa\nabla v} \odot \jump{v}
\lesssim
\sum_{\gamma\in\Gamma} 
\sum_{(i, j)\in D_\gamma} \sum_{F\in\mathcal{F}_h^\gamma}
\Vert \avg{\kappa\nabla v}_{(i, j)}\Vert_{L^2(F)} \Vert \jump{v}_{(i, j)} \Vert_{L^2(F)} 
\]
\[
\lesssim \Big(\sum_{\gamma\in\Gamma} 
\sum_{(i, j)\in D_\gamma} \sum_{F\in\mathcal{F}_h^\gamma}
h_F^{-1} \Vert \jump{v}_{(i, j)} \Vert_{L^2(F)}^2\Big)^{1/2}\, 
\Big(\sum_{\gamma\in\Gamma} 
\sum_{(i, j)\in D_\gamma} \sum_{F\in\mathcal{F}_h^\gamma}
h_F \Vert \avg{\kappa\nabla v}_{(i, j)} \Vert_{L^2(F)}^2\Big)^{1/2}.
\]
With a discrete trace inequality, we have for a fixed bifurcation $\gamma$, a fixed pair $(i,j)\in D_\gamma$ and a fixed edge $F\in \mathcal{F}_{h,i}^{\mathrm{int}}$
\begin{align*}
h_F \Vert \avg{\kappa\nabla v}_{(i, j)} \Vert_{L^2(F)}^2
&\leq 2 h_F \Vert \kappa_i \nabla v_i \cdot \bfn_i \Vert_{L^2(F)}^2
+ 2 h_F \Vert \kappa_j \nabla v_j \cdot \bfn_j \Vert_{L^2(F)}^2\\
&\lesssim \Vert \nabla v_i \Vert_{L^2(K_F^+)}^2
+ \Vert \nabla v_j \Vert_{L^2(K_F^-)}^2,
\end{align*}
where $K_F^+ \in \mathcal{T}_{h,i}$ and $K_F^-\in\mathcal{T}_{h,j}$ are the mesh elements in the two surfaces $\Omega_i$ and $\Omega_j$ that share $F$.
Since the number of bifurcations is finite  and the cardinality of $D_\gamma$ is uniformly bounded over $\gamma$, we then have
\Bk
\begin{align*}
\sum_{\gamma\in\Gamma} \sum_{(i,j)\in D_\gamma}
\sum_{F\in\mathcal{F}_h^i} h_F \Vert \avg{\kappa\nabla v}_{(i, j)} \Vert_{L^2(F)}^2
\lesssim
\sum_{i=1}^N \sum_{K\in\mathcal{T}_{h,i}} \Vert \nabla v_i \Vert_{L^2(K)}^2.
\end{align*}
Combining the bounds yields coercivity if the penalty parameter $\eta_\gamma$ is large enough.
\end{proof}
Problem~\eqref{eq:scheme} is finite-dimensional; thus the coercivity of the form immediately yields existence and uniqueness of the discrete solution.
\begin{lemma}
Under the conditions of \Cref{lem:coerc}, there exists a unique solution $u_h$ to \eqref{eq:scheme}.
\end{lemma}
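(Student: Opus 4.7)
The plan is to exploit the fact that \eqref{eq:scheme} is a square linear system in the finite-dimensional space $V_h^p$, so that existence and uniqueness are equivalent; it then suffices to establish either one. I will argue uniqueness directly from the coercivity already proved in \Cref{lem:coerc}, together with \Cref{lemma:Poincare} to conclude that the DG norm is genuinely a norm on $V_h^p$.

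Concretely, after fixing a basis of $V_h^p$, the scheme \eqref{eq:scheme} is equivalent to a linear system $A\mathbf{u} = \mathbf{b}$, where $A$ is a square matrix whose entries are values $a(\phi_i, \phi_j)$ of the bilinear form on basis functions. For such a system, injectivity of $A$ implies invertibility, hence both existence and uniqueness. To show injectivity, suppose $u_h \in V_h^p$ satisfies $a(u_h, v_h) = 0$ for every $v_h \in V_h^p$. Setting $v_h = u_h$ and applying \Cref{lem:coerc} yields
\[
\Vert u_h \Vert_{\DG}^2 \lesssim a(u_h, u_h) = 0,
\]
so $\Vert u_h \Vert_{\DG} = 0$.

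It remains to check that $\Vert \cdot \Vert_{\DG}$ vanishing forces $u_h = 0$. This follows from \Cref{lemma:Poincare}: since $V_h^p \subset H^1(\mathcal{T}_h)$, we obtain
\[
\sum_{i=1}^N \Vert u_h \Vert_{L^2(\Omega_i)}^2 \lesssim \Vert u_h \Vert_{\DG}^2 = 0,
\]
so $u_h \equiv 0$ on $\mathcal{G}$. Hence the homogeneous problem admits only the zero solution, $A$ is invertible, and \eqref{eq:scheme} has a unique solution $u_h \in V_h^p$ for every right-hand side.

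There is essentially no obstacle here: the coercivity lemma together with the discrete Poincaré inequality has been proved already, and the only remaining ingredient is the elementary fact that a square linear system with trivial kernel is invertible. The one small point worth being explicit about in the write-up is that the boundary penalty terms in the definition of $\Vert \cdot \Vert_{\DG,i}$, combined with \Cref{lemma:Poincare}, prevent a nonzero constant mode from having zero DG norm; without invoking Poincaré one would otherwise have to verify separately that $\Vert \cdot \Vert_{\DG}$ is a norm (and not merely a seminorm) on $V_h^p$.
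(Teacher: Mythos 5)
Your proposal is correct and follows essentially the same route as the paper, which simply notes that the problem is finite-dimensional so coercivity immediately gives existence and uniqueness. Your additional observation that \Cref{lemma:Poincare} is what upgrades $\Vert\cdot\Vert_{\DG}$ from a seminorm to a norm on $V_h^p$ is a detail the paper leaves implicit, but it is not a different argument.
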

With \Cref{lemma:Poincare}, we easily derive a stability bound for the numerical solution.
\begin{lemma}[Stability]
Let $u_h \in V_h^p$ solve \eqref{eq:scheme}. Then, 
\begin{equation}
\|u_h\|_{\DG} \lesssim (\sum_{i=1}^N \|f\|_{L^2(\Omega_i)}^2)^{1/2}. 
\end{equation}
\end{lemma}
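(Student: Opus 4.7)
The plan is to combine the coercivity bound from \Cref{lem:coerc}, the scheme \eqref{eq:scheme} tested against $u_h$ itself, and the discrete Poincar\'e inequality from \Cref{lemma:Poincare}. These three ingredients are exactly the standard trio for establishing a priori stability in a coercive finite-dimensional problem, and every piece is already at our disposal.

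First I would invoke \Cref{lem:coerc} to get $\Vert u_h \Vert_{\DG}^2 \lesssim a(u_h, u_h)$. Then, setting $v_h = u_h$ in \eqref{eq:scheme}, the right-hand side becomes $\sum_{i=1}^N \int_{\Omega_i} f_i\, u_h|_{\Omega_i}$, which by Cauchy--Schwarz on each $\Omega_i$ and then on the sum over $i$ is bounded by
\[
\Bigl(\sum_{i=1}^N \Vert f \Vert_{L^2(\Omega_i)}^2\Bigr)^{1/2}
\Bigl(\sum_{i=1}^N \Vert u_h \Vert_{L^2(\Omega_i)}^2\Bigr)^{1/2}.
\]
The second factor is then controlled by $\Vert u_h \Vert_{\DG}$ thanks to \Cref{lemma:Poincare} (applied to $u_h \in V_h^p \subset H^1(\mathcal{T}_h)$). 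Chaining these bounds gives $\Vert u_h \Vert_{\DG}^2 \lesssim \bigl(\sum_i \Vert f \Vert_{L^2(\Omega_i)}^2\bigr)^{1/2} \Vert u_h \Vert_{\DG}$, and dividing by $\Vert u_h \Vert_{\DG}$ (trivial if it is zero) yields the claimed estimate.

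There is no genuine obstacle here since all nontrivial work has already been packaged into the coercivity lemma and the hypergraph Poincar\'e inequality; the only mild subtlety is that Poincar\'e's inequality is stated for the broken space $H^1(\mathcal{T}_h)$, so one must note that $u_h \in V_h^p \subset H^1(\mathcal{T}_h)$ and that the implicit constant in \Cref{lemma:Poincare} is independent of $h$, which is exactly what makes the final bound uniform in the mesh.
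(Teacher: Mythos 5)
Your argument is correct and is precisely the one the paper intends: the paper's own proof simply cites coercivity (\Cref{lem:coerc}), Cauchy--Schwarz, and the discrete Poincar\'e inequality (\Cref{lemma:Poincare}), and you have filled in exactly those steps in the standard order. Nothing further is needed.
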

\begin{proof}
This is an easy consequence of the coercivity property \Cref{lem:coerc}, Cauchy--Schwarz's inequality and Poincar\'e's inequality \Cref{lemma:Poincare}.
\end{proof}

\section{Convergence} 
\label{sec:conv}
\subsection{A priori error bounds under $H^2$ regularity}

In this section, we assume that the exact solution $u$ is regular enough, namely $u|_{\Omega_i}\in H^2(\Omega_i)$ for $1\leq i\leq N$.  This assumption is certainly valid for the case of the edge-network (see \Cref{rem:consist}) if $\kappa_i$ is smooth enough.  For the plane-network, the solution may not be in $H^2$ on each domain $\Omega_i$ and a low regularity assumption is treated in \Cref{sec:lowregular}.

\begin{lemma}
Assume the exact solution $u|_{\Omega_i}$ belongs to $H^2(\Omega_i)$. Then, Galerkin orthogonality holds:
\[
\forall v_h \in V_h^p, \quad a(u-u_h, v_h) = 0.
\]
\end{lemma}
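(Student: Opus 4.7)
The plan is to obtain Galerkin orthogonality as a consequence of consistency: if we can show that $a(u, v_h) = \sum_{i=1}^N \int_{\Omega_i} f_i v_h$ for every $v_h \in V_h^p$, then subtracting the scheme \eqref{eq:scheme} immediately gives $a(u-u_h, v_h) = 0$. For the edge-network, consistency is already recorded in \eqref{eq:1Dconsistency} of \Cref{rem:consist}, so the only real work is for the plane-network.

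For the plane-network, I would essentially formalize the derivation carried out in \Cref{sec:deriv}. First I would multiply \eqref{eq:pdei2d} by $v_h$ and apply Green's theorem on each $K \in \mathcal{T}_{h,i}$; since $u_i \in H^2(\Omega_i)$, the flux $\kappa_i \nabla u_i \cdot \bfn_F$ has a well-defined $L^2$ trace on each edge $F$ and its jump across interior edges $F \in \mathcal{F}_{h,i}^{\mathrm{int}}$ vanishes, so the single-valued flux equals its average $\{\kappa_i \nabla u_i \cdot \bfn_F\}$. Summing over elements in $\Omega_i$ and over $i$, the remaining boundary integrals on each $\Omega_i$ split into contributions over $\mathcal{F}_{h,i}^{\mathrm{int}}$, $\mathcal{F}_{h,i}^{\partial\mathcal{G}}$, and the pieces of the bifurcation set $\mathcal{F}_h^\gamma$. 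The first two reproduce (up to symmetrization and penalty) the form $a_i(u_i, v_h|_{\Omega_i})$, and the bifurcation contributions can be rewritten using \Cref{lem:modjump} as $\sum_\gamma \sum_{F \in \mathcal{F}_h^\gamma} \int_F \avg{\bfsigma(u)} \odot \jump{v_h}$.

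Next I would add back the symmetrization and penalty terms in $a(u, v_h)$ and argue they all vanish when the first argument is the exact solution. Across each interior edge $F \in \mathcal{F}_{h,i}^{\mathrm{int}}$, $u_i \in H^2(\Omega_i) \subset H^1(\Omega_i)$ has zero jump, so both $[u_i]|_F = 0$ and the symmetrization term $\{\kappa_i \nabla v_h \cdot \bfn_F\}[u_i]$ and the penalty $\eta_{F,i} h_F^{-1}[u_i][v_h]$ vanish. On $\partial\mathcal{G}$, $u = 0$ by \eqref{eq:bc}, killing the boundary symmetrization and penalty terms. At each bifurcation $\gamma$, the continuity condition \eqref{eq:coupled_interfaceB} gives $\jump{u}_{(i,j)} = 0$ for every $(i,j) \in D_\gamma$, so both the $\avg{\bfsigma(v_h)} \odot \jump{u}$ symmetrization and the $\eta_\gamma h_F^{-1} \jump{u} \odot \jump{v_h}$ penalty contributions vanish.

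Combining these observations yields $a(u, v_h) = \sum_{i=1}^N \int_{\Omega_i} f_i v_h$, and orthogonality follows by subtracting \eqref{eq:scheme}. The main thing to be careful about is not a conceptual obstacle but rather bookkeeping: making sure that every term added in passing from the integration-by-parts identity to the full bilinear form \eqref{eq:a2Ddef} — symmetrization at interior edges, at boundary edges, and at bifurcations, together with the three penalty terms — is genuinely annihilated by either $H^2$ regularity of $u$, the Dirichlet condition \eqref{eq:bc}, or the interface continuity \eqref{eq:coupled_interfaceB}, and that the bifurcation flux term produced by \Cref{lem:modjump} matches exactly the $\avg{\bfsigma(u)} \odot \jump{v_h}$ term in \eqref{eq:a2Ddef} through the short-hand \eqref{eq:short_hand_notation}.
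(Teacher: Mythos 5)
Your proposal is correct and follows essentially the same route as the paper: the paper's own proof is a one-line appeal to the derivation in \Cref{sec:deriv} (consistency under local $H^2$ regularity, with the bifurcation flux handled by \Cref{lem:modjump} and the symmetrization/penalty terms annihilated by \eqref{eq:bc} and \eqref{eq:coupled_interfaceB}), which is exactly the argument you spell out in detail before subtracting \eqref{eq:scheme}.
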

\begin{proof}
With the local $H^2$ regularity, the derivation of the scheme in \Cref{sec:deriv} is mathematically justified. The exact solution satisfies the scheme, which immediately yields the Galerkin orthogonality.
\end{proof}
Convergence of the method is obtained by deriving optimal a priori error bounds in the DG norm.
\begin{theorem}\label{thm:err_estimate_smooth}
Assume the exact solution $u\in H_0^1(\mathcal{G})$ is such that $u|_{\Omega_i}$ belongs to $H^{s+1}(\Omega_i)$ for $s \geq 1$. 
Then, under \Cref{hyp1}, we have
 \begin{align*}
\Vert u-u_h\Vert_{\mathrm{DG}}
\lesssim h^{\min(p,s)} \left(\sum_{i=1}^N \Vert u_i \Vert_{H^{s+1}(\Omega_i)}^2\right)^{1/2}.
\end{align*}
\end{theorem}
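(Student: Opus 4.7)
The plan is to run the classical coercivity plus Galerkin orthogonality argument, adapted to the hypergraph bilinear form. Let $I_h u \in V_h^p$ denote a standard piecewise polynomial interpolant of $u$ (for instance the elementwise $L^2$ projection onto $\mathbb{P}_p(K)$, or a Scott--Zhang type operator applied within each $\Omega_i$), and split
\[
u - u_h = \eta + \xi_h, \qquad \eta := u - I_h u, \qquad \xi_h := I_h u - u_h \in V_h^p.
\]
Since the hypothesis $u_i \in H^{s+1}(\Omega_i) \subset H^2(\Omega_i)$ delivers Galerkin orthogonality, \Cref{lem:coerc} applied to $\xi_h \in V_h^p$ gives
\[
\|\xi_h\|_{\DG}^2 \lesssim a(\xi_h, \xi_h) = a(I_h u - u, \xi_h) + a(u - u_h, \xi_h) = -a(\eta, \xi_h).
\]
A triangle inequality $\|u - u_h\|_{\DG} \le \|\eta\|_{\DG} + \|\xi_h\|_{\DG}$ then reduces everything to bounds on $\|\eta\|_{\DG}$ and on the pairing $|a(\eta, \xi_h)|$.

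To control that pairing I would introduce the augmented norm
\[
\|w\|_{\DG,*}^2 := \|w\|_{\DG}^2 + \sum_{i=1}^N \sum_{F \in \mathcal{F}_{h,i}^{\mathrm{int}} \cup \mathcal{F}_{h,i}^{\partial\mathcal{G}}} h_F \|\{\kappa_i \nabla w_i \cdot \bfn_F\}\|_{L^2(F)}^2 + \sum_{\gamma\in\Gamma} \sum_{F\in\mathcal{F}_h^\gamma} h_F \sum_{(i,j)\in D_\gamma} \|\avg{\bsigma(w)}_{(i,j)}\|_{L^2(F)}^2,
\]
together with its obvious edge-network analogue. A term-by-term Cauchy--Schwarz, splitting $\int_F \avg{\bsigma(w)} \odot \jump{\xi_h}$ as $(h_F^{1/2}\avg{\bsigma(w)}) \cdot (h_F^{-1/2}\jump{\xi_h})$, absorbs the jump factors into $\|\xi_h\|_{\DG}$ through the penalty weights and yields $|a(w, \xi_h)| \lesssim \|w\|_{\DG,*}\|\xi_h\|_{\DG}$ for any $w$ with locally $H^2$ regularity. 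The bifurcation sums carry at most $|D_\gamma|$ terms per $\gamma$, and the uniform bound on $\mathrm{card}(I_\gamma)$ absorbs the combinatorial prefactor $1/\mathrm{card}(I_\gamma)$ baked into $\odot$.

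It remains to show $\|\eta\|_{\DG,*} \lesssim h^{\min(p,s)} \bigl(\sum_i \|u_i\|_{H^{s+1}(\Omega_i)}^2\bigr)^{1/2}$. Volume, interior face, and boundary contributions follow from elementwise interpolation estimates combined with the trace inequality $\|w\|_{L^2(\partial K)}^2 \lesssim h_K^{-1}\|w\|_{L^2(K)}^2 + h_K \|\nabla w\|_{L^2(K)}^2$, which is standard. For the bifurcation penalty term I would use that $u \in H_0^1(\mathcal{G})$ implies $\jump{u}_{(i,j)}|_\gamma = 0$, so $\jump{\eta}_{(i,j)} = \eta|_{\Omega_i} - \eta|_{\Omega_j}$ factors into a sum of traces of $\eta_i$ and $\eta_j$ on the two adjacent mesh elements straddling $\gamma$; the flux-average piece $\avg{\bsigma(\eta)}_{(i,j)}$ is treated identically, on each side of $\gamma$ the regularity $u_i \in H^{s+1}(\Omega_i)$ supplying the trace of $\kappa_i \nabla \eta_i \cdot \bfn_i$. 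Assembling yields the claimed rate $h^{\min(p,s)}$.

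The main obstacle is the bookkeeping at bifurcations in the continuity step: the generalized jumps and averages run over all ordered pairs in $D_\gamma$, so one must carefully check that the $h_F^{1/2} \cdot h_F^{-1/2}$ splitting in Cauchy--Schwarz produces exactly the weights used in $\|\cdot\|_{\DG}$ for each pair, and that the prefactor $1/\mathrm{card}(I_\gamma)$ from the definition of $\odot$ does not destroy this matching. Once the combinatorics are tamed by the uniform bounds on $\mathrm{card}(I_\gamma)$ and $\mathrm{card}(D_\gamma)$, the remainder is structurally identical to a one-domain IPDG analysis as in \cite{riviere2008discontinuous}.
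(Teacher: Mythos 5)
Your proposal is correct and follows the same skeleton as the paper (coercivity of $a$, Galerkin orthogonality from the $H^2$ regularity, boundedness of $a$ against an augmented norm, then approximation estimates), but it diverges in one meaningful choice: the interpolant. The paper takes $\tilde{u}\in V_h^p\cap\mathcal{C}(\overline\Omega)$ — e.g.\ the Lagrange interpolant, which is single-valued at the matching bifurcation meshes — so that $\jump{u-\tilde u}_{(i,j)}=0$ on every $F\in\mathcal{F}_h^\gamma$; the $\epsilon$-symmetrization and penalty terms at the bifurcations then vanish identically and only the single flux-average term $T=-\sum_\gamma\sum_F\int_F\avg{\bsigma(u-\tilde u)}\odot\jump{u_h-\tilde u}$ survives, which is estimated exactly as you describe. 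Your interpolant (elementwise $L^2$ projection or per-$\Omega_i$ Scott--Zhang) is not continuous across $\Gamma$, so you must additionally bound $\sum_\gamma\sum_F\frac{\eta_\gamma}{h_F}\int_F\jump{\eta}\odot\jump{\xi_h}$ and $\epsilon\sum_\gamma\sum_F\int_F\avg{\bsigma(\xi_h)}\odot\jump{\eta}$; your observation that $\jump{u}_{(i,j)}=0$ reduces $\jump{\eta}_{(i,j)}$ to traces of $\eta_i,\eta_j$ on the two elements abutting $F$, each controlled at rate $h^{\min(p,s)+1/2}$ by the trace-plus-interpolation estimate, and the discrete trace inequality handles $\avg{\bsigma(\xi_h)}$, so both extra terms close at the right rate. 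What the paper's choice buys is brevity (no extra bifurcation terms); what yours buys is independence from the existence of a globally continuous interpolant on the hypergraph, at the cost of the additional bookkeeping you correctly flag.
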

\begin{proof}
The proof follows a standard argument; the only difference is the addition of  the bifurcation terms.  We provide only an outline of the proof.

Let $\tilde{u}\in V_h^p\cap \mathcal{C}(\overline{\Omega})$ be an optimal approximation of $u$. For instance, one may choose for $\tilde{u}$ the Lagrange interpolant of $u$.
We have by coercivity and Galerkin orthogonality
\[
\Vert u_h -\tilde{u}\Vert_{\DG}^2 
\lesssim a(u_h-\tilde{u},u_h-\tilde{u})
= a(u-\tilde{u}, u_h -\tilde{u}).
\]
The first term in the right-hand side is the sum of classical DG formulations; it is handled by a standard DG error analysis. We then obtain
\[
\sum_{i=1}^N a_i(u-\tilde{u}, u_h - \tilde{u})
\lesssim  \Vert u_h - \tilde{u}\Vert_{\DG} \, h^{\min(p,s)} \left(\sum_{i=1}^N \vert u \vert^2_{H^{s+1}(\Omega_i)}\right)^{1/2}.
\]
Continuity of $u$ and $\tilde{u}$ on any $\gamma\in\Gamma$ imply that most bifurcation terms vanish, except for 
\begin{align*}
T & = -\sum_{\gamma\in\Gamma} \sum_{F\in\mathcal{F}_h^\gamma}
\int_F \avg{\bfsigma(u-\tilde{u})} \odot \jump{u_h -\tilde{u}}\\
&=
\sum_{\gamma\in\Gamma} \sum_{F\in\mathcal{F}_h^\gamma}
\frac{1}{\mathrm{card}(I_\gamma)}
\sum_{(i,j)\in D_\gamma}
\int_F \avg{\bfsigma(u-\tilde{u})}_{(i,j)} \jump{u_h -\tilde{u}}_{(i,j)}.
\end{align*}
We have, since $\mathrm{card}(I_\gamma)$ is uniformly bounded
\begin{align*}
T \lesssim &  \Big( 
\sum_{\gamma\in\Gamma} \sum_{F\in\mathcal{F}_h^\gamma}
\sum_{(i,j)\in D_\gamma} \frac{\eta_\gamma}{h_F}
\Vert [u_h-\tilde{u}]_{(i,j)}\Vert_{L^2(F)}^2
\Big)^{1/2} \\
& \quad \times \Big( 
\sum_{\gamma\in\Gamma} \sum_{F\in\mathcal{F}_h^\gamma}
\sum_{(i,j)\in D_\gamma} h_F
\Vert \avg{\bfsigma(u-\tilde{u})}_{(i,j)}\Vert_{L^2(F)}^2
\Big)^{1/2}.
\end{align*}
The first factor is bounded above by $\Vert u_h -\tilde{u}\Vert_{\DG}$. The second factor is treated with trace inequalities and approximation theory, by considering separately each pair $(i,j)\in D_\gamma$ and the mesh elements $K^-\in \mathcal{T}_{h,i}$ and
$K^+\in\mathcal{T}_{h,j}$ that share the face $F\in \mathcal{F}_h^\gamma$.
\end{proof}

\subsection{A priori error bounds under low regularity for plane-networks}
\label{sec:lowregular}

Since we do not know a priori that $u|_{\Omega_i} \in H^2(\Omega_i)$ for the plane-network case, Galerkin orthogonality does not hold and therefore the error analysis becomes non-standard.  In this section, we fix $0<s<1$ and we work with the following space 
\begin{equation}
V^{1+s} = \{ v \in H^1_0(\mathcal{G}):  \; v_i\in H^{1+s} (\Omega_i), \; \; \nabla \cdot \bm \sigma_i(v_i) \in L^2(\Omega_i) ,\; \; 1 \leq i \leq N\}. 
\end{equation}
In what follows, a weak meaning for the normal trace of the fluxes $\bm \sigma_i (v_i) $ for $v \in V^{1+s}$ is defined following \cite{ern2021quasi}. This will allow us to show a weak notion of consistency by extending the form $a(\cdot,\cdot)$ to the space $V^{1+s}\times V_h^p$. 

For $K  \in \mathcal{T}_h$, denote by $\bm n_K$ the unit normal vector outward of $K$ and define $\mathcal{F}_K$ as the set of edges belonging to $\partial K$. Define $\rho = 4 /(2-2s)$ and let $\rho'$ be its conjugate, $1/\rho + 1/\rho' = 1$.  Observe that $u_i \in H^{1+s}(K)$ implies that $\nabla u_i \in H^{s}(K)$, and by Sobolev embedding~\cite[Theorem 2.31]{Ern:booki}, this implies that $\nabla u_i \in L^{\rho}(K)^2$. Hence if $\kappa_i \in L^{\infty}(\Omega_i)$, $\bm \sigma_i (u_i) \in L^\rho (\Omega_i)^2$. We consider the lifting operator~\cite[Lemma 3.1]{ern2021quasi}, that satisfies for
all $K\in\mathcal{T}_h$ and $F\in\mathcal{F}_K$:
\begin{equation}
L^K_F: W^{\frac{1}{\rho}, \rho'}(F) \rightarrow W^{1, \rho'}(K), \quad  \gamma_0 (L_F^K(\phi))\vert_{\partial K}  = \begin{cases} \phi & \mathrm{on} \,\,\, F, 
\\ 
0 & \mathrm{otherwise} 
\end{cases},
\end{equation}
where $\gamma_0: W^{1,\rho'}(K) \rightarrow W^{1-1/\rho',\rho'}(\partial K)$ is the trace operator. 
This lifting operator is utilized to give weak meaning to the normal traces. For each $K \in \mathcal{T}_h$, the normal trace of $\bm \sigma_i(u_i)$ on an edge $F \in \mathcal{F}_K$ is defined as a member of the dual space of $W^{1/\rho, \rho'}(F)$ as follows
\begin{equation}
\langle (\bm \sigma_i(u_i)\cdot \bm n_K)_{\vert F}, \phi\rangle_{F} := \int_K  (\bm \sigma_i(u_i) \cdot \nabla L_F^K(\phi) + (\nabla \cdot \bm \sigma_i( u_i)) \, L_F^K(\phi)  ), \quad \forall\phi \in  W^{1/\rho, \rho'}(F). \label{eq:def_duality_pair}
\end{equation}
The above is well defined since $\bm \sigma_i(u_i) \in L^{\rho}(K)$, $\nabla \cdot \bm \sigma_i(u_i)\in L^2(K)$ and $L_F^K(\phi) \in L^2(K)$ because $W^{1,\rho'} (K) \hookrightarrow L^2(K)$ by  Sobolev embedding~\cite[Theorem 2.31]{Ern:booki}. We will also require the following space: for $1 \leq r < \infty$ 
\begin{align}
  W^{1,r}_0(\mathcal{G}) &=   \{ v\vert_{\Omega_i} =  v_i \in W^{1,r}(\Omega_i), \; 
1\leq i\leq N \} \cap H^1_0(\mathcal{G}). \label{eq:def_sobolev_graph}
\end{align}


The next step is to consider a suitable extension of the form $a$ to the space $V^{1+s} \times V_h^p$. Let $\mathcal{T}_F$ denote the elements $K$ sharing the edge $F$ and let $\epsilon_{K,F} = (\bm n_K \vert_F) \cdot \bm n_F$. Clearly, $\epsilon_{K,F}$ takes the value $\pm 1$ depending on the orientation of $\bm n_K$ with respect to $\bm n_F$. Observe that if $F \in \mathcal{F}_{h}^\gamma$  then  the cardinality of $\mathcal{T}_F$ is equal to  $\mathrm{card}(I_\gamma)$.
For each edge that is interior to a domain, $\  F \in \cup_{i = 1}^{N} \mathcal{F}_{h,i}^{\mathrm{int}},$ define the form $\tau_F: V^{1+s} \times V_h^p \rightarrow \mathbb{R}$ as follows
\begin{equation}
\tau_F (v, w_h) = \frac12
\sum_{K \in \mathcal{T}_F} \epsilon_{K,F} \langle (\bm \sigma(v)\vert_K \cdot \bm n_K) \vert_F, [w_h]  \rangle_F.  
\end{equation} 
For each boundary edge $F\in \cup_{i=1}^N \mathcal{F}_{h,i}^{\partial\mathcal{G}}$, we simply define
\begin{equation}
\tau_F (v, w_h) = 
\sum_{K \in \mathcal{T}_F}  \langle (\bm \sigma(v)\vert_K \cdot \bm n_K) \vert_F, w_h  \rangle_F.  
\end{equation} 
For each edge $F$ belonging to a bifurcation segment, namely $F \in \mathcal{F}_h^\gamma$, we modify the above definition since  $\mathrm{card}(I_\gamma) > 2$ and we define 
\begin{equation}
 \tau_F^\gamma (v, w_h) = \frac{1}{\mathrm{card}(I_{\gamma})}
 \sum_{(i,j) \in D_\gamma} \sum_{K \in \mathcal{T}_F \cap (\Omega_i \cup \Omega_j) } \epsilon_{K,F} \langle (\bm \sigma (v)\vert_K \cdot \bm n_K) \vert_F, [w_h]_{(i,j)}  \rangle_F.  
\end{equation} 
The extension of the form $a$ can now be defined as $\mathcal{A}: V^{1+s} \times V_h^p \rightarrow \mathbb{R}$: 
\begin{equation}
\mathcal{A}(v,w_h) = \sum_{i=1}^N \sum_{K \in \mathcal{T}_{h,i}} \int_K \bm \sigma(v) \cdot \nabla w_h   - \sum_{i=1}^N \sum_{F \in \mathcal{F}_{h,i}^{\mathrm{int}} \cup \mathcal{F}_{h,i}^{\partial \mathcal{G}}} \tau_F(v,w_h) -  \sum_{\gamma \in \Gamma} \sum_{F \in \mathcal{F}_h^\gamma } \tau_F^\gamma (v, w_h).  \label{eq:extended_form_A}
\end{equation}
\begin{lemma}\label{lemma:froms_agree_discrete}
    For any $v , w  \in V_h^p$, the following identities hold 
\begin{align}\label{eq:identity_eta_interior}
\forall 1\leq i\leq N, \quad \sum_{F \in \mathcal{F}_{h,i}^{\mathrm{int}}} \tau_F(v ,w ) 
&= 
\sum_{F \in \mathcal{F}_{h,i}^{\mathrm{int}}}\int_F \{\bfsigma(v) \cdot\bfn_F\} [w], \\ 
\label{eq:identity_eta_bdy}
\forall 1\leq i\leq N, \quad \sum_{F \in \mathcal{F}_{h,i}^{\partial \mathcal{G}}} \tau_F(v ,w ) 
&= 
\sum_{F \in \mathcal{F}_{h,i}^{\partial \mathcal{G}}} \int_F \bfsigma(v) \cdot\bfn_F\, w, \\ 
\forall \gamma\in\Gamma, \quad \sum_{F \in \mathcal{F}_h^\gamma } \tau_F^\gamma (v, w)  
 & = \sum_{F\in\mathcal{F}_h^\gamma}
\int_F \avg{\bfsigma(v)} \odot \jump{w} .  \label{eq:identity_eta_interface}
\end{align}
\end{lemma}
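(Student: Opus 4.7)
The plan is to show that for discrete arguments $v\in V_h^p$ the duality pairings in the definitions of $\tau_F$ and $\tau_F^\gamma$ collapse to ordinary $L^2$ integrals of the pointwise normal trace $\bfsigma(v)|_K\cdot\bfn_K$ against the jump/test function on $F$; once this reduction is established, each of the three identities is a matter of matching signs and averaging conventions with the discrete jump/average operators.

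The first step is to evaluate $\langle(\bfsigma(v)|_K\cdot\bfn_K)|_F,\phi\rangle_F$ for $v\in V_h^p$. On a single element $K$, $\bfsigma(v)|_K=\kappa\nabla v|_K$ is a (piecewise) smooth field with $L^2$ divergence, so the classical Green's identity applies to it against the lift $L_F^K(\phi)\in W^{1,\rho'}(K)\hookrightarrow H^1(K)$. Since $\gamma_0(L_F^K(\phi))$ vanishes on $\partial K\setminus F$ and equals $\phi$ on $F$, the defining formula \eqref{eq:def_duality_pair} reduces to
\begin{equation*}
\langle(\bfsigma(v)|_K\cdot\bfn_K)|_F,\phi\rangle_F=\int_F(\bfsigma(v)|_K\cdot\bfn_K)\,\phi .
\end{equation*}
This is the engine of the proof: every duality bracket appearing in the $\tau_F$'s is replaced by an ordinary face integral of the normal trace.

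With this in hand, \eqref{eq:identity_eta_interior} and \eqref{eq:identity_eta_bdy} follow almost immediately. For an interior edge inside $\Omega_i$, the two elements $K\in\mathcal{T}_F$ satisfy $\epsilon_{K,F}\bfn_K|_F=\bfn_F$, so $\sum_{K\in\mathcal{T}_F}\epsilon_{K,F}(\bfsigma(v)|_K\cdot\bfn_K)=2\{\bfsigma(v)\cdot\bfn_F\}$, and the prefactor $1/2$ in the definition of $\tau_F$ yields \eqref{eq:identity_eta_interior} after summing over $F$. For a boundary edge, $\mathcal{T}_F=\{K\}$ with $\bfn_K|_F=\bfn_F=\bfn_i$ by construction, so \eqref{eq:identity_eta_bdy} is direct.

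The bifurcation identity \eqref{eq:identity_eta_interface} is the only step requiring care, and I expect it to be the main obstacle. Fix $F\in\mathcal{F}_h^\gamma$ and a pair $(i,j)\in D_\gamma$; the inner sum runs over exactly two elements $K_i\subset\Omega_i$ and $K_j\subset\Omega_j$, whose outward normals on $F$ coincide with the outward conormals $\bfn_i$ and $\bfn_j$ of $\Omega_i$ and $\Omega_j$ at $\gamma$. Choosing the orientation of $\bfn_F$ pairwise so that $\epsilon_{K_i,F}\bfn_{K_i}|_F=\bfn_i$ and $\epsilon_{K_j,F}\bfn_{K_j}|_F=-\bfn_j$, the first step gives that the inner sum equals
\begin{equation*}
\int_F\bigl(\bfsigma_i(v_i)\cdot\bfn_i-\bfsigma_j(v_j)\cdot\bfn_j\bigr)[w]_{(i,j)}=\int_F\avg{\bfsigma(v)}_{(i,j)}\,[w]_{(i,j)} .
\end{equation*}
Summing over $(i,j)\in D_\gamma$ with the factor $1/\mathrm{card}(I_\gamma)$ and comparing with the short-hand \eqref{eq:short_hand_notation} produces \eqref{eq:identity_eta_interface}. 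The delicate point is verifying that the sign convention on $\bfn_F$ (which is not globally well-defined at a bifurcation, since the planes carrying the distinct $\Omega_i$'s meet nontrivially in $\mathbb{R}^3$) can be made consistent pair by pair so that the two outward conormals enter with opposite signs; once this bookkeeping is settled, the lemma is entirely algebraic.
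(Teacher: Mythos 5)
Your proof follows essentially the same route as the paper's: reduce each duality pairing to a plain face integral via Green's identity on a single element (using that the lift's trace is supported only on $F$), then combine the contributions with the signs $\epsilon_{K,F}$ and sum over the pairs in $D_\gamma$ with the $1/\mathrm{card}(I_\gamma)$ factor. Your explicit handling of the pairwise sign convention at bifurcation edges — where no single $\bfn_F$ can serve all of $I_\gamma$, and one must take $\epsilon=+1$ for the $\Omega_i$-element and $\epsilon=-1$ for the $\Omega_j$-element of each pair $(i,j)$ — makes precise a point the paper leaves implicit, and is exactly what its computation requires.
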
 
\begin{proof}
The proof of \eqref{eq:identity_eta_interior}-\eqref{eq:identity_eta_bdy} is from~\cite[Lemma 3.3]{ern2021quasi}. We provide the similar proof for \eqref{eq:identity_eta_interface} for completeness. Since $v \in V_h^p$, $\bm \sigma(v)$ is smooth when restricted to a single element $K \in \mathcal{T}_h$ in $\Omega_i$. Using Green's theorem in \eqref{eq:def_duality_pair}, we obtain that  
\begin{align}
    \langle (\bm \sigma_i(v_i)\vert_K \cdot \bm n_K) \vert_F, [w_h]_{(i,j)}  \rangle_F  &   = \int_{\partial K} \bm \sigma (v)\vert_{K} \cdot \bm n_K  \gamma_0(L_F^K([w_h]_{(i,j)})) \\ 
    & =  \int_{F} \bm \sigma (v)\vert_{K}  \cdot \bm n_K [w_h]_{(i,j)},  \nonumber
\end{align}
where the last equality holds since  the trace of $L_F^K([w_h])$ is nonzero only on $F$. Recalling that $\epsilon_{K,F} = (\bm n_K) \vert_F \cdot \bm n_F$, we obtain that 
\begin{align}
\sum_{K \in \mathcal{T}_F \cap (\Omega_i \cup \Omega_j)} \epsilon_{K,F} \langle (\bm \sigma(v)\vert_K \cdot \bm n_K) \vert_F, [w_h]_{(i,j)}  \rangle_F  =  \int_F \{\bm \sigma (v) \}_{(i,j)} [w_h]_{(i,j)}. 
\end{align}
Summing the above over $(i,j) \in D_\gamma$, dividing by $\mathrm{card}(I_\gamma)$ and recalling the notation \eqref{eq:short_hand_notation} yields the result. 
\end{proof}

Similar to the proofs in \cite{ern2021quasi}, we  utilize the mollification operators constructed in \cite{ern2016mollification,schoberl2008} for any $0<1<\delta$: 
\begin{equation}
\mathcal K_{\delta,i}^\mathfrak{d}: L^1(\Omega_i)^2 \rightarrow C^{\infty}(\overline{\Omega_i})^2,  \quad \mathcal K_{\delta,i}^\mathfrak{b} : L^1(\Omega_i) \rightarrow C^{\infty}(\overline{\Omega_i}), 
\end{equation}
which satisfy the commutativity property for all $ \bm \tau \in L^1(\Omega_i)^2$ with $\nabla \cdot \bm \tau \in L^1(\Omega_i)$
\begin{equation}
    \nabla \cdot \mathcal K_{\delta,i}^\mathfrak{d} (\bm \tau) = \mathcal K_{\delta,i}^\mathfrak{b}(\nabla \cdot \bm \tau). \label{eq:commutative_prop_moll} 
\end{equation} 
The superscript $\mathfrak{d}$ in the operator $\mathcal K_{\delta,i}^\mathfrak{d}$ refers to the divergence whereas the superscript $\mathfrak{b}$ in the operator $\mathcal K_{\delta,i}^\mathfrak{b}$ refers to the broken space.
We extend these operators to the hypergraph by denoting $\mathcal{K}^{\mathfrak{b}}_{\delta}$ and $\mathcal{K}^{\mathfrak{d}}_{\delta}$ the operators such as:
$$ \mathcal{K}^\mathfrak{d}_{\delta} (\bm \tau ) \vert_{\Omega_i} =  \mathcal{K}^\mathfrak{d}_{\delta,i} (\bm \tau \vert_{\Omega_i}), \quad  \mathcal{K}^\mathfrak{b}_{\delta} (w) \vert_{\Omega_i} =  \mathcal{K}^\mathfrak{b}_{\delta,i} (w \vert_{\Omega_i}) , \quad 1 \leq i \leq N. $$ 

From \cite[Corollary 3.4]{ern2016mollification}, we find that for $\bm \tau \in L^\rho(\mathcal{G})^2$ and for any $w \in L^2(\mathcal{G})$
\begin{align}
\lim_{\delta \rightarrow 0} \| \mathcal{K}_{\delta}^\mathfrak{d} \bm \tau - \bm \tau \|_{L^\rho(\mathcal{G})} = 0  , \quad \lim_{\delta \rightarrow 0} \| \mathcal{K}_{\delta}^\mathfrak{b} (w) - w \|_{L^2(\mathcal{G})} = 0 .  \label{eq:convergence_mollifiers}
\end{align}

In the following lemma, we assume that the weak formulation \eqref{eq:weakformulation} satisfied by the weak solution $u$ is also true for a weaker test space: see \eqref{eq:weak_Kirchhoff_condition}. 

\begin{lemma}\label{lemma:weak_Kirchhoff_condition}
Suppose the weak solution $u$ to \eqref{eq:weakformulation} is such that $u \in V^{1+s}$ and that it satisfies 
\begin{equation} 
\label{eq:weak_Kirchhoff_condition}
\sum_{i=1}^N (\bm \sigma_i(u_i), \nabla v_i)_{\Omega_i} = \sum_{i=1}^N (f_i, v_i)_{\Omega_i}, \quad \forall v \in W^{1,\rho'}_0(\mathcal{G}). 
\end{equation}
%
Then, for any $\gamma \in \Gamma$, $ F \in \mathcal{F}_h^\gamma$ and  $w_h \in V_h^p$, we have  
\begin{equation}
\lim_{\delta \rightarrow 0 }  \sum_{i \in I_\gamma} \int_F (\mathcal{K}_{\delta}^\mathfrak{d} \bm \sigma (u)) \vert_{\Omega_i} \cdot \bfn_i   \, w_h \vert_{\Omega_i} =  \lim_{\delta \rightarrow 0} \int_F \{ \mathcal{K}_{\delta}^\mathfrak{d} (\bm \sigma (u)) \} \odot [w_h].
\label{eq:smoothgener}
\end{equation}
\end{lemma}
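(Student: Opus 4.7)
The plan is to rewrite the difference between the two sides of \eqref{eq:smoothgener} algebraically and then show that the remainder vanishes in the limit $\delta\to 0$ by combining the weak Kirchhoff assumption \eqref{eq:weak_Kirchhoff_condition} with the convergence \eqref{eq:convergence_mollifiers} of the mollifiers. First, I would repeat the pointwise computation in the proof of \Cref{lem:modjump} \emph{without} assuming Kirchhoff's law, obtaining the identity
\[
\sum_{i\in I_\gamma}\bm{\tau}_i\cdot\bfn_i\, v_i \;-\; \avg{\bm{\tau}}\odot\jump{v} \;=\; \frac{1}{\mathrm{card}(I_\gamma)}\Big(\sum_{i\in I_\gamma}\bm{\tau}_i\cdot\bfn_i\Big)\Big(\sum_{j\in I_\gamma}v_j\Big),
\]
valid pointwise on $\gamma$ for any vector field $\bm{\tau}$ and scalar $v$ having one-sided traces on each $\Omega_i$. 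Applying it to $\bm{\tau}=\mathcal K_\delta^{\mathfrak d}\bm{\sigma}(u)$, which is smooth on each $\overline{\Omega_i}$ so that all traces are classical, and to $v=w_h$, and integrating over $F$, reduces the lemma to proving
\[
\mathcal R_\delta \;:=\; \int_F H_\delta\, W_h \;\longrightarrow\; 0,\qquad H_\delta := \sum_{i\in I_\gamma}\mathcal K_\delta^{\mathfrak d}\bm{\sigma}(u)|_{\Omega_i}\cdot\bfn_i,\qquad W_h := \sum_{j\in I_\gamma}w_h|_{\Omega_j}.
\]

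Next I would derive a uniform-in-$\delta$ dual bound on $H_\delta$: integrating by parts element by element on each $K\in\mathcal T_{h,i}$ adjacent to $F$ against the lifting $L_F^K(\psi)$ from \eqref{eq:def_duality_pair}, and using the commutativity \eqref{eq:commutative_prop_moll} together with $-\nabla\!\cdot\!\bm{\sigma}(u)=f$, one obtains
\[
\Big|\int_F H_\delta\,\psi\Big| \;\lesssim\; \|\psi\|_{W^{1/\rho,\rho'}(F)}\qquad\forall\,\psi\in W^{1/\rho,\rho'}(F),
\]
with constant depending only on $\|\bm{\sigma}(u)\|_{L^\rho(\Omega_i)}$ and $\|f\|_{L^2(\Omega_i)}$ for $i\in I_\gamma$.

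To exploit the weak Kirchhoff assumption, I would then construct, for each $\psi\in C^\infty_c(F^\circ)$, a test function $\phi\in W^{1,\rho'}_0(\mathcal G)$ such that $\phi|_\gamma$ equals the extension of $\psi$ by zero to $\gamma$ and $\phi|_{\Omega_j}=0$ for $j\notin I_\gamma$. On each $\Omega_i$ with $i\in I_\gamma$, $\phi$ is produced by a standard $W^{1,\rho'}$ trace lifting with boundary datum $\psi$ on $\gamma$ and $0$ on $\partial\Omega_i\setminus\gamma$; continuity at every other bifurcation is automatic because the extension vanishes on both sides there. Green's identity on each $\Omega_i$ applied to the smooth field $\mathcal K_\delta^{\mathfrak d}\bm{\sigma}(u)$, combined with \eqref{eq:commutative_prop_moll} and the common trace $\phi|_\gamma=\psi$, gives
\[
\sum_{i=1}^N(\mathcal K_\delta^{\mathfrak d}\bm{\sigma}(u),\nabla\phi)_{\Omega_i} \;-\; \sum_{i=1}^N(\mathcal K_\delta^{\mathfrak b}(f),\phi)_{\Omega_i} \;=\; \int_F H_\delta\,\psi.
\]
Letting $\delta\to 0$, \eqref{eq:convergence_mollifiers} and \eqref{eq:weak_Kirchhoff_condition} collapse the left-hand side to zero, so $\int_F H_\delta\,\psi\to 0$ for every $\psi\in C^\infty_c(F^\circ)$. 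Since $1/\rho<1/\rho'$, $C^\infty_c(F^\circ)$ is dense in $W^{1/\rho,\rho'}(F)$; approximating $W_h|_F$ by $\psi_\epsilon\in C^\infty_c(F^\circ)$ up to any $\epsilon>0$, controlling $\int_F H_\delta(W_h-\psi_\epsilon)$ by $\epsilon$ via the uniform dual bound, and combining with $\int_F H_\delta\,\psi_\epsilon\to 0$ then forces $\mathcal R_\delta\to 0$ by a standard $\epsilon$–$\delta$ argument.

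The main obstacle, I expect, is the extension step: producing a $W^{1,\rho'}(\Omega_i)$ function with prescribed trace $\psi$ on $\gamma$ and \emph{vanishing} trace on the rest of $\partial\Omega_i$, including on every other bifurcation segment sharing $\Omega_i$, so that continuity at those other bifurcations holds trivially. For the polygonal $\Omega_i$ considered here this is a routine trace lifting, but it demands careful bookkeeping over neighboring bifurcations. Once this extension is in hand, everything else is a clean passage to the limit powered by the uniform dual bound and the weak Kirchhoff hypothesis.
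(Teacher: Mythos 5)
Your proof is correct and rests on the same skeleton as the paper's: reduce \eqref{eq:smoothgener} to showing that the summed mollified normal fluxes $H_\delta=\sum_{i\in I_\gamma}(\mathcal K_\delta^{\mathfrak d}\bm\sigma(u))|_{\Omega_i}\cdot\bfn_i$ integrated against traces of $w_h$ on $F$ vanish as $\delta\to0$, and prove that vanishing by lifting the test datum into the elements touching $F$, applying Green's theorem to the smooth mollified field, invoking the commutativity \eqref{eq:commutative_prop_moll} and convergence \eqref{eq:convergence_mollifiers}, assembling a global test function in $W_0^{1,\rho'}(\mathcal G)$ supported on $\cup_{i\in I_\gamma}\Omega_i$, and using \eqref{eq:weak_Kirchhoff_condition} together with $-\nabla\cdot\bm\sigma(u)=f$. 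Two packaging differences are worth noting. First, your closed-form identity $\sum_i\bm\tau_i\cdot\bfn_i v_i-\avg{\bm\tau}\odot\jump{v}=\tfrac{1}{\mathrm{card}(I_\gamma)}(\sum_i\bm\tau_i\cdot\bfn_i)(\sum_j v_j)$ is correct (a short expansion of the double sum confirms it) and is arguably cleaner than the paper's index-by-index manipulation mimicking \Cref{lem:modjump}; it makes transparent that the discrepancy between the two sides of \eqref{eq:smoothgener} is exactly $\tfrac{1}{m_\gamma}\int_F H_\delta W_h$. Second, your detour through $C_c^\infty$ test functions on the interior of $F$, a uniform-in-$\delta$ dual bound, and a density argument is valid (density holds since $\rho'/\rho<1$) but unnecessary: $W_h|_F$ is a polynomial, hence already an admissible datum for the lifting $L_F^{K_i}$, so you can test directly with it as the paper does with $\phi=w_h|_{\Omega_i}$. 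Likewise, the extension step you flag as the main obstacle is dispatched for free by the paper's construction: one takes the element-local lifting $L_F^{K_i}(\phi)$, whose trace vanishes on $\partial K_i\setminus F$, and extends it by zero to all of $\Omega_i$ and by zero to the domains with $i\notin I_\gamma$; continuity at every other bifurcation is then automatic without any global trace lifting on $\Omega_i$. The only point you leave implicit is that each of the two limits in \eqref{eq:smoothgener} actually exists (not merely that their difference tends to zero), but this follows from the same element-wise Green identity and mollifier convergence you already use, as in \eqref{eq:step_0_kirk}.
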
 
\begin{proof}
First, we  show that for any $\gamma \in \Gamma$, $F \in \mathcal{F}_h^\gamma$ and  $\phi \in C^\infty(F)$, the following holds 
\begin{equation}
\lim_{\delta \rightarrow 0 }  \sum_{i \in I_\gamma} \int_F (\mathcal{K}_{\delta}^\mathfrak{d} \bm \sigma (u)) \vert_{\Omega_i} \cdot \bfn_i \phi  = 0. \label{eq:weak_Kirchhoff_key} 
\end{equation}
To show \eqref{eq:weak_Kirchhoff_key}, fix $F \in \mathcal{F}_h^\gamma$ and denote by $K_i \in \mathcal{T}_h$ the element that belongs to $\Omega_i$ and shares the edge $F$. In other words, $K_i = \mathcal{T}_F \cap \mathcal{T}_{h,i}$.  Since the trace of $L_F^{K_i}(\phi)$ is non zero only on $F$, we write 
\begin{align}
\int_F (\mathcal{K}_{\delta}^\mathfrak{d} \bm \sigma (u)) \vert_{\Omega_i} \cdot \bfn_i \, \phi =  \int_{\partial K_i} (\mathcal{K}_{\delta}^\mathfrak{d} \bm \sigma (u)) \vert_{\Omega_i} \cdot \bfn_i \, L_F^{K_i}(\phi). 
\end{align}
Since $\mathcal K_\delta^\mathfrak{d} \bm \sigma (u)$ is smooth in $\Omega_i$, we can apply Green's theorem to obtain 
\begin{align}
\int_F (\mathcal{K}_{\delta}^\mathfrak{d} \bm \sigma (u)) \vert_{\Omega_i} \cdot \bfn_i \, \phi =  \int_{ K_i} \nabla \cdot (\mathcal{K}_{\delta}^\mathfrak{d} \bm \sigma (u)) \, L_F^{K_i}(\phi) + \int_{K_i}  (\mathcal{K}_{\delta}^\mathfrak{d} \bm \sigma (u)) \cdot \nabla L_F^{K_i}(\phi). 
\end{align}
Owing to the commutativity property \eqref{eq:commutative_prop_moll}, the convergence of the mollification operators \eqref{eq:convergence_mollifiers} and the fact that $L_F^{K_i} (\phi) \in W^{1,\rho'}(K_i)$, we obtain
\begin{align} \label{eq:step_0_kirk}
\lim_{\delta \rightarrow 0} \int_F (\mathcal{K}_{\delta}^\mathfrak{d} \bm \sigma (u)) \vert_{\Omega_i} \cdot \bfn_i \, \phi  = \int_{ K_i} \nabla \cdot ( \bm \sigma (u)) \, L_F^{K_i}(\phi) + \int_{K_i}   \bm \sigma (u) \cdot \nabla L_F^{K_i}(\phi) < \infty. 
\end{align}
Define $L_F^{\Omega_i} (\phi)$ as the extension by zero of $L_F^{K_i}(\phi)$ in $\Omega_i$. That is, $L_F^{\Omega_i}(\phi) = 0$ in $K$ if $K \neq K_i $ and $L_F^{\Omega_i}(\phi) = L_F^{K_i}(\phi) $ in $K_i$.  Since $L_F^{K_i} (\phi) \vert_{\partial K \backslash F} = 0$, the jump of $L_F^{\Omega_i}(\phi)$ evaluates to zero on the interior edges of $\Omega_i$.  Therefore, $L_F^{\Omega_i} (\phi) \in W^{1,\rho'}(\Omega_i)$. Extending this to the hypergraph, we define $L_F^{\mathcal{G}}(\phi)$ as follows 
\begin{align}
\forall 1\leq i\leq N, \quad
L_F^{\mathcal{G}}(\phi)|_{\Omega_i} = 
\begin{cases}
L_F^{\Omega_i} (\phi) &  \;\; i \in I_{\gamma}, \\ 
0 &  \;\;  i \notin I_{\gamma}.
\end{cases} 
\end{align}
It is easy to see that $L_F^{\mathcal{G}}(\phi) \in W_0^{1,\rho'}(\mathcal{G})$, see \eqref{eq:def_sobolev_graph}. Testing \eqref{eq:weak_Kirchhoff_condition} with  $ L_F^{\mathcal{G}}(\phi)$,  we obtain that 
\begin{align}
\sum_{i \in I_\gamma}  \int_{K_i} \bm \sigma_i(u_i)  \cdot \nabla L_F^{K_i}(\phi)  = \sum_{i \in I_{\gamma}}  \int_{K_i} f_i \,   L_F^{K_i}(\phi). 
\end{align}
With the above identity, \eqref{eq:step_0_kirk} yields
\begin{align*} 
\lim_{\delta \rightarrow 0} \sum_{i\in I_\gamma} \int_F (\mathcal{K}_{\delta}^\mathfrak{d} \bm \sigma (u)) \vert_{\Omega_i} \cdot  \bfn_i \, \phi  = \sum_{i\in I_\gamma} \int_{ K_i} \nabla \cdot ( \bm \sigma (u)) \,  L_F^{K_i}(\phi) + 
\sum_{i \in I_{\gamma}}  \int_{K_i} f_i   \, L_F^{K_i}(\phi).
\end{align*}
We then conclude and obtain \eqref{eq:weak_Kirchhoff_key} by noting that $- \nabla \cdot \bm \sigma (u ) \vert_{\Omega_i}= f_i $ in $L^2(\Omega_i)$.

Next, we prove \eqref{eq:smoothgener}.  The proof follows the argument of \Cref{lem:modjump}. Fix $w_h \in V_h^p$ and fix $F\in \mathcal{F}_h^\gamma$.  We note that for a fixed $i\in I_\gamma$ and with the short-hand notation $m_\gamma = \mathrm{card}(I_\gamma)$ for readibility
\[
(\mathcal{K}_{\delta}^\mathfrak{d} \bm \sigma (u)) \vert_{\Omega_i} \cdot  \bfn_i \, w_h|_{\Omega_i} 
= \frac{1}{m_\gamma}
(\mathcal{K}_{\delta}^\mathfrak{d} \bm \sigma (u)) \vert_{\Omega_i} \cdot  \bfn_i \, w_h|_{\Omega_i} 
+ \frac{1}{m_\gamma}
\sum_{j\in I_\gamma, j\neq i}
(\mathcal{K}_{\delta}^\mathfrak{d} \bm \sigma (u)) \vert_{\Omega_i} \cdot  \bfn_i \, w_h|_{\Omega_i}.
\]
Integrating over $F$ and taking the limit $\delta\rightarrow 0$, we have
\begin{multline*}
\lim_{\delta \rightarrow 0} \int_F (\mathcal{K}_{\delta}^\mathfrak{d} \bm \sigma (u)) \vert_{\Omega_i} \cdot  \bfn_i \, w_h|_{\Omega_i} \\
= \frac{1}{m_\gamma}
\lim_{\delta \rightarrow 0}  \int_F(\mathcal{K}_{\delta}^\mathfrak{d} \bm \sigma (u)) \vert_{\Omega_i} \cdot  \bfn_i \, w_h|_{\Omega_i} 
+ \frac{1}{m_\gamma} \lim_{\delta \rightarrow 0} 
\sum_{j\in I_\gamma, j\neq i}
\int_F (\mathcal{K}_{\delta}^\mathfrak{d} \bm \sigma (u)) \vert_{\Omega_i} \cdot  \bfn_i \, w_h|_{\Omega_i}.
\end{multline*}
With \eqref{eq:weak_Kirchhoff_key}, we write  with the choice of $\phi = w_h|_{\Omega_i}$
\[
\lim_{\delta \rightarrow 0} \int_F(\mathcal{K}_{\delta}^\mathfrak{d} \bm \sigma (u)) \vert_{\Omega_i} \cdot  \bfn_i \, w_h|_{\Omega_i}
= - \lim_{\delta \rightarrow 0}
\sum_{j\in I_\gamma, j\neq i}
\int_F (\mathcal{K}_{\delta}^\mathfrak{d} \bm \sigma (u)) \vert_{\Omega_j} \cdot  \bfn_j \, w_h|_{\Omega_i}.
\]
Therefore combining with the above, we have
\begin{equation}
\lim_{\delta \rightarrow 0} \int_F (\mathcal{K}_{\delta}^\mathfrak{d} \bm \sigma (u)) \vert_{\Omega_i} \cdot  \bfn_i \, w_h|_{\Omega_i}
=
\frac{1}{m_\gamma} \lim_{\delta \rightarrow 0} 
\sum_{j\in I_\gamma, j\neq i}
\int_F \left((\mathcal{K}_{\delta}^\mathfrak{d} \bm \sigma (u)) \vert_{\Omega_i} \cdot  \bfn_i 
-  (\mathcal{K}_{\delta}^\mathfrak{d} \bm \sigma (u)) \vert_{\Omega_j} \cdot  \bfn_j \right)  w_h|_{\Omega_i}.
\label{eq:int1}
\end{equation}
Clearly we have
\begin{align*}
\frac{1}{m_\gamma} 
\sum_{i\in I_\gamma}
\sum_{j\in I_\gamma, j\neq i}
\int_F \left((\mathcal{K}_{\delta}^\mathfrak{d} \bm \sigma (u)) \vert_{\Omega_i} \cdot  \bfn_i 
-  (\mathcal{K}_{\delta}^\mathfrak{d} \bm \sigma (u)) \vert_{\Omega_j} \cdot  \bfn_j \right)  w_h|_{\Omega_i}\\
= \frac{1}{m_\gamma} 
\sum_{(i,j)\in D_\gamma}
\left((\mathcal{K}_{\delta}^\mathfrak{d} \bm \sigma (u)) \vert_{\Omega_i} \cdot  \bfn_i 
-  (\mathcal{K}_{\delta}^\mathfrak{d} \bm \sigma (u)) \vert_{\Omega_j} \cdot  \bfn_j \right) 
(w_h|_{\Omega_i} - w_h|_{\Omega_j}).
\end{align*}
With this and summing \eqref{eq:int1} over $i\in I_\gamma$, we obtain the desired result.
\end{proof}

\begin{remark}
The property \eqref{eq:weak_Kirchhoff_condition} holds true if the space
$\mathcal{C}^\infty(\cup_i \Omega_i) \cap H_0^1(\mathcal{G})$ is dense in $W^{1,r}_0(\mathcal{G})$ for $1 < r < 2$. 
\end{remark}

\begin{lemma}[Weak consistency]\label{lemma:weak_consistency}
Assume that the solution $u$ to \eqref{eq:pdei2d} is such that $u \in V^{1+s}$ and that \eqref{eq:weak_Kirchhoff_condition} holds. Then, 
\begin{align}
    \mathcal{A}(u,w_h) = \sum_{i=1}^N \int_{\Omega_i} f_i w_h\vert_{\Omega_i}, 
\quad \forall w_h \in V_h^p. 
\end{align}
\end{lemma}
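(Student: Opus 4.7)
The plan is to approximate $\bm\sigma(u)$ by its mollification $\bm\sigma_\delta := \mathcal{K}_\delta^\mathfrak{d}(\bm\sigma(u))$, carry out element-wise integration by parts where classical Green's theorem applies, and then pass to the limit $\delta \to 0$ using the convergence properties of the mollifiers together with \Cref{lemma:weak_Kirchhoff_condition}.

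First I would fix $w_h \in V_h^p$ and introduce the auxiliary quantity $\mathcal{A}_\delta(w_h)$ defined exactly as in \eqref{eq:extended_form_A} but with $\bm\sigma(u)$ replaced by $\bm\sigma_\delta$. Since $\bm\sigma_\delta$ is smooth on each $\Omega_i$, Green's theorem together with the defining property of the lifting operator $L_F^K$ reduces every duality pairing in \eqref{eq:def_duality_pair} to the classical boundary integral $\langle (\bm\sigma_\delta|_K\cdot\bm n_K)|_F,\phi\rangle_F = \int_F (\bm\sigma_\delta|_K\cdot\bm n_K)\,\phi$. The proof of \Cref{lemma:froms_agree_discrete} uses only element-wise smoothness, so its identities extend to $\bm\sigma_\delta$: on interior faces and $\partial\mathcal{G}$ faces the $\tau_F(\bm\sigma_\delta,w_h)$ terms reduce to the standard average--jump face integrals, while on bifurcation faces $\tau_F^\gamma(\bm\sigma_\delta,w_h) = \int_F\{\bm\sigma_\delta\}\odot[w_h]$.

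Next I would apply classical Green's theorem element by element to $\sum_{i,K}\int_K \bm\sigma_\delta\cdot\nabla w_h$, invoke the commutativity relation \eqref{eq:commutative_prop_moll} and $-\nabla\cdot\bm\sigma(u)=f$ to rewrite the volume term as $\sum_{i,K}\int_K \mathcal{K}_\delta^\mathfrak{b}(f)\,w_h$, and reassemble the boundary contributions by face type. On interior and $\partial\mathcal{G}$ faces these cancel exactly against $\tau_F(\bm\sigma_\delta,w_h)$ by the previous step, while on each bifurcation edge $F\in\mathcal{F}_h^\gamma$ the raw sum $\sum_{i\in I_\gamma}\int_F \bm\sigma_\delta|_{\Omega_i}\cdot\bm n_i\,w_h|_{\Omega_i}$ differs from $\tau_F^\gamma(\bm\sigma_\delta,w_h)$ because the mollified flux does not satisfy Kirchhoff's law. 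Collecting everything yields
\[
\mathcal{A}_\delta(w_h) = \sum_{i=1}^N\sum_{K\in\mathcal{T}_{h,i}}\int_K \mathcal{K}_\delta^\mathfrak{b}(f)\,w_h + R_\delta,
\]
where $R_\delta$ is the sum of Kirchhoff discrepancies over all bifurcation faces. \Cref{lemma:weak_Kirchhoff_condition} applied face by face gives $R_\delta \to 0$ as $\delta\to 0$, while $\mathcal{K}_\delta^\mathfrak{b}(f)\to f$ in $L^2(\mathcal{G})$ by \eqref{eq:convergence_mollifiers}, so the right-hand side tends to $\sum_i\int_{\Omega_i}f_i\,w_h|_{\Omega_i}$.

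The hard part is establishing $\mathcal{A}_\delta(w_h)\to\mathcal{A}(u,w_h)$, since the target form uses duality pairings whereas $\mathcal{A}_\delta$ uses classical integrals. To close this gap I would revert to the definition \eqref{eq:def_duality_pair}: for each admissible polynomial trace $\phi$ (namely $[w_h]$, $[w_h]_{(i,j)}$, or $w_h$, all of which lie in $W^{1/\rho,\rho'}(F)$), Green's theorem gives $\int_F(\bm\sigma_\delta|_K\cdot\bm n_K)\,\phi = \int_K \bm\sigma_\delta\cdot\nabla L_F^K(\phi) + \int_K(\nabla\cdot\bm\sigma_\delta)\,L_F^K(\phi)$. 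The first integral converges because $\bm\sigma_\delta\to\bm\sigma(u)$ in $L^\rho(\mathcal{G})^2$ and $\nabla L_F^K(\phi)\in L^{\rho'}(K)^2$; the second converges because $\nabla\cdot\bm\sigma_\delta=\mathcal{K}_\delta^\mathfrak{b}(\nabla\cdot\bm\sigma(u))\to\nabla\cdot\bm\sigma(u)$ in $L^2(\mathcal{G})$ and $L_F^K(\phi)\in L^2(K)$ by Sobolev embedding. Hence $\tau_F(\bm\sigma_\delta,w_h)\to\tau_F(u,w_h)$ and likewise $\tau_F^\gamma(\bm\sigma_\delta,w_h)\to\tau_F^\gamma(u,w_h)$, so $\mathcal{A}_\delta(w_h)\to\mathcal{A}(u,w_h)$ and passing to the limit in the identity above delivers the claim.
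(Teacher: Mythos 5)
Your proposal is correct and follows essentially the same route as the paper: mollify $\bm\sigma(u)$, use element-wise smoothness to turn the duality pairings into classical face integrals (the analogue of \Cref{lemma:froms_agree_discrete}), integrate by parts element by element, invoke \Cref{lemma:weak_Kirchhoff_condition} to kill the Kirchhoff discrepancy on bifurcation faces, and pass to the limit via \eqref{eq:commutative_prop_moll}, \eqref{eq:convergence_mollifiers} and the lifting-operator representation \eqref{eq:def_duality_pair}. The only difference is bookkeeping: you isolate the mismatch as a remainder $R_\delta$ and send it to zero, whereas the paper converts the bifurcation terms directly before reassembling $\sum_K\int_{\partial K}$; the content is identical.
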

\begin{proof}
Following \cite{ern2021quasi}, we approximate the forms  $\tau_F$ and $\tau^\gamma_F$ by introducing the mollification operator $\mathcal{K}_\delta^\mathfrak{d}$ in the forms:
\begin{align*}
\tau_{F,\delta} (u, w_h) & = \frac12
\sum_{K \in \mathcal{T}_F} \epsilon_{K,F}  \langle (\mathcal{K}_{\delta}^\mathfrak{d} 
 \bm \sigma(u)\vert_K \cdot \bm n_K) \vert_F, [w_h]  \rangle_F  , \quad F \in  \cup_{i=1}^N, \mathcal{F}_{h,i}^\mathrm{int},\\
 \tau_{F,\delta} (u, w_h) & = 
\sum_{K \in \mathcal{T}_F}  \langle (\mathcal{K}_{\delta}^\mathfrak{d} 
 \bm \sigma(u)\vert_K \cdot \bm n_K) \vert_F, w_h  \rangle_F  , \quad F \in  \cup_{i=1}^N \mathcal{F}_{h,i}^\mathrm{\partial\mathcal{G}},
 \end{align*}
 and for any $\gamma\in\Gamma$:
 \begin{align*}
 \tau^\gamma_{F, \delta} (u,w_h) & = \frac{1}{\mathrm{card}(I_{\gamma})}
 \sum_{(i,j) \in D_\gamma} \sum_{K \in \mathcal{T}_F \cap (\Omega_i \cup \Omega_j) } \epsilon_{K,F} \langle (\mathcal{K}_{\delta}^\mathfrak{d} 
 \bm \sigma (u)\vert_K \cdot \bm n_K) \vert_F, [w_h]_{(i,j)}  \rangle_F, \quad
 F \in \mathcal{F}_h^\gamma.
\end{align*}
With \eqref{eq:convergence_mollifiers}, for any polynomial function $\xi_h$ defined on $F$, we have 
\begin{align*}
 \lim_{\delta \rightarrow 0} \langle (\mathcal{K}_{\delta}^\mathfrak{d} 
 \bm \sigma(u)\vert_K \cdot \bm n_K) \vert_F, \xi_h  \rangle_F  & 
 =  \lim_{\delta \rightarrow 0} \left( \int_{K} \nabla \cdot ( \mathcal{K}_{\delta}^\mathfrak{b} 
\bm  \sigma (u))  L_F^{K}(\xi_h) + \int_{K}  \mathcal{K}_{\delta}^\mathfrak{d} 
\bm  \sigma (u) \cdot \nabla L_F^{K}(\xi_h) \right) \\ 
 & = \int_{K} \nabla \cdot ( \bm \sigma (u))  L_F^{K}(\xi_h) + \int_{K} \bm \sigma (u) \cdot \nabla L_F^{K}(\xi_h)  \\ 
 & =  \langle 
( \bm \sigma(u)\vert_K \cdot \bm n_K) \vert_F, \xi_h  \rangle_F.
\end{align*}
This implies
\begin{align}
\lim_{\delta \rightarrow 0 }  \tau_{F,\delta}(u,w_h) = \tau_F(u,w_h), \quad \lim_{\delta \rightarrow 0 }   \tau^\gamma_{F,\delta}(u,w_h) = \tau^\gamma_F(u,w_h). \label{eq:weak_consistency_0}
\end{align}
Therefore we have
\begin{align*}
\sum_{i=1}^N \sum_{F \in \mathcal{F}_{h,i}^{\mathrm{int}} \cup \mathcal{F}_{h,i}^{\partial \mathcal{G}}}  & \tau_{F} (u,w_h )  +  \sum_{\gamma \in \Gamma} \sum_{F \in \mathcal{F}_h^\gamma } \tau^\gamma_{F} (u, w_h) \\ & =  \lim_{\delta \rightarrow 0}    \sum_{i=1}^N \sum_{F \in \mathcal{F}_{h,i}^{\mathrm{int}} \cup \mathcal{F}_{h,i}^{\partial\mathcal{G}}} \tau_{F,\delta} (u,w_h )  + \lim_{\delta \rightarrow 0 }\sum_{\gamma 
\in \Gamma} \sum_{F \in \mathcal{F}_h^\gamma } \tau^\gamma_{F,\delta} (u, w_h). 
\end{align*}
Since $\mathcal{K}_{\delta}^\mathfrak{d}(u)$ is smooth when restricted to one domain, an argument similar to \Cref{lemma:froms_agree_discrete} yields 
\begin{align*}
\sum_{i=1}^N \sum_{F \in \mathcal{F}_{h,i}^{\mathrm{int}} \cup \mathcal{F}_{h,i}^{\partial \mathcal{G}}} \tau_{F,\delta} (u, w_h )
= &
\sum_{i=1}^N 
\sum_{F \in \mathcal{F}_{h,i}^{\mathrm{int}}\cup \mathcal{F}_{h,i}^{\partial\mathcal{G}}} \int_{F } \{ \mathcal{K}_{\delta}^\mathfrak{d} 
 \bfsigma(u_i) \cdot\bfn_F\} [w_h],\\
\sum_{\gamma \in \Gamma} \sum_{F \in \mathcal{F}_h^\gamma } \tau^\gamma_{F,\delta} (u, w_h)   
=  &\sum_{\gamma \in \Gamma} \sum_{F\in\mathcal{F}_h^\gamma}
\int_F \{\mathcal{K}_{\delta}^\mathfrak{d} 
 \bfsigma(u)\} \odot \jump{w_h}. 
\end{align*} 
Therefore, we have with \Cref{lemma:weak_Kirchhoff_condition}
\begin{align*}
&\sum_{i=1}^N \sum_{F \in \mathcal{F}_{h,i}^{\mathrm{int}} \cup \mathcal{F}_{h,i}^{\partial\mathcal{G}}}   \tau_{F} (u,w_h )  +  \sum_{\gamma \in \Gamma} \sum_{F \in \mathcal{F}_h^\gamma } \tau^\gamma_{F} (u, w_h) \\
&= \lim_{\delta \rightarrow 0}
\sum_{i=1}^N 
\sum_{F \in \mathcal{F}_{h,i}^{\mathrm{int}}\cup \mathcal{F}_{h,i}^{\partial}} \int_{F } \{ \mathcal{K}_{\delta}^\mathfrak{d} 
 \bfsigma(u_i) \cdot\bfn_F\} [w_h]
 + \lim_{\delta \rightarrow 0}
 \sum_{\gamma \in \Gamma} \sum_{F\in\mathcal{F}_h^\gamma}
\int_F \{\mathcal{K}_{\delta}^\mathfrak{d} 
 \bfsigma(u)\} \odot \jump{w_h}\\
 & = \lim_{\delta \rightarrow 0}
\sum_{i=1}^N 
\sum_{F \in \mathcal{F}_{h,i}^{\mathrm{int}}\cup \mathcal{F}_{h,i}^{\partial\mathcal{G}}} \int_{F } \{ \mathcal{K}_{\delta}^\mathfrak{d} 
 \bfsigma(u_i) \cdot\bfn_F\} [w_h]
 + \lim_{\delta \rightarrow 0}
  \sum_{\gamma \in \Gamma} \sum_{F\in\mathcal{F}_h^\gamma}
  \sum_{i\in I_\gamma}
  \int_F (\mathcal{K}_{\delta}^\mathfrak{d} 
 \bfsigma(u))|_{\Omega_i} \cdot \bfn_i \, 
 w_h|_{\Omega_i}\\
 & = \lim_{\delta \rightarrow 0}
 \left(
\sum_{i=1}^N 
\sum_{F \in \mathcal{F}_{h,i}^{\mathrm{int}}\cup \mathcal{F}_{h,i}^{\partial\mathcal{G}}} \int_{F } \{ \mathcal{K}_{\delta}^\mathfrak{d} 
 \bfsigma(u_i) \cdot\bfn_F\} [w_h]
 +\sum_{\gamma \in \Gamma} \sum_{F\in\mathcal{F}_h^\gamma}
  \sum_{i\in I_\gamma}
  \int_F (\mathcal{K}_{\delta}^\mathfrak{d} 
 \bfsigma(u))|_{\Omega_i} \cdot \bfn_i \, 
 w_h|_{\Omega_i}
 \right)\\
 & = \lim_{\delta \rightarrow 0}
 \sum_{i=1}^N
 \sum_{K\in\mathcal{T}_{h,i}}
 \int_{\partial K} \mathcal{K}_{\delta}^\mathfrak{d} 
 \bfsigma(u_i)|_K \cdot \bfn_K \, w_h|_K.
\end{align*}
Next,  we apply Green's theorem locally on each element $K$, and the convergence of the mollified operators \eqref{eq:convergence_mollifiers} to obtain 
\begin{align}
\sum_{i =1}^N \sum_{F \in \mathcal{F}_{h,i}^{\mathrm{int}} \cup \mathcal{F}_{h,i}^{\partial \mathcal{G}}} \tau_F(u,w_h)   &  +  \sum_{\gamma \in \Gamma} \sum_{F \in \mathcal{F}_h^\gamma }  \tau^\gamma_{F}(u, w_h)   \\ & =
\lim_{\delta \rightarrow 0 } \sum_{i=1}^N  \sum_{K \in \mathcal{T}_{h,i}} \left(\int_{ K} \nabla \cdot ( \mathcal{K}_{\delta}^\mathfrak{b} \bm \sigma (u)) \, w_h + \int_{K }  \mathcal{K}_{\delta}^\mathfrak{d}  \bm \sigma (u) \cdot \nabla w_h \right)\nonumber \\
& = - \sum_{i=1}^N \int_{\Omega_i} f_i \, w_h +  \sum_{i=1}^N  \sum_{K \in \mathcal{T}_{h,i}} \int_{K }  \bm \sigma (u) \cdot \nabla w_h. \nonumber 
\end{align}
Rearranging the above equality finishes the proof. 
\end{proof}
\begin{lemma}[Continuity of $\mathcal{A}$] \label{lemma:continuity_A}
Let $v$ be such that $v_i \in H^{1+s}(\Omega_i)$ and $\nabla \cdot \bm \sigma_i(v_i)\in L^2(\Omega_i)$ for $1\leq i\leq N$.  
For any  $w_h \in V_h^p$, the following bound holds
\begin{equation}
\mathcal{A}(v, w_h) \lesssim \Big(\sum_{i=1}^N (\|\nabla v\|_{L^2(\Omega_i)}^2 + h^{2s}  \vert\nabla v\vert_{H^{s}(\Omega_i)}^2)  + h^2 \sum_{K\in\mathcal{T}_h} \|\nabla  \cdot \bm \sigma(v)\|_{L^2(K)}^2\Big)^{1/2} \|w_h\|_{\DG}.
\end{equation}
\end{lemma}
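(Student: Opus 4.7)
The plan is to split $\mathcal{A}(v,w_h)$ as the sum of the broken volume term and the face contributions $\tau_F$ and $\tau_F^\gamma$, and to bound each by the $L^2$-norm of a jump of $w_h$ multiplied by a local quantity that reassembles into the square root in the statement. Throughout, we exploit that every face contribution pairs $\bm\sigma(v)$ against a polynomial (a jump of a DG function on $F$), so all duality pairings can be replaced by the integral representation \eqref{eq:def_duality_pair}.

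For the volume term I would apply Cauchy--Schwarz together with $\kappa\in L^\infty$ to get
\[
\Big|\sum_{i=1}^N\sum_{K\in\mathcal T_{h,i}}\int_K \bm\sigma(v)\cdot\nabla w_h\Big|\lesssim \Big(\sum_{i=1}^N\|\nabla v\|_{L^2(\Omega_i)}^2\Big)^{1/2}\|w_h\|_{\mathrm{DG}}.
\]
For a fixed interior or boundary face $F$ and a neighboring $K$, I would expand the duality pairing via \eqref{eq:def_duality_pair} with $\phi$ equal to $[w_h]$ (or $w_h|_{\Omega_i}$ on the boundary) and apply H\"older's inequality:
\[
\bigl|\langle(\bm\sigma(v)|_K\cdot\bm n_K)|_F,\phi\rangle_F\bigr|\le \|\bm\sigma(v)\|_{L^\rho(K)}\,\|\nabla L_F^K(\phi)\|_{L^{\rho'}(K)}+\|\nabla\!\cdot\!\bm\sigma(v)\|_{L^2(K)}\,\|L_F^K(\phi)\|_{L^2(K)}.
\]

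The first factor is controlled by the Sobolev embedding $H^s(K)\hookrightarrow L^\rho(K)$ in 2D (valid since $\rho=2/(1-s)$), which after scaling on a shape-regular $K$ yields
\[
\|\bm\sigma(v)\|_{L^\rho(K)}\lesssim \|\kappa\|_\infty\bigl(h_K^{-s}\|\nabla v\|_{L^2(K)}+|\nabla v|_{H^s(K)}\bigr).
\]
For the lifting, since $\phi$ is polynomial on $F$, I would combine the standard $W^{1,2}$ stability $\|\nabla L_F^K(\phi)\|_{L^2(K)}+h_F^{-1}\|L_F^K(\phi)\|_{L^2(K)}\lesssim h_F^{-1/2}\|\phi\|_{L^2(F)}$ with the polynomial inverse/embedding bound $\|\cdot\|_{L^{\rho'}(K)}\lesssim |K|^{1/\rho'-1/2}\|\cdot\|_{L^2(K)}\sim h_K^{2/\rho'-1}\|\cdot\|_{L^2(K)}$. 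Using $2/\rho'-1=s$ (from $\rho'=2/(1+s)$) this gives
\[
\|\nabla L_F^K(\phi)\|_{L^{\rho'}(K)}\lesssim h_K^{\,s-1/2}\|\phi\|_{L^2(F)},\qquad \|L_F^K(\phi)\|_{L^2(K)}\lesssim h_F^{1/2}\|\phi\|_{L^2(F)}.
\]
Inserting these yields, for every face and adjacent element,
\[
\bigl|\langle \bm\sigma(v)|_K\cdot\bm n_K,\phi\rangle_F\bigr|\lesssim\bigl(h^{-1/2}\|\nabla v\|_{L^2(K)}+h^{s-1/2}|\nabla v|_{H^s(K)}+h^{1/2}\|\nabla\!\cdot\!\bm\sigma(v)\|_{L^2(K)}\bigr)\|\phi\|_{L^2(F)}.
\]

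Summing over $F\in\mathcal F_{h,i}^{\mathrm{int}}\cup\mathcal F_{h,i}^{\partial\mathcal G}$ and using discrete Cauchy--Schwarz over faces controls $\sum_F |\tau_F(v,w_h)|$ by the desired square root times $\bigl(\sum_F h_F^{-1}\|[w_h]\|_{L^2(F)}^2\bigr)^{1/2}$, which is part of $\|w_h\|_{\mathrm{DG}}$. For the bifurcation contributions $\tau_F^\gamma$ I would iterate the same estimate over the pairs $(i,j)\in D_\gamma$, absorbing the factor $1/\mathrm{card}(I_\gamma)$ and using that $\mathrm{card}(I_\gamma)$ is uniformly bounded; the pairwise jumps $[w_h]_{(i,j)}$ contribute exactly the bifurcation penalty part of the DG norm in \eqref{eq:dgnorm2d}. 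The main obstacle is the delicate scaling $h^{s-1/2}$ for $\|\nabla L_F^K(\phi)\|_{L^{\rho'}(K)}$, which is where the fractional regularity index $s$ enters the final bound through the specific identity $2/\rho'-1=s$; once this is in place the remainder is a bookkeeping argument with Cauchy--Schwarz.
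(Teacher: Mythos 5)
Your proposal is correct and follows essentially the same route as the paper: volume term by Cauchy--Schwarz, then the key face estimate $|\langle(\bm\sigma(v)|_K\cdot\bm n_K)|_F,\phi_h\rangle_F|\lesssim(\|\nabla v\|_{L^2(K)}+h_K^s|\nabla v|_{H^s(K)}+h_K\|\nabla\cdot\bm\sigma(v)\|_{L^2(K)})h_F^{-1/2}\|\phi_h\|_{L^2(F)}$ combined with the scaled Sobolev embedding $H^s(K)\hookrightarrow L^\rho(K)$ and a discrete Cauchy--Schwarz over faces and pairs in $D_\gamma$. The only difference is that where the paper simply cites \cite[Lemma 3.2]{ern2021quasi} for that face estimate, you re-derive it by applying H\"older to the defining identity \eqref{eq:def_duality_pair} and working out the $h_K^{s-1/2}$ scaling of the lifting in $L^{\rho'}(K)$ via $2/\rho'-1=s$, which is a faithful unfolding of the cited result rather than a different argument.
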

\begin{proof}
We fix $v$ such that $v_i \in H^{1+s}(\Omega_i)$ and $\nabla \cdot \bm \sigma_i(v_i)\in L^2(\Omega_i)$ for $1\leq i\leq N$. For the first term in the definition of $\mathcal{A}$, we use Cauchy-Schwarz's inequality:
\[
\sum_{i=1}^N \sum_{K\in\mathcal{T}_{h,i}}
\int_K \bm\sigma(v) \cdot \nabla w_h
\leq \kappa_\ast \Vert w_h \Vert_{\mathrm{DG}}
\Big( \sum_{i=1}^N \Vert \nabla v \Vert_{L^2(\Omega_i)}^2\Big)^{1/2},
\]
where $\kappa_\ast = \max_{i}\Vert \kappa \Vert_{L^\infty(\Omega_i)}$.
From \cite[Lemma 3.2]{ern2021quasi} (with $q=2,  p = \rho= 4/(2-2s)  $), we have that for any polynomial function $\phi_h$ defined on $F$  and for any $K\in\mathcal{T}_h$ 
\begin{equation}
|\langle  (\bm \sigma(v)\vert_K \cdot \bfn_K) \vert_F, \phi_h  \rangle_F| \lesssim (h_K^s\|\bm \sigma(v)\|_{L^\rho(K)} + h_K \|\nabla \cdot \bm \sigma(v)\|_{L^2(K)}) h_F^{-1/2} \|\phi_h\|_{L^2(F)}. 
\end{equation}
We now recall the Sobolev embedding $H^s(K) \hookrightarrow L^\rho(K)$ with the correct scaling with respect to the element diameter $h_K$, see for e.g. \cite[Lemma 11.7]{Ern:booki}
\[
\forall z\in H^s(K), \quad
\Vert z \Vert_{L^\rho(K)}
\lesssim h_K^{-s} \Vert z \Vert_{L^2(K)}
+ \vert z \vert_{H^{s}(K)}.
\]
Since $\kappa$ belongs to $L^\infty(\Omega)$ and $\nabla v|_{\Omega_i}$ belongs to $H^{s}(\Omega_i)$ for all $1\leq i\leq N$, we have
for any $K\in\mathcal{T}_h$:
\begin{align}
\|\bm \sigma (v)\|_{L^\rho(K)} \lesssim h_K^{-s} \|\nabla v\|_{L^{2}(K)} + \vert \nabla v\vert_{H^{s}(K)}. 
\end{align}
 Combining the two bounds above, we obtain that 
\begin{multline}\label{eq:genres}
|\langle  (\bm \sigma(v)\vert_K \cdot \bm n_K) \vert_F, \phi_h  \rangle_F| \lesssim ( \|\nabla v \|_{L^2(K)} + h_K^s\vert \nabla v\vert_{H^{s}(K)} + h_K \|\nabla \cdot \bm \sigma(v)\|_{L^2(K)}) \\ \times  h_F^{-1/2} \|\phi_h\|_{L^2(F)}.
\end{multline}
The above bound is critical to estimate the second and third terms in $\mathcal{A}(v,w_h)$, see \eqref{eq:extended_form_A}.
Let us give some details for the third term; the second term is handled in a similar manner.
\[
\sum_{\gamma \in \Gamma}
\sum_{F\in\mathcal{F}_h^\gamma}
\tau^\gamma_F(v,w_h)
\leq \sum_{\gamma \in \Gamma}
\sum_{F\in\mathcal{F}_h^\gamma}
\frac{1}{\mathrm{card}(I_\gamma)}
\sum_{(i,j)\in D_\gamma}
\sum_{K \in \mathcal{T}_F \cap (\Omega_i \cup\Omega_j)} \vert
\langle (\bm \sigma(v)|_K \cdot \bfn_K)|_F, 
[w_h]_{(i,j)}\rangle_F\vert.
\]
Applying \eqref{eq:genres} with $\phi_h = [w_h]_{(i,j)}$, we have
\begin{multline*}
\sum_{\gamma \in \Gamma}
\sum_{F\in\mathcal{F}_h^\gamma}
\tau^\gamma_F(v,w_h)
\lesssim
\left( \sum_{\gamma \in \Gamma}
\sum_{F\in\mathcal{F}_h^\gamma}
\frac{\eta_\gamma}{h_F} \sum_{(i,j)\in D_\gamma}
\Vert [w_h]_{(i,j)}\Vert_{L^2(F)}^2
\right)^{1/2}\\
\times
\left(
\sum_{\gamma \in \Gamma}
\sum_{F\in\mathcal{F}_h^\gamma}
\sum_{(i,j)\in D_\gamma}
\sum_{K \in \mathcal{T}_F \cap (\Omega_i \cup\Omega_j)}
(\|\nabla v \|_{L^2(K)}^2 + h_K^{2s}\vert \nabla v\vert_{H^{s}(K)}^2 + h_K^2 \|\nabla \cdot \bm \sigma(v)\|_{L^2(K)}^2)
\right)^{1/2},
\end{multline*}
which easily gives the desired upper bound.
\end{proof}
\begin{theorem}[Error estimate]\label{thm:lr_estim}
Under the assumption of \Cref{lemma:weak_Kirchhoff_condition} and the assumption that $\kappa$ is piecewise constant,   the following bound holds
\begin{equation}\label{eq:lr_estim}
\|u - u_h\|_{\DG} \lesssim h^s \left( \sum_{i=1}^N |u|_{H^{1+s}(\Omega_i)}^2 + \sum_{i=1}^N \|f\|_{L^2(\Omega_i)}^2\right)^{1/2}. 
\end{equation}
\end{theorem}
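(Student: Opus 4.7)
The plan is to combine coercivity (\Cref{lem:coerc}), weak consistency (\Cref{lemma:weak_consistency}), and continuity of the extended form (\Cref{lemma:continuity_A}) in a medius-type argument that bridges the exact solution and the discrete one through a continuous quasi-interpolant. As a first step I would construct an operator $I_h : H_0^1(\mathcal{G}) \to V_h^1 \cap H_0^1(\mathcal{G})$ of Scott--Zhang flavor, taking care that the degrees of freedom associated with any bifurcation $\gamma \in \Gamma$ are shared by all surrounding $\Omega_i$ with $i \in I_\gamma$; equivalently, one may use the broken $L^2$ projection onto $V_h^1$ composed with the enriching map $E_h$ of \Cref{lemma:enriching_map}. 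The properties we require of $I_h u$ are global continuity, homogeneous boundary values on $\partial\mathcal{G}$, and the local approximation estimate $\|\nabla(u - I_h u)\|_{L^2(\Omega_i)} \lesssim h^s |u|_{H^{1+s}(\Omega_i)}$ on each subdomain.

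Setting $\eta = u_h - I_h u \in V_h^p$, coercivity will give $\|\eta\|_{\DG}^2 \lesssim a(\eta, \eta) = a(u_h, \eta) - a(I_h u, \eta)$. The discrete scheme \eqref{eq:scheme} replaces $a(u_h, \eta)$ by $\sum_i \int_{\Omega_i} f_i \eta|_{\Omega_i}$, and weak consistency rewrites this right-hand side as $\mathcal{A}(u, \eta)$. For the remaining contribution $a(I_h u, \eta)$, the crucial observation is that because $I_h u$ is continuous across every interior edge and every bifurcation face and vanishes on $\partial\mathcal{G}$, all jump, symmetrization, and penalty contributions in $a(I_h u, \eta)$ drop out, and the surviving volume and consistency flux terms coincide exactly with $\mathcal{A}(I_h u, \eta)$ by \Cref{lemma:froms_agree_discrete}. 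Subtracting yields the clean error identity
\[
\|\eta\|_{\DG}^2 \lesssim \mathcal{A}(u - I_h u, \eta).
\]

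I would then close the argument by applying the continuity estimate (\Cref{lemma:continuity_A}) with $v = u - I_h u$. The hypothesis that $\kappa$ is piecewise constant combined with the piecewise linearity of $I_h u$ forces $\nabla \cdot \bm{\sigma}_i(I_h u) = \kappa_i \Delta I_h u = 0$ on each element $K$, so $\nabla \cdot \bm{\sigma}(u - I_h u) = -f$ elementwise, and the associated term in \Cref{lemma:continuity_A} contributes at most $h^2 \sum_i \|f\|_{L^2(\Omega_i)}^2 \leq h^{2s} \sum_i \|f\|_{L^2(\Omega_i)}^2$ since $s \leq 1$. For the fractional seminorm, since $\nabla I_h u$ is constant on each element, $|\nabla(u - I_h u)|_{H^s(K)} = |\nabla u|_{H^s(K)}$, and summing the local contributions recovers $|u|_{H^{1+s}(\Omega_i)}^2$; the $L^2$ gradient piece is controlled directly by the Scott--Zhang estimate. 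Dividing by $\|\eta\|_{\DG}$ and closing with the triangle inequality $\|u - u_h\|_{\DG} \leq \|u - I_h u\|_{\DG} + \|\eta\|_{\DG}$ (whose first term reduces to the gradient contribution since all jumps of $u - I_h u$ vanish on interior, bifurcation, and boundary faces) will then deliver \eqref{eq:lr_estim}.

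The principal technical obstacle, in my view, is the construction and analysis of the interpolant $I_h u$ on the hypergraph: one must ensure that the averaging prescribes matching nodal values along every bifurcation segment so that the image genuinely lies in $V_h^1 \cap H_0^1(\mathcal{G})$, while retaining $O(h^s)$ local accuracy under the sole assumption $u|_{\Omega_i} \in H^{1+s}(\Omega_i)$. Once this is settled, the remainder of the argument is essentially bookkeeping built around the three preparatory lemmas cited above.
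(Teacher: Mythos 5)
Your argument is sound and reaches the stated bound, but it diverges from the paper's proof in one structural choice: you insist on a quasi-interpolant $I_h u$ that is \emph{globally} conforming, i.e.\ continuous across the bifurcation segments and vanishing on $\partial\mathcal{G}$, so that every jump, symmetrization and penalty term in $a(I_h u,\eta)$ vanishes and you land on the clean identity $\|\eta\|_{\DG}^2 \lesssim \mathcal{A}(u-I_hu,\eta)$. The paper instead takes $\mathcal{I}_h u$ to be the Scott--Zhang interpolant defined \emph{separately on each} $\Omega_i$, which is conforming within each subdomain and zero on $\partial\mathcal{G}$ but has nonzero jumps $[\mathcal{I}_h u]_{(i,j)}$ across the bifurcations; consequently two extra terms survive, $T_2=\sum_{\gamma}\sum_{F}\int_F\{\bm\sigma(\chi_h)\}\odot[\mathcal{I}_h u]$ and the corresponding penalty term $T_3$, which are then estimated by writing $[\mathcal{I}_h u]_{(i,j)}=[\mathcal{I}_h u-u]_{(i,j)}$ (using $u\in H_0^1(\mathcal{G})$) and invoking a trace inequality plus local approximation to get $O(h^s)$. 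The trade-off is exactly the one you identify: the paper's interpolant is standard and its analysis is purely local to each $\Omega_i$, at the cost of the $T_2,T_3$ bookkeeping; your route removes $T_2,T_3$ entirely but shifts the burden onto constructing a hypergraph-conforming interpolant with $\|\nabla(u-I_hu)\|_{L^2(\Omega_i)}\lesssim h^s|u|_{H^{1+s}(\Omega_i)}$ under only $H^{1+s}$, $0<s<1$, regularity --- which is doable (either by choosing the Scott--Zhang averaging faces for nodes on $\gamma$ inside $\gamma$ itself, where the trace of $u$ is single-valued, or via $E_h\circ\pi_h$ together with \eqref{eq:L2enrichK} and the observation that the jumps of $\pi_h u$ equal those of $\pi_h u-u$), but is precisely the nontrivial step your proposal leaves open. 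The remaining ingredients (weak consistency, \Cref{lemma:froms_agree_discrete}, \Cref{lemma:continuity_A}, the elementwise vanishing of $\nabla\cdot\bm\sigma(I_h u)$ for piecewise constant $\kappa$, and the final triangle inequality) are used identically in both arguments.
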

\begin{proof}
Let $\mathcal{I}_h u \in V_h^1$ be a quasi interpolant of the exact solution $u$ defined locally on each $\Omega_i$, $\mathcal{I}_h u \vert_{\Omega_i} = \mathcal{I}_{h,i} u\vert_{\Omega_i}$ where $\mathcal{I}_{h,i}$ can be chosen as the Scott--Zhang interpolation operator defined over $\Omega_i$. 
Define $\chi_h = u_h - \mathcal{I}_h u $. 
We use the coercivity property of $a$: 
\begin{align}
\|\chi_h\|^2_{\DG} \lesssim a(\chi_h , \chi_h) = a(u_h, \chi_h) - a(\mathcal{I}_h u, \chi_h) = \sum_{i=1}^N \int_{\Omega_i} f_i \chi_h \vert_{\Omega_i} -  a(\mathcal{I}_h u, \chi_h).
\end{align}
Using \Cref{lemma:weak_consistency}, we obtain 
\begin{align}
    \|\chi_h\|^2_{\DG} \lesssim   \mathcal{A}(u, \chi_h) - a(\mathcal{I}_h u, \chi_h). 
\end{align}
We now rewrite the second term above using \Cref{lemma:froms_agree_discrete}, the fact that the jump of $I_h u$ vanishes on any edge $F \in \mathcal{F}_{h,i}^\mathrm{int}$ and the fact that $I_h u$ is zero on the hypergraph boundaries $\mathcal{F}_{h,i}^{\partial \mathcal{G}}$. In particular, it is easy to see that 
\begin{align}
a(\mathcal{I}_h u ,\chi_h) = \mathcal{A}(\mathcal{I}_h u, \chi_h) - \sum_{\gamma \in \Gamma} \sum_{F \in \mathcal{F}_h^\gamma} \int_F \{ \bm \sigma (\chi_h) \} \odot [\mathcal{I}_h u] + \sum_{\gamma \in \Gamma} \sum_{F \in \mathcal{F}_h^\gamma}  \frac{\eta_\gamma}{h_F} \int_F [\mathcal{I}_h u ] \odot [\chi_h].
\end{align}
Therefore, we obtain that 
\begin{align} \label{eq:err_low_0}
\|\chi_h\|^2_{\DG} & \lesssim  \mathcal{A}(u - \mathcal{I}_h u, \chi_h) +  \sum_{\gamma \in \Gamma} \sum_{F \in \mathcal{F}_h^\gamma} \int_F \{ \bm \sigma (\chi_h) \} \odot [\mathcal{I}_h u] - \sum_{\gamma \in \Gamma} \sum_{F \in \mathcal{F}_h^\gamma}  \frac{\eta_\gamma}{h_F} \int_F [\mathcal{I}_h u ] \odot [\chi_h]\\ 
& = T_1 + T_2 +T_3. \nonumber
\end{align}
We proceed by bounding the terms on the right hand side of \eqref{eq:err_low_0} separately. For the first term, we use \Cref{lemma:continuity_A} followed by approximation properties  of $\mathcal{I}_h$ \cite{scott1990finite,ciarlet2013analysis} and the fact that $\kappa$ is piecewise constant, thus implying that $\nabla \cdot \kappa (\nabla I_h u)$ vanishes on each element:
\begin{equation*}
     \mathcal{A}(u - \mathcal{I}_h u, \chi_h)  \leq \left( h^s (\sum_{i=1}^N |u|_{H^{1+s}(\Omega_i)}^2)^{1/2} + h \|f\|_{L^2(\mathcal{G})}\right)\|\chi_h\|_{\DG}. 
\end{equation*}

%
\Bk
Next we bound $T_2$ by using Cauchy-Schwarz's inequality and the fact that $u\in H_0^1(\mathcal{G})$:
\begin{align*}
T_2 =  \sum_{\gamma \in \Gamma} \sum_{F \in \mathcal{F}_h^\gamma} \frac{1}{\mathrm{card}(I_\gamma)} \int_F \sum_{(i,j)\in D_\gamma} \{ \bm \sigma (\chi_h) \}_{(i,j)} \, [\mathcal{I}_h u - u]_{(i,j)}\\
\lesssim \left(
\sum_{\gamma \in \Gamma} \sum_{F \in \mathcal{F}_h^\gamma} \sum_{(i,j)\in D_\gamma}
h_F \Vert  \{ \bm \sigma (\chi_h) \}_{(i,j)}
\Vert_{L^2(F)}^2
\right)^{1/2}
\left(
\sum_{\gamma \in \Gamma} \sum_{F \in \mathcal{F}_h^\gamma} \sum_{(i,j)\in D_\gamma}
\frac{1}{h_F} \Vert   [\mathcal{I}_h u - u]_{(i,j)}
\Vert_{L^2(F)}^2
\right)^{1/2}.
\end{align*}
Clearly by a discrete trace inequality, the first factor is bounded above by $\Vert \chi_h\Vert_{\DG}$. For the second factor, we expand the jump across an edge $F$ shared by two elements that belong to $\Omega_i$ and $\Omega_j$ and utilize the following trace inequality on the neighboring elements $K$:
\begin{align*}
\|\mathcal I_h u  - u \|_{L^2(F)} & \lesssim h_F^{-1/2} \|\mathcal I_h u  - u\|_{L^2(K)} + h_F^{s} |\mathcal I_h u  - u|_{H^{1/2+s}(K)} \\
& \lesssim  \nonumber h_F^{1/2 + s} |u|_{H^{1+s}(K)}. 
\end{align*}
We then have:
\[
T_2 \lesssim h^s (\sum_{i=1}^N |u|_{H^{1+s}(\Omega_i)}^2)^{1/2} \|\chi_h\|_{\DG}. 
\]
We follow a similar argument to obtain the same bound for $T_3$ and we skip the details for brevity.
\end{proof}

\section{Numerical examples}
\label{sec:numer}

In this section, we evaluate the error convergence of the method in \Cref{ex:1d} and \Cref{ex:2d} where the solution exhibits $H^2$ regularity, and in \Cref{ex:lr} where the solution has  $V^{1+s}$ regularity.
In \Cref{ex:fancy1d,ex:fancy2d} we
compare the DG scheme with discretization by continuous
Lagrange (CG) elements, where the continuity constraint at the bifurcation nodes is enforced
by construction of the finite element space and the Kirchoff law 
is enforced weakly. For further
details on CG finite element method on graphs, we refer to \cite{arioli2018finite}.
%

Our implementation relies on FEniCS and in particular its support of finite element
method on manifolds with different topological and geometric degree \cite{rognes2013automating}.
Since the jump and average operators currently available in FEniCS assume that each facet is shared by exactly two cells, the assembly of the bilinear form with the jump and average operators
\eqref{eq:jumpavg22d} is handled by our library FEniCS$_{\mathrm{ii}}$ \cite{kuchta2020assembly}. In following, we set $\eta_{F,i}=10 p$, $\eta_\gamma=10 p$ for examples with edge-networks while in case of plane-networks we take $\eta_{F,i}=20 p$, $\eta_\gamma=20 p$.

\subsection{Manufactured solution on edge-network}
\label{ex:1d}

We consider a test case inspired by \cite{masri2024discontinuous} where the Poisson
  problem is solved on a graph $\mathcal{G}$ embedded
  in $\mathbb{R}^2$ (see left panel in \Cref{fig:mesh1d}).  Specifically, we have eight boundary points with coordinates $(0, 0)$,
  $(-1.5, 3)$, $(-0.5, 3)$, $(0.5, 3)$, $(1.5, 3)$
  $(-1, 3)$, $(0.5, 2)$ and $(1.5, 2)$ and we have three bifurcation points
  $\gamma_{(1, 2, 3)}$, $\gamma_{(2, 4, 5, 8)}$ and $\gamma_{(3, 6, 7, 9, 10)}$ located at 
    $(0, 1)$, $(-1, 2)$ and $(1, 2)$, respectively.
  Setting $\kappa=1$ the exact solution is
  given as
  \begin{equation}\label{eq:network_hdg_mms}
    u(x,y) = \begin{cases}
      y + \cos (2\pi y)  & \!\!(x, y)\in \Omega_1,\\
      2 + \frac{1}{2}\sqrt{2}(y-1)  & \!\!(x, y) \in \Omega_{2\leq i \leq 3},\\
      1 + \frac{1}{2}\sqrt{2} + \frac{1}{8}\sqrt{1.25}(y-2)+ \cos (2\pi y) & \!\!(x, y)\in \Omega_{4\leq i \leq 7},\\
      2 + \frac{1}{2}\sqrt{2} & \!\!(x, y)\in \Omega_{8\leq i \leq 10}     
    \end{cases}.
  \end{equation}    

  \Cref{fig:mms1d} contains the plots of the errors in the DG norm and $L^2$ norm as a function of the mesh size for the three variants of the DG method. We note that with the IPDG and NIPG variants, we apply the over-penalization $1/h_F^2$ instead of $1/h_F$,
  see e.g. \cite[Ch 2.8.2]{riviere2008discontinuous} to recover optimal rates in the $L^2$ norm. Without over-penalization, the rates are optimal in the DG norm, and suboptimal in the $L^2$ norm. 
  For each variant, the DG solutions are obtained for polynomial degree equal to $1, 2$ and $3$.  We
  observe that the scheme 
  yields $p$-order convergence in the DG norm and order $p+1$ convergence in the $L^2$ norm.


  \begin{figure}
    \centering
    \includegraphics[width=0.9\textwidth]{./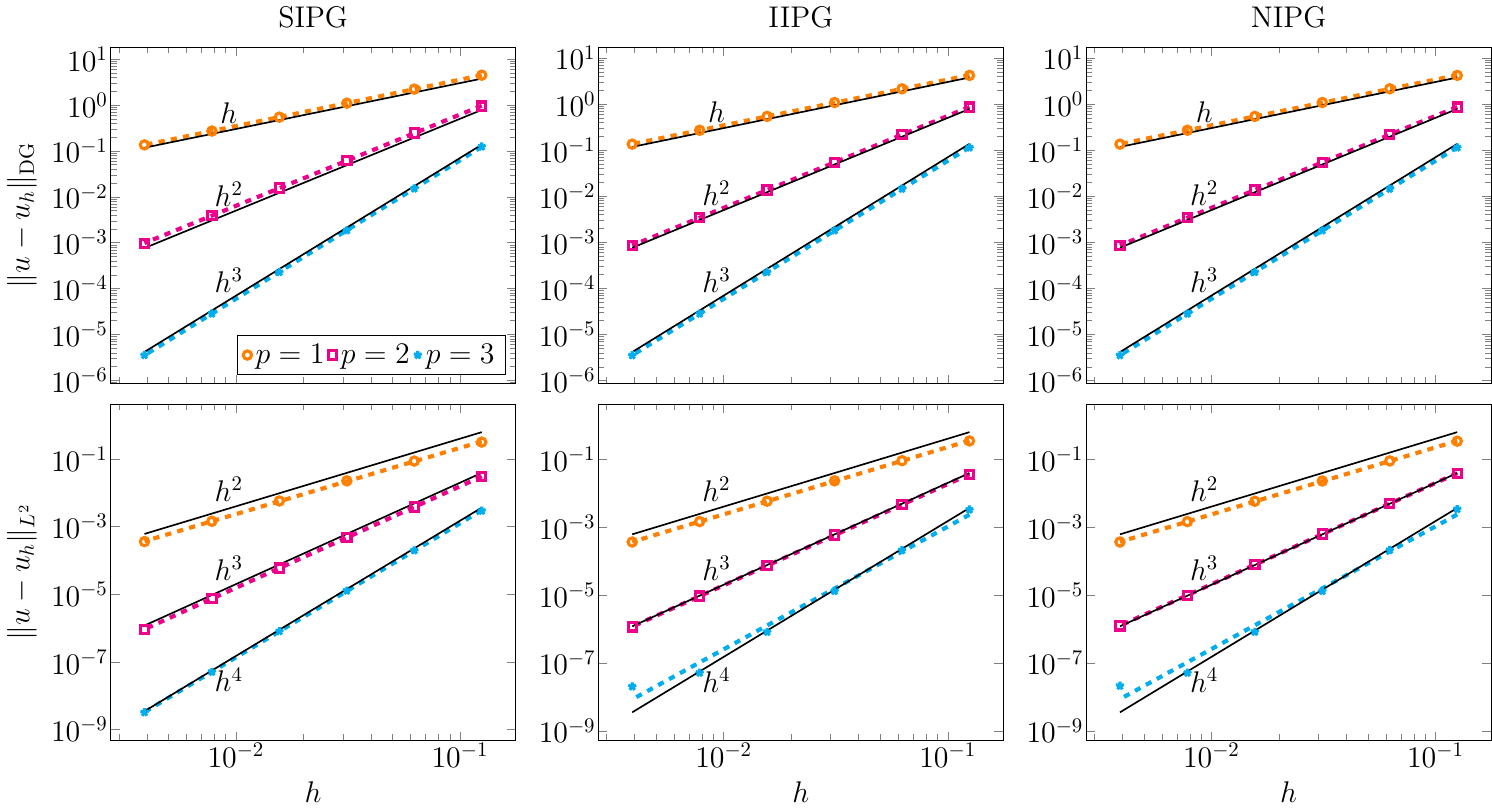}
    \caption{Poisson problem with exact solution \eqref{eq:network_hdg_mms} on the edge-network geometry in the left panel of \Cref{fig:mesh1d}.
    Top row displays the DG errors as a function of mesh size and bottom row displays the $L^2$ errors. 
      The SIPG, IIPG and NIPG variants are used with polynomial degrees $p = 1, 2, 3$.
    }
    \label{fig:mms1d}
  \end{figure}

\subsection{Manufactured solution on plane-network}
\label{ex:2d}
  We next apply the DG method to a network of planar surfaces. 
  To this end we extrude the geometry from \Cref{ex:1d} in the $z$-direction to obtain
  a collection of planar domains $\Omega_i$, $1\leq i \leq 10$, see the right panel of \Cref{fig:mesh1d}. The size of each domain in the $z$-direction is 1.
  We again consider the Poisson problem with $\kappa=1$ and the solution $u$, such that
  $u(x, y, z)=u(x, y) \sin(2\pi z)$ with $u(x, y)$ defined in \eqref{eq:network_hdg_mms}.
  We run similar experiments as in the previous example, and compute the errors in the DG norm and $L^2$ norm for the three variants of the DG method and three choices of polynomial degree. 
  With this setup \Cref{fig:mms2d} shows that our DG method converges optimally for $p=1, 2, 3$.  As in \Cref{ex:1d}, the IIPG and NIPG schemes used over-penalization.


  \begin{figure}  
    \includegraphics[width=0.9\textwidth]{./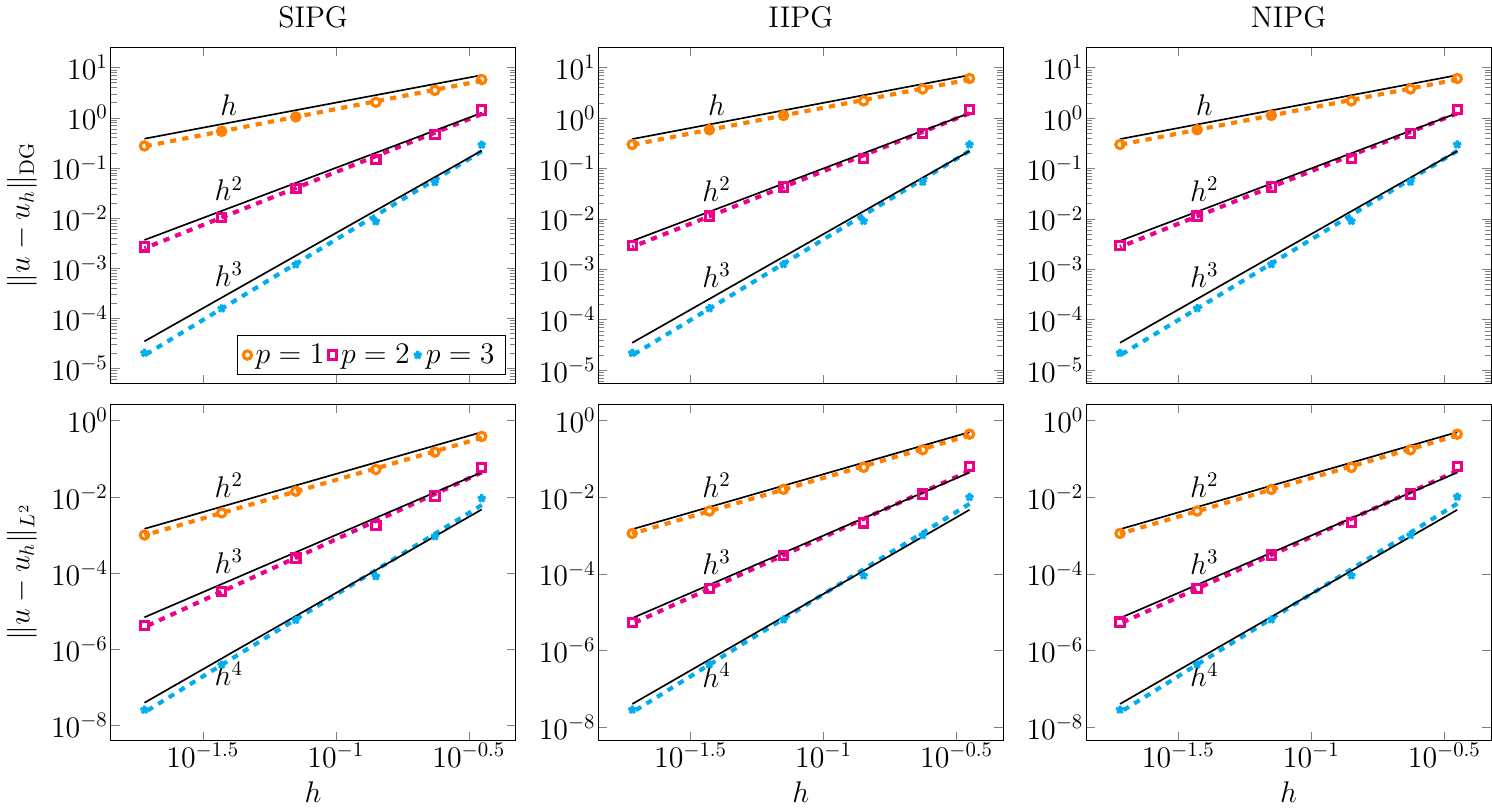}
    \caption{Poisson problem  on the plane-network geometry in the right panel of \Cref{fig:mesh1d}.
    Top row displays the DG errors as a function of mesh size and bottom row displays the $L^2$ errors. 
      The SIPG, IIPG and NIPG variants are used with polynomial degrees $p = 1, 2, 3$.
    }    
    \label{fig:mms2d}
    \end{figure}

\subsection{Manufactured solution with low regularity}
\label{ex:lr}
In this example, the plane-network $\mathcal{G}$ consists of an L-shaped domain $\Omega_1$ and
  a pair of rectangular domains (see \Cref{fig:mms_lr}).
  \[
  \begin{aligned}
    \Omega_1&=\left\{(x, y, z)\in\mathbb{R}^3, (x, y)\in(-1, 1)^2\setminus(-1, 0)^2, z=0\right\},\\
    \Omega_2&=\left\{(x, y, z)\in\mathbb{R}^3, (y, z)\in(0, 1)\times(-1, 1), x=-1\right\},\\
    \Omega_3&=\left\{(x, y, z)\in\mathbb{R}^3, (x, z)\in(0, 1)\times(-1, 1), y=-1\right\}.
  \end{aligned}
  \]
  We fix $0<s<1$ and define the exact solution $u$ by using
  the polar coordinates $(r,\theta)$ in the $xy$-plane:
  \begin{equation}\label{eq:network_low}
    u(r,\theta,z) = \begin{cases}
      r^{s}\sin(s\theta)  & \!\!\mbox{in}\, \Omega_1,\\
      r^{s}\sin(s\theta) + z  & \!\!\mbox{in}\, \Omega_2,\\
      r^{s}\sin(s\theta) + z^2  & \!\!\mbox{in}\, \Omega_3
    \end{cases}.
  \end{equation}      
  We note that $u\in V^{1+s}$. Let us also remark that at the bifurcation segments $u$ satisfies the continuity condition
  \eqref{eq:coupled_interfaceB} and a generalized Kirchoff's law, namely
  $\sum_{i\in I_\gamma} \bfsigma_i(u_i)|_\gamma\cdot \bfn_i = g_\gamma$
  with $g_\gamma\neq 0$.
  This non-zero source is accounted for by adding to the right-hand-side in \eqref{eq:scheme} the
  term term $\sum_{\gamma\in\Gamma} \sum_{i\in I_\gamma} \sum_{F\in\mathcal{F}_h^\gamma}\int_F g_\gamma \avg{v_h}_{(i, j)}$.

\Cref{fig:mms_lr} (left panel) displays the numerical solution with $p=1$ and the SIPG variant whereas the right panel shows the convergence of the scheme for
  different values of $s=0.25, 0.5, 0.75$. In agreement with \eqref{eq:lr_estim}
  in \Cref{thm:lr_estim} the errors in the DG-norm decay as $h^s$ in all three cases.
  \begin{figure}
    \centering
    \includegraphics[height=0.34\textwidth]{./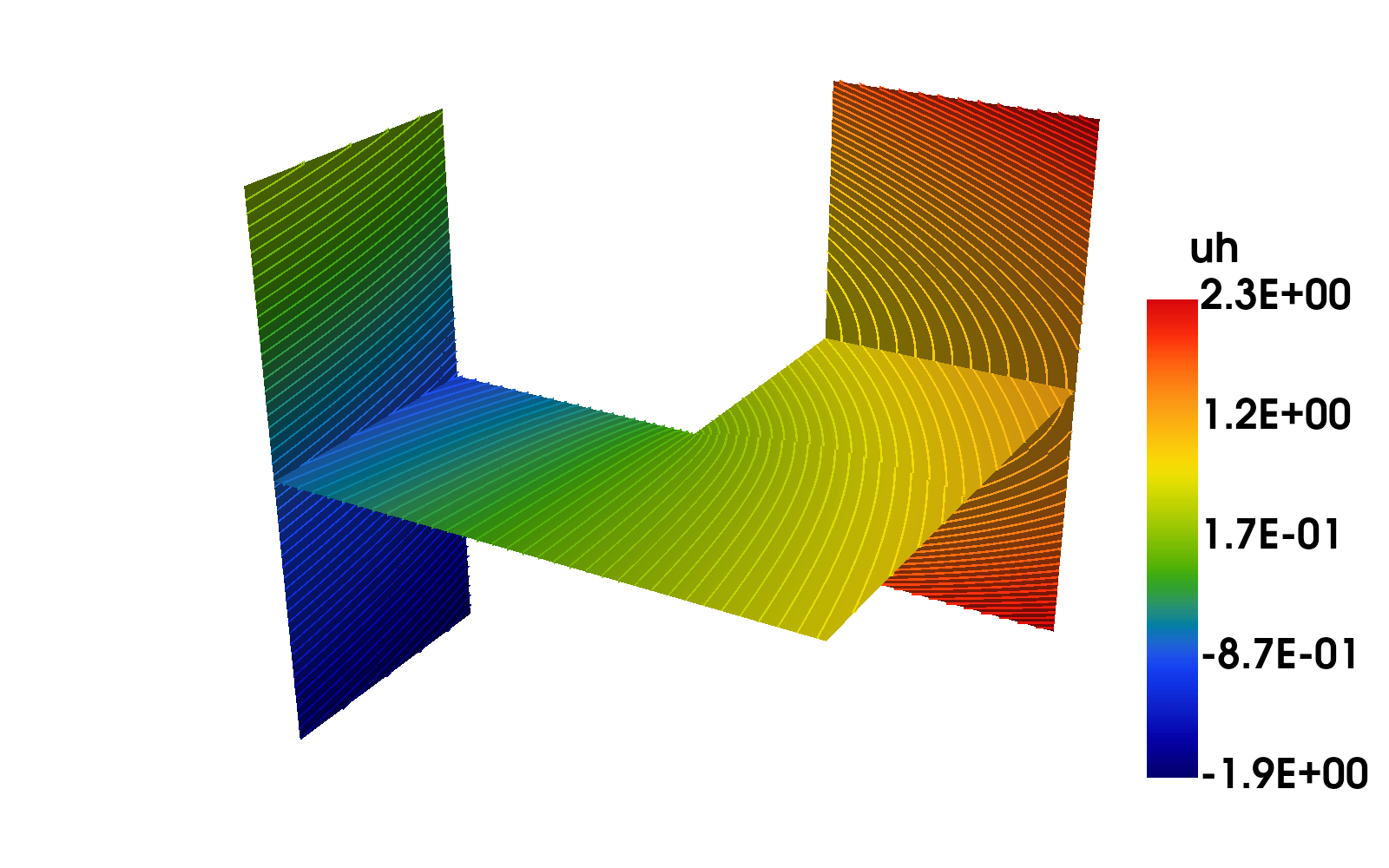}
    \includegraphics[height=0.34\textwidth]{./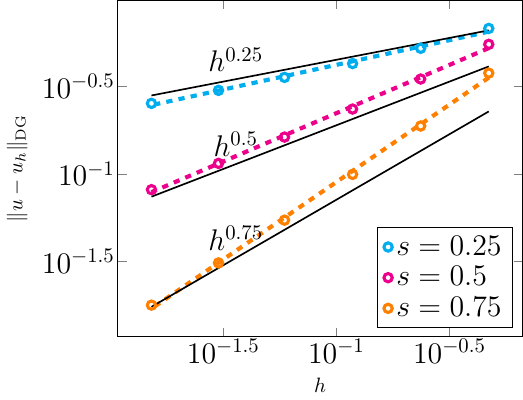}    
    \caption{
      Left: Numerical solution with the SIPG variant and $p=1$ for the  
      low regularity $V^{1+s}$ solution \eqref{eq:network_low}. Right: Errors in the DG norm as a function of mesh size for different values of $s$.}    
    \label{fig:mms_lr}
    \end{figure}

\subsection{Vascular edge-network}
\label{ex:fancy1d}

The edge-network $\mathcal{G}$ is part of the vascular network of a mouse cortex as
  given in the dataset~\cite{goirand2021network}. This network includes $57$  bifurcation nodes and there are at most four edges connected to a node. Given the geometry we solve the Poisson problem with $\kappa=1$ and
  with source term $f=1$ and homogeneous Dirichlet boundary condition.

  To verify convergence of the DG scheme, we compare the solution
  $u_h$ against a reference CG solution, $u^{*}$, which is obtained on the finest
  mesh. The SIPG variant is selected with $p=1$. Comparing the solutions in the $L^2$-norm
  we observe in \Cref{tab:fancy1d} that their difference converges with
  refinement. 
  
  We note that in this example the discrete solutions were obtained by preconditioned conjugate gradient solver 
  using as the preconditioner the AMG approximation of the Riesz map with respect to the inner product inducing the 
  norm \eqref{eq:dgnorm1d}. For all meshes, the solver converged in at most $15$ iterations. 
 
  
\begin{figure}[H]
    \begin{minipage}{0.45\textwidth}
    \centering
      \includegraphics[width=0.8\textwidth]{./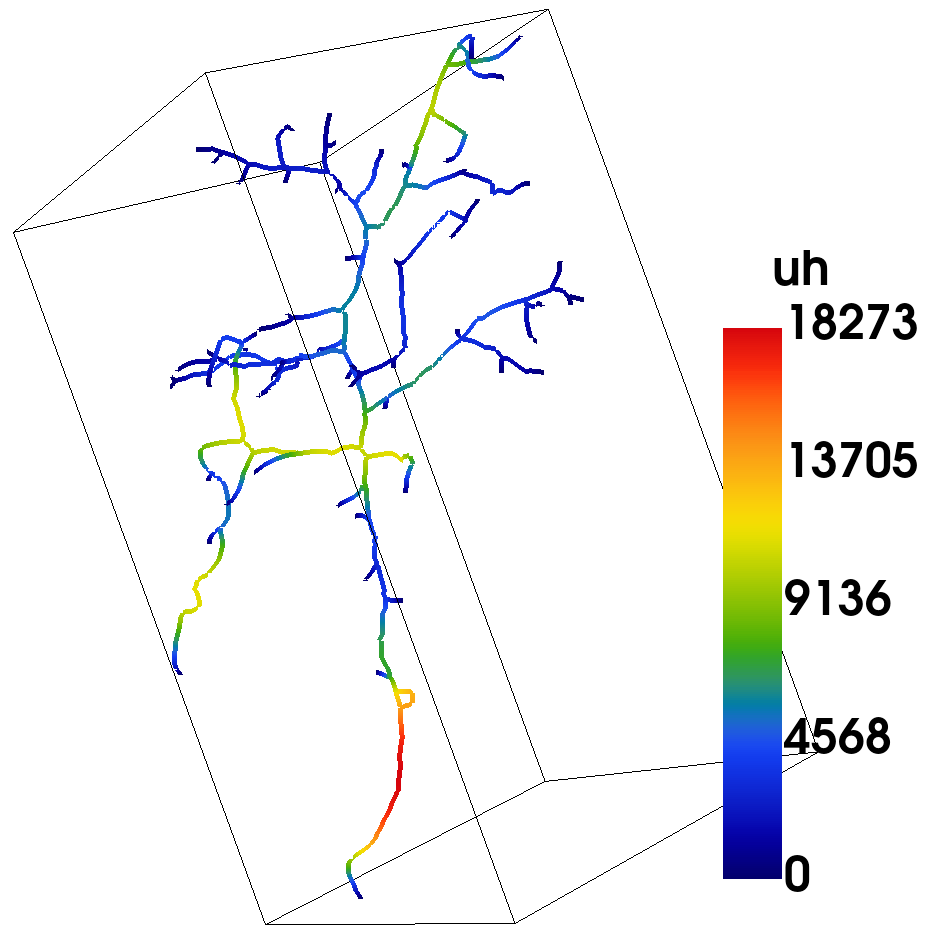}
    \end{minipage}
    \begin{minipage}{0.54\textwidth}
\hspace{0.5cm}      
\centering
\begin{tabular}{ccc}
\hline
$h$ &  $\mathrm{dim}(V_h^1)$  & $\frac{\lVert u^{*} - u_h\rVert_{L^2(\mathcal{G})}}{\lVert u^{*}\rVert_{L^2(\mathcal{G})}}$ \\
\hline
 4.30E-01 &  22392  & 5.51E-06 \\
 2.15E-01 &  44784  & 1.37E-06 \\
 1.08E-01 &  89568  & 3.28E-07 \\
 5.38E-02 & 179136  & 6.84E-08 \\
 2.69E-02 & 358272  & 2.81E-10 \\
 \hline
\end{tabular}
    \end{minipage}
    \caption{
      Poisson problem on vascular network from \cite{goirand2021network} solved by
      the DG scheme. Left panel: problem geometry and numerical solution. Right panel: relative $L^2$ errors with
      respect to the reference CG solution $u^{*}$.
    }
    \label{tab:fancy1d}
\end{figure}

\subsection{Complex plane-network}\label{ex:fancy2d}

  We obtain the plane-network $\mathcal{G}$ by subdividing a unit cube in $27$ cubes of size $1/3$,
  taking union of their respective boundaries and removing the parts that intersect with the boundary of the unit cube, see \Cref{tab:fancy2d}. The domain includes $36$ bifurcation segments, each connecting
  four planar surfaces. 

As in the previous example, we solve the Poisson problem with $\kappa=1$ and with source term $f=1$ and homogeneous Dirichlet boundary condition and we use the CG solution on the finest mesh as reference solution.  The SIPG variant is chosen with $p=1$. The left panel of \Cref{tab:fancy2d} displays the numerical solution and the right panel shows the convergence of the method as the mesh size decreases. 
Using AMG approximation of the Riesz map due to \eqref{eq:dgnorm2d} as the preconditioner for the conjugate gradient method the solutions 
were obtained after $24$ iterations on the coarsest mesh and $25$ iterations on all the finer meshes.

  
\begin{figure}[H]
    \begin{minipage}{0.40\textwidth}
    \centering
      \includegraphics[width=1.0\textwidth]{./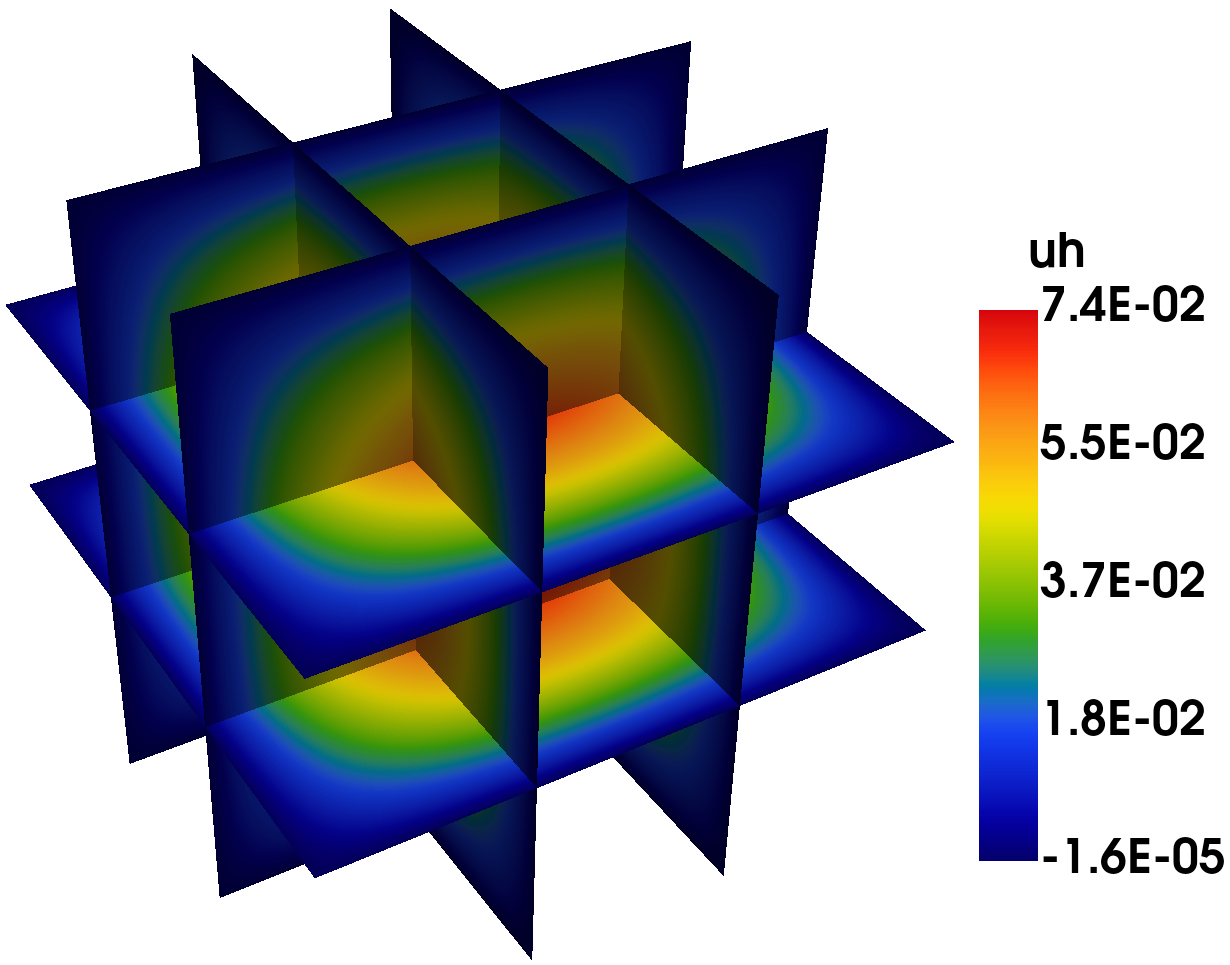}
    \end{minipage}
    \begin{minipage}{0.58\textwidth}
    \centering
\hspace{0.5cm}
\begin{tabular}{ccc}
\hline
$h$ &  $\mathrm{dim}(V_h^1)$ &   $\frac{\lVert u^{*} - u_h\rVert_{L^2}}{\lVert u^{*}\rVert_{L^2}}$ \\
\hline
 2.36E-01 & 1296  &   6.46E-02  \\
 1.18E-01 & 5184  &  1.69E-02  \\
 5.89E-02 & 20736 &  4.21E-03  \\
 2.95E-02 & 82944 &  9.82E-04  \\
 1.52E-02 & 311364&  5.82E-05  \\
\hline
\end{tabular}
    \end{minipage}
    \caption{
      Poisson problem on a plane-network solved by
      the DG scheme with $p=1$.  Left panel: problem geometry and numerical solution. Right panel: relative $L^2$ error with
      respect to the reference CG solution $u^{*}$.
    }
    \label{tab:fancy2d}
  \end{figure}  


\section{Conclusions}

This work addresses the numerical analysis of discontinuous polynomial approximations of elliptic problems on hypergraphs made of either segments or of planar surfaces. Using a manipulation of the Kirchoff conditions and a new definition of averages and jumps, the terms handling the bifurcation nodes are seamlessly written in the DG form. A priori error bounds are derived; they require technical functional analysis tools for the low regularity case. Convergence is verified for several numerical examples. This work is motivated by the mixed dimensional problem of PDEs valid in three-dimensional domains that are coupled to PDEs valid on hypergraphs; the numerical analysis of these problems is future work.

\bibliographystyle{amsplain}
\bibliography{references}

@article{sandve2012efficient,
  title={An efficient multi-point flux approximation method for discrete fracture--matrix simulations},
  author={Sandve, Tor Harald and Berre, Inga and Nordbotten, Jan M},
  journal={Journal of Computational Physics},
  volume={231},
  number={9},
  pages={3784--3800},
  year={2012},
  publisher={Elsevier}
}

@article{antonietti2016mimetic,
  title={Mimetic finite difference approximation of flows in fractured porous media},
  author={Antonietti, Paola F and Formaggia, Luca and Scotti, Anna and Verani, Marco and Verzott, Nicola},
  journal={ESAIM: Mathematical Modelling and Numerical Analysis},
  volume={50},
  number={3},
  pages={809--832},
  year={2016},
  publisher={EDP Sciences}
}

@article{boon2018robust,
  title={Robust discretization of flow in fractured porous media},
  author={Boon, Wietse M and Nordbotten, Jan M and Yotov, Ivan},
  journal={SIAM Journal on Numerical Analysis},
  volume={56},
  number={4},
  pages={2203--2233},
  year={2018},
  publisher={SIAM}
}

@article{causemann2025silico,
  title={In-silico molecular enrichment and clearance of the human intracranial space},
  author={Causemann, Marius and Kuchta, Miroslav and Masri, Rami and Rognes, Marie E},
  journal={bioRxiv},
  pages={2025--01},
  year={2025},
  publisher={Cold Spring Harbor Laboratory}
}

@article{masri2024modelling,
  title={The modelling error in multi-dimensional time-dependent solute transport models},
  author={Masri, Rami and Zeinhofer, Marius and Kuchta, Miroslav and Rognes, Marie E},
  journal={ESAIM: Mathematical Modelling and Numerical Analysis},
  volume={58},
  number={5},
  pages={1681--1724},
  year={2024},
  publisher={EDP Sciences}
}

@article{nordbotten2019unified,
  title={Unified approach to discretization of flow in fractured porous media},
  author={Nordbotten, Jan Martin and Boon, Wietse M and Fumagalli, Alessio and Keilegavlen, Eirik},
  journal={Computational Geosciences},
  volume={23},
  number={2},
  pages={225--237},
  year={2019},
  publisher={Springer}
}

@inproceedings{kuchta2020assembly,
  title={Assembly of multiscale linear {PDE} operators},
  author={Kuchta, Miroslav},
  booktitle={Numerical Mathematics and Advanced Applications ENUMATH 2019: European Conference, Egmond aan Zee, The Netherlands, September 30-October 4},
  pages={641--650},
  year={2020},
  organization={Springer}
}

@article{amare20251d,
  title={{1D} thermoembolization model using {CT} imaging data for porcine liver},
  author={Amare, Rohan and Stolley, Danielle and Parrish, Steve and Jacobsen, Megan and Layman, Rick R and Santos, Chimamanda and Riviere, Beatrice and Fowlkes, Natalie and Fuentes, David and Cressman, Erik},
  journal={Scientific Reports},
  volume={15},
  number={1},
  pages={20552},
  year={2025}
}

@article{laurinozunino2019,
  title={Derivation and analysis of coupled {PDE}s on manifolds with high dimensionality gap arising from topological model reduction},
  author={Laurino, Federica and Zunino, Paolo},
  journal={ESAIM: Mathematical Modelling and Numerical Analysis},
  volume={53},
  number={6},
  pages={2047--2080},
  year={2019}
}

@article{berre2019flow,
  title={Flow in fractured porous media: {A} review of conceptual models and discretization approaches},
  author={Berre, Inga and Doster, Florian and Keilegavlen, Eirik},
  journal={Transport in Porous Media},
  volume={130},
  number={1},
  pages={215--236},
  year={2019}
}

@article{burman2017cut,
  title={A cut discontinuous {G}alerkin method for the {L}aplace--{B}eltrami operator},
  author={Burman, Erik and Hansbo, Peter and Larson, Mats G and Massing, Andr{\'e}},
  journal={IMA Journal of Numerical Analysis},
  volume={37},
  number={1},
  pages={138--169},
  year={2017},
  publisher={Oxford University Press}
}

@article{guzman2018analysis,
  title={Analysis of the finite element method for the {L}aplace--{B}eltrami equation on surfaces with regions of high curvature using graded meshes},
  author={Guzman, Johnny and Madureira, Alexandre and Sarkis, Marcus and Walker, Shawn},
  journal={Journal of Scientific Computing},
  volume={77},
  number={3},
  pages={1736--1761},
  year={2018},
  publisher={Springer}
}

@article{dedner2013analysis,
  title={Analysis of the discontinuous {G}alerkin method for elliptic problems on surfaces},
  author={Dedner, Andreas and Madhavan, Pravin and Stinner, Bj{\"o}rn},
  journal={IMA Journal of Numerical Analysis},
  volume={33},
  number={3},
  pages={952--973},
  year={2013},
  publisher={OUP}
}

@article{fjordholm2022well,
  title={Well-posedness and convergence of a finite volume method for conservation laws on networks},
  author={Fjordholm, Ulrik S and Musch, Markus and Risebro, Nils H},
  journal={SIAM Journal on Numerical Analysis},
  volume={60},
  number={2},
  pages={606--630},
  year={2022}
}

@article{CockburnDemlow2016,
  title={Hybridizable discontinuous {G}alerkin and mixed finite element methods for elliptic problems on surfaces},
  author={Cockburn, Bernardo and Demlow, Alan},
  journal={Mathematics of Computation},
  volume={85},
  number={302},
  pages={2609--2638},
  year={2016}
}

@incollection{Dziuk2006,
  title={Finite elements for the {B}eltrami operator on arbitrary surfaces},
  author={Dziuk, Gerhard},
  booktitle={Partial differential equations and calculus of variations},
  pages={142--155},
  year={2006},
  OPTpublisher={Springer}
}

@article{Knobloch2025,
  title={On positivity preservation of hybrid discontinuous {G}alerkin methods on hypergraphs},
  author={Knobloch, P. and Lederer, P. L. and Rubb, A.},
  journal={arxiv},
  volume={2502.07976v2},
  OPTnumber={2},
  OPTpages={505--528},
  year={2025}
}

@article{rupp2022partial,
  title={Partial differential equations on hypergraphs and networks of surfaces: {D}erivation and hybrid discretizations},
  author={Rupp, Andreas and Gahn, Markus and Kanschat, Guido},
  journal={ESAIM: Mathematical Modelling and Numerical Analysis},
  volume={56},
  number={2},
  pages={505--528},
  year={2022},
  OPTpublisher={EDP Sciences}
}

@article{ciarlet2013analysis,
  title={Analysis of the {S}cott--{Z}hang interpolation in the fractional order {S}obolev spaces},
  author={Ciarlet, Patrick},
  journal={Journal of Numerical Mathematics},
  volume={21},
  number={3},
  pages={173--180},
  year={2013},
  publisher={De Gruyter}
}

@article{doi:10.1137/S0036142902405217,
author = {Karakashian, Ohannes A. and Pascal, Frederic},
title = {A Posteriori Error Estimates for a Discontinuous {G}alerkin Approximation of Second-Order Elliptic Problems},
journal = {SIAM Journal on Numerical Analysis},
volume = {41},
number = {6},
pages = {2374-2399},
year = {2003},
doi = {10.1137/S0036142902405217}
}

@article{scott1990finite,
  title={Finite element interpolation of nonsmooth functions satisfying boundary conditions},
  author={Scott, L Ridgway and Zhang, Shangyou},
  journal={Mathematics of Computation},
  volume={54},
  number={190},
  pages={483--493},
  year={1990}
}

@article{schoberl2008,
title={A posteriori error estimates for {M}axwell equations},
author={Sch\"oberl, J.},
journal={Mathematics of Computation},
volume={77},
pages={633--649},
year={2008}
}

@article{ern2017finite,
  title={Finite element quasi-interpolation and best approximation},
  author={Ern, Alexandre and Guermond, Jean-Luc},
  journal={ESAIM: Mathematical Modelling and Numerical Analysis},
  volume={51},
  number={4},
  pages={1367--1385},
  year={2017}
}

@article{ern2016mollification,
  title={Mollification in strongly {L}ipschitz domains with application to continuous and discrete de {R}ham complexes},
  author={Ern, Alexandre and Guermond, Jean-Luc},
  journal={Computational Methods in Applied Mathematics},
  volume={16},
  number={1},
  pages={51--75},
  year={2016},
  publisher={De Gruyter}
}

@article{ern2021quasi,
  title={Quasi-optimal Nonconforming Approximation of Elliptic {PDE}s with Contrasted Coefficients and {$H^{1+r}$}, {$r > 0$}, Regularity},
  author={Ern, Alexandre and Guermond, Jean-Luc},
  journal={Foundations of Computational Mathematics},
  volume={22},
  number={},
  pages={1273--1308},
  year={2021},
  doi={10.1007/s10208-021-09527-7},
  publisher={Springer}
}

@Book{Ern:booki,
	author        = {A. Ern and J. L. Guermond},
	title         = {Finite {E}lements {I}},
	subtitle      = {{A}pproximation and {I}nterpolation},
	publisher     = {Springer International Publishing},
	year          = {2021},
	series        = {Texts in {A}pplied {M}athematics},
	doi           = {10.1007/978-3-030-56341-7}
}

@incollection{bonito2020finite,
  title={Finite element methods for the {L}aplace--{B}eltrami operator},
  author={Bonito, Andrea and Demlow, Alan and Nochetto, Ricardo H},
  booktitle={Handbook of Numerical Analysis},
  volume={21},
  pages={1--103},
  year={2020},
  publisher={Elsevier}
}

@article{arioli2018finite,
  title={A finite element method for quantum graphs},
  author={Arioli, Mario and Benzi, Michele},
  journal={IMA Journal of Numerical Analysis},
  volume={38},
  number={3},
  pages={1119--1163},
  year={2018},
  publisher={Oxford University Press}
}

@article{goirand2021network,
  title={Network-driven anomalous transport is a fundamental component of brain microvascular dysfunction},
  author={Goirand, Florian and Le Borgne, Tanguy and Lorthois, Sylvie},
  journal={Nature communications},
  volume={12},
  number={1},
  pages={7295},
  year={2021},
  publisher={Nature Publishing Group UK London}
}

@article{hansbo2017nitsche,
  title={A {N}itsche method for elliptic problems on composite surfaces},
  author={Hansbo, Peter and Jonsson, Tobias and Larson, Mats G and Larsson, Karl},
  journal={Computer Methods in Applied Mechanics and Engineering},
  volume={326},
  pages={505--525},
  year={2017}
}

@article{brenner2017gradient,
  title={Gradient discretization of hybrid-dimensional {D}arcy flow in fractured porous media with discontinuous pressures at matrix--fracture interfaces},
  author={Brenner, Konstantin and Hennicker, Julian and Masson, Roland and Samier, Pierre},
  journal={IMA Journal of Numerical Analysis},
  volume={37},
  number={3},
  pages={1551--1585},
  year={2017},
  publisher={Oxford University Press}
}

@article{antonietti2022polytopic,
  title={Polytopic discontinuous {G}alerkin methods for the numerical modelling of flow in porous media with networks of intersecting fractures},
  author={Antonietti, Paola F and Facciol{\`a}, Chiara and Verani, Marco},
  journal={Computers \& Mathematics with Applications},
  volume={116},
  pages={116--139},
  year={2022}
}

@article{antonietti2019discontinuous,
  title={Discontinuous {G}alerkin approximation of flows in fractured porous media on polytopic grids},
  author={Antonietti, Paola F and Facciola, Chiara and Russo, Alessandro and Verani, Marco},
  journal={SIAM Journal on Scientific Computing},
  volume={41},
  number={1},
  pages={A109--A138},
  year={2019}
}

@article{rognes2013automating,
  title={Automating the solution of {PDEs} on the sphere and other manifolds in {FEniCS} 1.2},
  author={Rognes, Marie E and Ham, David A and Cotter, Colin J and McRae, Andrew TT},
  journal={Geoscientific Model Development},
  volume={6},
  number={6},
  pages={2099--2119},
  year={2013},
  publisher={Copernicus Publications G{\"o}ttingen, Germany}
}

@book{riviere2008discontinuous,
  title={Discontinuous {G}alerkin methods for solving elliptic and parabolic equations: theory and implementation},
  author={Riviere, Beatrice},
  year={2008},
  publisher={SIAM}
}

@article{masri2024discontinuous,
  title={Discontinuous {G}alerkin Methods for {3D}--{1D} Systems},
  author={Masri, Rami and Kuchta, Miroslav and Riviere, Beatrice},
  journal={SIAM Journal on Numerical Analysis},
  volume={62},
  number={4},
  pages={1814--1843},
  year={2024},
  publisher={SIAM}
}

\end{document}